\documentclass[12pt]{amsart}
\usepackage{graphicx} 
\usepackage[utf8]{inputenc}
\usepackage{geometry}
\usepackage{amssymb}
\usepackage{indentfirst,amscd}
\usepackage{amsthm, amsfonts, empheq, amsmath}
\usepackage{mathrsfs}
\usepackage{mathtools}
\usepackage{enumitem}
\usepackage{tikz-cd} 
\usepackage{adjustbox}
\usepackage[style=alphabetic, giveninits=true, maxbibnames=99]{biblatex}
\renewbibmacro{in:}{}
\usepackage{thmtools}
\usepackage{thm-restate}
\usepackage{hyperref}
\usepackage{faktor}

\makeatletter
\newcommand*\bigcdot{\mathpalette\bigcdot@{.55}}
\newcommand*\bigcdot@[2]{\mathbin{\vcenter{\hbox{\scalebox{#2}{$\m@th#1 \bullet$}}}}}
\makeatother

\theoremstyle{definition}
\declaretheorem[name=Definition, numberwithin=section]{Def}
\newtheorem*{Claim}{Claim}

\theoremstyle{plain}
\declaretheorem[name=Theorem, sibling=Def]{Theo}
\declaretheorem[name=Lemma, sibling=Def]{Lemma}
\declaretheorem[name=Proposition, sibling=Def]{Prop}
\declaretheorem[name=Corollary, sibling=Def]{Cor}

\theoremstyle{remark}
\declaretheorem[name=Remark, sibling=Def]{Rem}

\newcommand{\g}{\mathfrak{g}}
\newcommand{\h}{\mathfrak{h}}

\DeclareMathOperator{\Alb}{Alb}

\DeclareMathOperator{\Tr}{Tr}

\DeclareMathOperator{\Ric}{Ric}

\DeclareMathOperator{\Id}{Id}
\DeclareMathOperator{\Iso}{Iso}

\DeclareMathOperator{\End}{End}
\DeclareMathOperator{\rk}{rk}
\DeclareMathOperator{\Hom}{Hom}
\DeclareMathOperator{\Span}{span}
\DeclareMathOperator{\codim}{codim}

\newcommand{\X}{\mathcal{X}}
\newcommand{\J}{\bar{J}}
\newcommand{\A}{\mathcal{A}}
\newcommand{\HH}{\mathcal{H}}
\newcommand{\Lie}{\mathscr{L}}
\newcommand{\F}{\mathcal{F}}
\newcommand{\C}{\mathbb{C}}
\newcommand{\Z}{\mathbb{Z}}
\newcommand{\R}{\mathbb{R}}

\title[On the Classification of V. Manifolds with $c_{1, B} = 0$ and large $b_1$]{On the Classification of Vaisman Manifolds with Vanishing First Basic Chern Class and Large First Betti Number}
\author[Lucas Gomes]{Lucas H. S. Gomes}
\address{Department of Mathematics, Graduate School of Science, The University of Osaka, Osaka, Japan}
\email{u788855c@ecs.osaka-u.ac.jp}
\begin{document}
	\begin{abstract}
		We show that every Vaisman manifold with large first Betti number and vanishing first basic Chern class is diffeomorphic to a Kodaira-Thurston manifold. Furthermore, its complex structure is left-invariant, the characteristic foliation is regular, and the associated fibration is given by the Albanese map. Under the additional assumption that the LCK rank is $1$, the Vaisman structure is also left-invariant. We further prove that if all basic harmonic $1$-forms have constant length, then the Vaisman manifold with large first Betti number is diffeomorphic to a Kodaira-Thurston manifold and its complex structure is the standard complex structure. Finally,  we discuss the relationship of this condition with transverse geometric formality in this setting.
	\end{abstract}

	\maketitle
	\tableofcontents
   	\section{Introduction}
	
A Locally Conformally K\"ahler manifold is a Hermitian manifold $(M, J, \omega)$ together with a closed $1$-form $\theta$ such that $d\omega = \theta \wedge \omega$. When $M$ is compact and $\theta$ is parallel, we call $M$ a Vaisman manifold. They posses a natural foliation structure with a transverse K\"ahler metric. In K\"ahler geometry the class of Calabi-Yau manifolds is defined as the K\"ahler manifolds with vanishing first Chern class $c_1$. It is an important class of manifolds both for mathematics and mathematical physics, playing an essential part for string theory, mirror symmetry and other applications. However, for non-K\"ahler geometry such as LCK metrics, the condition $c_1(M) = 0$ might not be as useful and one might use more suitable cohomology theories to obtain a more close analogous results from K\"ahler geometry. For example, Istrati studied Vaisman manifolds with vanishing first Chern-Bott class $c_1^{BC}$ in \cite{istrati2023vaismanmanifoldsvanishingchern} obtaining a Vaisman version of the Beauville-Bogomolov decomposition theorem. In \cite{barbaro2025calabiyaulocallyconformallykahler} the authors showed that left-invariant LCK metrics with $c_1^{BC} = 0$ are Vaisman in the solvmanifold setting. In this work, we concern ourselves with the transverse version of this condition. The natural transverse K\"ahler metric of a Vaisman manifold allows us to define the basic Chern classes $c_{i, B}$, which are introduced in the preliminaries section. When the foliation is regular and quasi-regular, they descend naturally to the Chern classes of the quotient space. With a further topological restriction we obtain the following classification theorem, which is the main theorem of this work.

\begin{Theo}[Theorem \ref{MainTheorem}]
	Let $(M^{2n+2}, J, \omega, \theta)$ be a Vaisman manifold with $b_1(M) = 2n+1$ and $c_{1, B}(M) = 0$. Then, $M$ is diffeomorphic to a Kodaira-Thurston manifold and $J$ is left-invariant. Moreover, the characteristic foliation $\Sigma$ is regular, $M$ is a fiber bundle over the Albanese torus
	$$ T^2 \longrightarrow M \xlongrightarrow{\alpha} \Alb (M)$$
	where $\alpha$ is the Albanese map, whose fibers are the leaves of $\Sigma$, which are biholomorphic to a complex torus $T^2$. 
\end{Theo}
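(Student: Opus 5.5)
The strategy is to exploit the structure theory of Vaisman manifolds: the basic cohomology of the characteristic foliation $\Sigma$ together with the transverse Kähler structure. Recall that for a compact Vaisman manifold $M^{2n+2}$ one has $b_1(M) = b_{1,B}(M) + 1$, where the extra class is $[\theta]$. It is also standard that the basic $1$-cohomology equals $H^1$ of the transverse Kähler structure, so $b_{1,B}$ is even. The hypothesis $b_1 = 2n+1$ then gives $b_{1,B} = 2n$, which is the complex transverse dimension $n$ doubled.

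The plan has four steps.

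\emph{Step 1: produce a flat parallel coframe from basic harmonic forms.} By basic Hodge theory, the $2n$ basic harmonic $1$-forms are real and imaginary parts of $n$ basic holomorphic $1$-forms $\varphi_1,\dots,\varphi_n$. Since $c_{1,B}=0$, the transverse Calabi--Yau theorem (El Kacimi-Alaoui) lets me deform the transverse Kähler form within its basic class to a transversally Ricci-flat one. This deformation stays within Vaisman structures via $\omega \mapsto \omega + dd^c_\theta$-type changes of the Sasakian/transverse part; I expect this to require some care.

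A transverse Bochner argument for a Ricci-flat transverse metric then shows the basic harmonic $1$-forms are transversally parallel. Hence $\varphi_1,\dots,\varphi_n$ are parallel for the transverse Levi-Civita connection. Their number equals the transverse dimension, so they are pointwise linearly independent, and the transverse metric is flat.

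\emph{Step 2: build a global coframe and its structure equations.} Add $\theta$ and $J\theta$ to get the global coframe $\{\theta, J\theta, \operatorname{Re}\varphi_i, \operatorname{Im}\varphi_i\}$. Here $\theta$ and the $\varphi_i$ are closed, and $d(J\theta) = -\omega_0$ is basic. Because $\omega_0$ is a transverse Kähler form parallel for the flat transverse connection, it has constant coefficients in the parallel coframe: $\omega_0 = \tfrac{i}{2}\sum_{j,k} a_{j\bar k}\,\varphi_j\wedge\bar\varphi_k$ with constant $a_{j\bar k}$. The dual frame therefore has constant structure constants, namely those of $\mathfrak{h}_{2n+1}\oplus\mathbb{R}$ (Heisenberg times $\mathbb{R}$). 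This gives a Lie algebra action that is simply transitive on the universal cover. The result is $M \cong \Gamma\backslash(H_{2n+1}\times\mathbb{R})$, a Kodaira--Thurston manifold. In the coframe $J$ has constant coefficients, so $J$ is left-invariant.

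\emph{Step 3: regularity of $\Sigma$ and identification of the fibration.} Integrate the closed forms $\theta$ and $\operatorname{Re}\varphi_i, \operatorname{Im}\varphi_i$ to obtain the Albanese map $\alpha\colon M\to \Alb(M)$, a $2n+1$-dimensional torus. However, in the statement $\Alb(M)$ should be taken as the torus from the $\varphi_i$ only, i.e. $H^0(\Omega^1)$, which has complex dimension $n$. I would use the holomorphic Albanese, for which $h^{1,0} = n$ on Vaisman manifolds. The differentials $\varphi_i$ are independent, so $\alpha$ is a holomorphic submersion whose fibers are tangent exactly to $\ker\varphi = \Sigma$. By Ehresmann's theorem it is a fiber bundle, and its compact connected fibers are the leaves. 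So $\Sigma$ is regular, and each leaf is a compact complex curve with a flat, nowhere-vanishing holomorphic vector field, i.e. a complex torus $T^2$. Connectedness of the fibers would follow from the surjectivity of $\pi_1$ onto the lattice, given that $b_1$ matches.

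\emph{Main obstacle.} The hardest point is Step 1. One must justify the transverse Ricci-flat deformation while remaining within the Vaisman class, or else avoid it by working with basic Ricci forms directly. A possible alternative is to use the basic version of the Bochner formula with the $dd^c$-lemma for basic forms, so as to show the basic Ricci potential does not obstruct parallelism.
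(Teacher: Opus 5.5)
\textbf{The gap: left-invariance of the original $J$.} As written, Step~2 fails. Your coframe uses the original $\theta$ and $J$, so $-d(J\theta)$ is the original $\omega_0$. But $\omega_0$ is \emph{not} parallel for the flat transverse connection of the Ricci-flat form $\omega_1=\omega_0-d\zeta$. It differs from $\omega_1$ by $d\zeta$, whose coefficients in the parallel coframe $\varphi_i,\bar\varphi_i$ are in general non-constant, so the dual frame does not have constant structure constants.

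To get constant structure constants you need a $1$-form $\eta$ with $d\eta=-\omega_1$. The real question is then which complex structure is constant in the resulting frame. With the deformation the paper uses (Proposition~\ref{VaismanDefProp}, Ornea--Slesar), $\theta,U,V,\bar J$ are fixed, but the complex structure becomes $J'=J-\zeta\otimes U-(\zeta\circ J)\otimes V$ as in (\ref{eqn:J}), and $\eta=J'\theta=\theta^c+\zeta$. Your Step~2 then proves that $M$ is Kodaira--Thurston and that $J'$ is left-invariant. The original $J=J'+\zeta\otimes U+(\zeta\circ J)\otimes V$, however, has coefficients $\zeta(X_j)$ in that frame, and these are not constant unless $\omega_0$ was already Ricci-flat.

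The paper closes exactly this gap with Theorem~\ref{LeftInvDef}. The path $J_t$ coming from $\zeta_t=t\zeta$ connects $J$ to $J'$, and Rollenske's theorem on deformations in the large then makes $J$ left-invariant. Your proposal has neither this step nor a substitute, and the obstacle you flag (staying Vaisman) is not where the difficulty lies.

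A substitute does exist, and it realises your ``$dd^c_\theta$-type'' hint while keeping $J$ fixed:
\begin{itemize}
\item Take $\theta'=\theta+df$ with $f$ basic, and set $\omega'=-dJ\theta'+\theta'\wedge J\theta'$.
\item One checks that $d\omega'=\theta'\wedge\omega'$ and that $\omega'$ is of type $(1,1)$. It is positive exactly when $\omega_0'=\omega_0-d(J\,df)$ is transversally positive.
\item One also checks $g'(U,\cdot)=\theta'$, and $U$ is still $g'$-Killing, so $\theta'$ is parallel.
\item The $\partial_B\bar\partial_B$ form of the transverse Calabi--Yau theorem lets you choose $f$ so that $\omega_0'$ is Ricci-flat.
\end{itemize}
Then your coframe $\{\theta',J\theta',\mathrm{Re}\,\varphi_i,\mathrm{Im}\,\varphi_i\}$ makes the original $J$ constant. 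This must be supplied, though; it is precisely what your Step~1 leaves open.

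\textbf{The remaining parts.} These are sound and in places more direct than the paper. The $\varphi_i$ are basic and $J'-J$ takes values in $T\Sigma$, so the holomorphic $1$-forms of $(M,J)$ and $(M,J')$ coincide. Transverse parallelism therefore already shows that the Albanese map of $(M,J)$ is a submersion with $\ker d\alpha=T\Sigma$. The paper instead gets pointwise independence of holomorphic forms from left-invariance of $J$ plus the Dolbeault version of Nomizu's theorem.

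Your fibre argument is also more direct. Ehresmann gives a bundle, connected fibres are compact leaves, and the leaves are tori because $U-iV$ is a nowhere-vanishing holomorphic vector field on each. This bypasses the paper's freeness argument and its appeal to Hasegawa/Benson--Gordon.

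The one imprecision is connectedness of the fibres. It holds not ``because $b_1$ matches'' but because $\alpha_*$ on $H_1$ is the period map $\varphi$. Since $\Alb(M)$ is the quotient by $\varphi(H_1(M))$, which Section~\ref{Sec:AlbTorus} shows is a lattice, $\pi_1(M)\to\pi_1(\Alb(M))$ is onto.
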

When $c_{1, B}(M) = 0$, $b_1(M) = 2n+1$ is the highest value we can impose on a Vaisman manifold (see \ref{Rem:MainTheo}).
The usual Beauville-Bogomolov decomposition theorem (\cite{Bogomolov,Beauville}) in K\"ahler geometry says that a Calabi-Yau manifold $M$ up to a finite covering splits as a product of a complex torus and a simply-connected manifold. If one imposes $b_1(M) = \dim_\R M$, then it follows that $M$ is biholomorphic to a complex torus. Thus, our main theorem is the analogous version of this result for Vaisman geometry. We emphasize that our theorem and its proof does not rely on the work of Istrati in \cite{istrati2023vaismanmanifoldsvanishingchern}. Compare with \cite[Corollary 5.11]{istrati2023vaismanmanifoldsvanishingchern}. In section 8 we further refine this above theorem for the case of Vaisman manifolds with LCK rank $1$. Under the same hypothesis, not only $J$ is left-invariant, but the entire structure is also left-invariant.

To prove the main theorem we take inspiration from \cite{NagyLengthofHarm}. There, the authors showed that if a Riemannian manifold $M$ has first Betti number $b_1(M) = \dim M - 1$ and all harmonic $1$-forms having constant length must be diffeomorphic to a $2$-step nilmanifold. They considered the Albanese map of $M$ defined over its harmonics $1$-forms to obtain a $S^1$-fibration of $M$ over its Albanese torus $\Alb(M)$. This is possible precisely because its harmonic $1$-forms have constant length. By deforming the metric through some modifications of the induced metric on the Albanese torus, they construct a global frame of $TM$ for which the usual Lie derivative defines a Lie bracket of a $2$-step nilpotent Lie algebra. We can further modify this argument to obtain a complex version of this result for Vaisman manifolds. With this goal in mind we need a version of the Albanese torus suitable for this setting. For a compact complex manifold $M$, its Albanese torus is defined through the holomorphic $1$-forms of $M$. It has a natural structure of a complex torus. When $M$ is K\"ahler it is well known that $\dim_\C \Alb(M)$ coincides with the Hodge number $h^{1,0}(M)$. In general, however, this is not valid. For Vaisman manfolds, Tsukada showed that every holomorphic $1$-forms is basic. We use this fact to prove in section 3 that $\dim_\C \Alb(M) = h_B^{1,0}(M)$ for the basic Hodge number $h_B^{1,0}$. With these tools in hand we obtain the following result, proved in section 5.
\begin{Theo}[Theorem \ref{Constant Length}]
	Let $(M^{2n+2}, J, \omega, \theta)$ be a Vaisman manifold having all basic harmonic $1$-forms of constant length. Assume that $b_1(M) = 2n+1$. Then, $M$ is diffeomorphic to a Kodaira-Thurston manifold where $J$ is the standard left-invariant complex structure under this diffeomorphism.
\end{Theo}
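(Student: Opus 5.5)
The plan is to adapt the argument of \cite{NagyLengthofHarm} to the transverse K\"ahler geometry. On a Vaisman manifold the Hodge decomposition of harmonic $1$-forms (Tsukada, Vaisman) gives $H^1(M,\R) \cong \R[\theta] \oplus H^1_B$-type part. More precisely, every harmonic $1$-form is $c\,\theta + \beta$ with $\beta$ a basic harmonic form, and $b_1(M) = 1 + 2h^{1,0}_B$. Hence $b_1(M)=2n+1$ forces $h^{1,0}_B = n$. Equivalently, there are $2n$ linearly independent real basic harmonic $1$-forms $\beta_1,\dots,\beta_{2n}$, which we may choose so that $\beta_{2k} = J\beta_{2k-1}$. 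The latter are the real and imaginary parts of $n$ holomorphic $1$-forms $\varphi_k$.

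\textbf{Step 1: the frame.} Since the $\beta_i$ are basic harmonic, they are pointwise orthogonal to $\theta$ and $J\theta$. The constant-length hypothesis, applied to all linear combinations, gives that the Gram matrix $\langle \beta_i,\beta_j\rangle$ is constant by polarization. We claim the $\beta_i$ are pointwise linearly independent. The set where they fail to be independent would carry a combination with $|\beta|$ vanishing at one point, hence everywhere by constancy, a contradiction. Thus $\theta, J\theta, \beta_1,\dots,\beta_{2n}$ is a global coframe of $M$ with constant inner products. Denote by $\xi, J\xi, X_1,\dots,X_{2n}$ the dual frame, where $\xi = \theta^\sharp$ is the Lee field. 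Moreover, the $\beta_i$ are closed and $d(J\theta) = \omega - \theta\wedge J\theta$ up to normalization, with $\omega$ the LCK form. On a Vaisman manifold $\omega_0 := d(J\theta)$ is the transverse K\"ahler form, a basic $(1,1)$-form.

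\textbf{Step 2: Albanese fibration and constancy of structure constants.} Using section 3, $\dim_\C \Alb(M) = h^{1,0}_B = n$. The Albanese map $\alpha$ is defined by integrating the $\varphi_k$. Its differential has rank $2n$ everywhere by Step 1, so $\alpha$ is a submersion with fibers the leaves of $\Sigma$ spanned by $\xi, J\xi$. Compact fibers of a proper submersion are elliptic curves, so $\Sigma$ is regular and we get a $T^2$-bundle. To obtain a nilpotent Lie algebra, the key is to show that $\omega_0$ has constant coefficients in the coframe $\beta_i$. The transverse metric $g_T = \sum G^{ij}$-type expression in the $\beta_i$ has constant coefficients, since the Gram matrix is constant. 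Then $\omega_0 = g_T(J\cdot,\cdot)$ is a constant-coefficient combination $\sum c_{ij}\beta_i\wedge\beta_j$, as $J$ acts on $\{\beta_i\}$ by a constant matrix. Thus the structure equations read
$$d\beta_i = 0,\qquad d\theta = 0,\qquad d(J\theta) = \sum c_{ij}\,\beta_i\wedge\beta_j$$
with $c_{ij}$ constant and nondegenerate. This is exactly the Maurer--Cartan system of $\mathfrak{h}_{2n+1}\oplus\R$, the Lie algebra of the Kodaira--Thurston manifold $(H_{2n+1}/\Gamma)\times S^1$. Here the simplification relative to \cite{NagyLengthofHarm} is that no deformation of the metric on $\Alb(M)$ is needed, because the transverse metric is already flat and constant in the harmonic frame.

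\textbf{Step 3: integration and identification of $J$.} A global coframe satisfying constant-coefficient Maurer--Cartan equations on a compact manifold yields a locally free, hence by completeness transitive, action of the simply connected group $G = H_{2n+1}\times\R$. The dual frame vector fields are complete and satisfy the Lie algebra brackets. Therefore $M \cong G/\Gamma$ for a discrete cocompact $\Gamma$, which is a Kodaira--Thurston manifold. Moreover, $J$ maps $\theta\mapsto$ (up to sign) $J\theta$ and $\beta_{2k-1}\mapsto\beta_{2k}$, so $J$ has constant coefficients in a left-invariant coframe, i.e.\ $J$ is left-invariant. Comparing with the standard structure defined by $J\theta \leftrightarrow \theta$ and the standard pairing on $\mathfrak h_{2n+1}$, after a linear change of basis diagonalizing $c_{ij}$ compatibly with $J$ (possible since $\omega_0$ is $J$-compatible positive), $J$ becomes the standard complex structure.

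\textbf{Main obstacle.} I expect the main obstacle to be Step 1 and the first half of Step 2. Specifically, one must verify carefully that harmonic forms on a Vaisman manifold are exactly $\R\theta\oplus$ basic harmonic forms, and that the harmonic $\beta_i$ are $J$-stable as a space. This $J$-stability is what makes the coefficients of $\omega_0$ constant, and where the Vaisman (rather than merely Riemannian) structure is used. I would first try to deduce it from Tsukada's result that holomorphic $1$-forms are basic, combined with the basic Hodge decomposition.
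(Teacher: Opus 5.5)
Your proposal is correct and is essentially the paper's argument written dually. The paper shows that the span of $U$, $V$ and the basic harmonic fields is closed under brackets, with $[X,Y]=\omega_0(X,Y)V$ constant by the constant-length hypothesis and the $J$-stability of $\HH^1_B$, and then integrates this to a transitive action of $\R\times H_{2n+1}$; you instead write the equivalent Maurer--Cartan equations for the coframe $\theta, J\theta, \beta_i$ and integrate them in the same way. The Albanese fibration in your Step 2 is harmless but not needed for this statement, which the paper proves without it (regularity is treated separately in Theorem \ref{MaximalRankTheo}).
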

The above theorem cannot be improved for arbitrary $b_1$ by hoping $M$ should be diffeomorphic to a solvmanifold instead. The classical Hopf manifold is a Vaisman manifold having $b_1 = 1$. It has basic harmonic $1$-forms of constant length trivially, yet it cannot be a solvmanifold since it is not aspherical. We give another example for $b_1 > 1$ in Remark \ref{Rem:CounterExample}.

In section 4 we introduce the Vaisman deformations of a Vaisman structure in the sense of Ornea-Slesar \cite{OrnearSlesar}. We show how these deformations are parametrized by a set of basic $1$-forms on $M$. Through this parametrization we show how the complex structures of Vaisman deformations are given by deformation in the large of the original structure. By a result of Rollenske \cite{Rollenske}, for certain nilmanifolds every deformation in the large of a left-invariant complex structure is also left-invariant. In particular, we obtain the following result.
\begin{Theo}[Theorem \ref{LeftInvDef}]
	\mbox{}\
	\begin{enumerate}[label=(\arabic*)]
		\item Every complex structure $J$ given by a Vaisman deformation of $(J_0, g, \theta)$ is a deformation in the large of $J_0$.
		\item Let $M$ be a Kodaira-Thurston manifold with a Vaisman structure $(J, \omega, \theta)$ with left-invariant $J$. Then, the complex structure of every Vaisman deformation of $(J, \omega, \theta)$ is left-invariant. 
	\end{enumerate}
\end{Theo}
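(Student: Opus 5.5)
For part (1), I will use the parametrization of Vaisman deformations by basic $1$-forms from section 4. In the sense of Ornea--Slesar, a Vaisman deformation of $(J_0, g, \theta)$ keeps the characteristic foliation $\Sigma$ and the transverse holomorphic structure. It changes only the contact-type form $J_0\theta$ (equivalently the connection form $\eta$ of the foliation) by adding a basic $1$-form. Following the parametrization, I write the deformed structure as $J_\beta$, where $\beta$ is a basic $1$-form, and $J_\beta$ is determined by three data: it equals the transverse complex structure on the new horizontal distribution $\ker\theta\cap\ker(\eta+\beta)$, and it sends the Lee field $\xi$ to the anti-Lee field. The plan is to join $J_0$ and $J_\beta$ by the path $J_t := J_{t\beta}$, $t\in[0,1]$. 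Three checks are needed.
\begin{enumerate}[label=(\roman*)]
\item Each $J_t$ is integrable. This holds because the parametrization lands in Vaisman structures, and $t\beta$ satisfies the same admissibility condition as $\beta$, presumably that $d\beta$ is of type $(1,1)$ and the transverse form stays positive. The condition is linear in $\beta$, except positivity, which is convex.
\item The family depends smoothly on $t$.
\item All the manifolds $(M,J_t)$ are compact complex manifolds.
\end{enumerate}
Then $J_\beta$ is connected to $J_0$ through a smooth family of complex structures, which is exactly a deformation in the large. The delicate point is the convexity of the set of admissible $\beta$. If positivity is not convex, I would instead rescale: the transverse metric stays fixed and only the connection form moves, so positivity should be automatic.

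For part (2), I will apply Rollenske's theorem. For nilmanifolds with a left-invariant complex structure satisfying his hypotheses, every deformation in the large stays left-invariant. The Kodaira--Thurston nilmanifold $H_{2n+1}\times S^1 / \Gamma$ with a left-invariant complex structure falls into Rollenske's class. The standard structure is abelian/nilpotent, and Rollenske's hypotheses reduce to the Nomizu-type isomorphism for Dolbeault cohomology. That isomorphism holds for this nilmanifold with any left-invariant $J$, by Console--Fino, or because the structures here are the "rational" ones. Combining with part (1), the complex structure of each Vaisman deformation is a deformation in the large of $J$, hence left-invariant.

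I expect the main obstacle to be checking that every left-invariant $J$ appearing on the Kodaira--Thurston manifold meets the hypotheses of Rollenske's theorem. Specifically, the Dolbeault cohomology must be computed by left-invariant forms, and this must hold uniformly along the whole family, not only at $J$. My first approach is the openness of this Nomizu-type property under deformation (Console--Fino), combined with connectedness of the path constructed in part (1). If that fails, I would use the explicit classification of left-invariant complex structures on $\mathfrak{h}_{2n+1}\oplus\R$ compatible with a Vaisman structure.
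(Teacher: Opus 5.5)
Your proposal follows the paper's proof. The path $t\zeta$, $t\in[0,1]$, stays in the convex set $C(M,J,\omega_0)$ because $\omega_0 - t\,d\zeta = (1-t)\omega_0 + t(\omega_0 - d\zeta)$, so formula $(*)$ gives a smooth path of complex structures, and part (2) is then a direct application of Rollenske's Theorem 4.3. Your worry about checking the hypotheses uniformly along the path is unnecessary: the deformation-in-the-large theorem needs its hypothesis only at the initial left-invariant structure, and an openness-only argument along the path would also need a closedness step. Your fallback in (1) is both unneeded and inaccurate, since the transverse K\"ahler form does change, to $\omega_0 - d\zeta$.
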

Finally, we can use the transverse Calabi-Yau theorem together with a transverse Bochner method to obtain our main theorem with the results discussed above.

In section 7 we discuss a little about transverse geometrically formality (TGF) in the sense of \cite{HabibTGF} in the context of our main results. Geometrically formality was introduced by Kotschick in \cite{Kotschick}. It is a much stronger than the usual de Rham cohomology formality and it is a property of the Riemannian metric. Kotschick showed that geometrically formal metrics shares a lot of properties with flat metrics on compact manifolds. By taking a careful look at \cite[Theorem 10]{sferruzza2025hermitiangeometricallyformalmanifolds} we affirm that, in fact, every flat metric on a compact smooth manifold is geometrically formal. It is natural to consider the transverse version of this property and its relations to the usual geometrically formality, which was stated in \cite{HabibTGF}. Since TGF manifolds have, in particular, basic harmonic $1$-forms of constant length, then it is natural to ask how stronger is TGF in relation to the constant length condition. For Vaisman manifolds with $b_1(M) = 2n +1$ we show that they are equivalent by using the regularity of the foliation.

\mbox{}\\
\textbf{Acknowledgements.} The author is grateful to his supervisor, Hisashi Kasuya, for his helpful comments, suggestions, and illuminating discussions that led to this project.
	
	\section{Preliminaries}

	\subsection{Riemannian Foliations}
	
	Let $(M, \F)$ be a foliated manifold of rank $p$ and codimension $q$. Denote by $T\F$ the distribution associated to the foliation $\F$ and by $Q := TM\slash T\F$ the normal bundle. We have the usual exact sequence of vector bundles,
	\[
	0 \rightarrow L \rightarrow TM \xrightarrow{\Pi} Q \rightarrow 0 
	\]
		
	A vector field $X$ is called {\em foliated} if $[V, X]$ is a section of $T\F$ for all sections $V$ of $T\F$. Let $T$ be a section of $ Q^{\otimes^k} \otimes (Q^*)^{\otimes^l}$. We can consider $\tilde{T}_0 := T \circ \Pi^{\otimes^l}$ a tensor field associated with $T$ on $M$ with values in $Q^{\otimes^k}$. We define:
	
	$$(\Lie_X T) (\Pi X_1, \cdots, \Pi X_l) := (\Lie_X \tilde{T}_0) (X_1,\cdots, X_l).$$

	A straightforward calculation using the usual properties of Lie derivatives of tensor fields and the fact that $T\F$ is involutive shows that $\Lie_X T$ is well defined.

	We say that $T$ is {\em holonomy invariant} if $\Lie_X T = 0$ for all $X\in T\F$. In a foliated local coordinates, the coordinate functions of $T$ and $T_0$ coincides, so holonomy invariance means precisely that the local expression of $T$ on a foliated chart does not depend on the coordinates along the leaves of $\F$.
	A Riemannian bundle metric $g_Q$ on $Q$ is called a {\em transverse Riemannian} metric if $g_Q$ is holonomy invariant.

	\begin{Def}
	 	A Riemannian metric $g$ on $M$ is called {\em bundle-like}, if for any $X, Y \in \mathfrak{X}(M)$ the induced bundle metric $g_Q(\bar{X},\bar{Y}) := g(X^\perp, Y^\perp)$ is a transverse Riemannian metric on $Q$.  We call $(M, \F, g)$ a {\em Riemannian Foliation}.
	\end{Def}
	 
	 Let $g$ be a Riemannian metric on a foliated manifold $(M, \F)$. Let $N := T\F^{\perp}$ be the normal bundle induced by the metric $g$. The bundles $N$ and $Q$ are isomorphic through the usual quotient morphism $\Pi|_N$. Consider the projection morphism $p_N : TM \to N$ and $g_N := g \circ (p_N)^{\otimes^2}$. Notice that $\Pi_N \circ p_N = \Pi$. Suppose $g$ is bundle-like. Then, $g_0 = g_Q \circ \Pi^{\otimes^2} = g_N$ by definition. Hence, $\Lie_X g_N = 0$ for any $X \in \mathfrak{X}(M)$. This remark will be used later.

	 For a foliated manifold $(M, \F)$, an endomorphism $\J : Q \to Q$ is called a transverse almost complex structure if $\J^2 = - \Id_Q$ and $\J$ is holonomy invariant. Furthermore, if for any foliated chart $U$, $\J$ induces an integrable structure on the quotient manifold $\overline{U}$, then $\J$ is called a {\em transverse complex structure}. For a foliated Riemannian manifold, if $\J : Q \to Q$ is a transverse complex structure such that $g_Q(\J \, \bigcdot\,, \J \,\bigcdot ) = g_Q(\bigcdot, \bigcdot )$ and $\omega_0 := g_0((\Pi|_N)^{-1} \, \J \, \Pi \, \bigcdot ,\bigcdot)$ is a closed $2$-form on $M$, the structure $(\F, \omega_0, \J)$ is called a {\em transverse K\"ahler foliation}. 
	 
	 For an oriented Riemannian foliated manifold $(M, \F, g)$ consider the induced orientation on $T\F$ and $Q$. Using the metric bundle $g_Q$, we can consider the transverse Riemannian volume form $\nu_Q$ on $Q$, and its associated $q$-form $\nu :=  \nu_Q \circ \Pi^{\otimes^q}$ on $M$. We also define the characteristic form $\chi$ of $(M, \F, g)$ by	$\chi := \varepsilon^1\wedge \cdots \wedge \varepsilon^p$, where $\varepsilon^i$ is an orthonormal oriented local frame of $T\F^*$. We have that $dV_g = \nu\wedge\chi$.

	Let 
	\[
	\A_B^*(M):= \{\omega \in \A^*(M) \, | \, \iota_X \omega = 0 \text{ and } \Lie_X\omega = 0\}
	\] 
	denote the real valued basic differential forms on $M$. Define $d_B := d|_{\A_B^*(M)}$. The pair $(\A_B^*(M), d_B)$ defines a cochain complex, so we can consider the associated cohomology $H_B^*(M)$. We have a natural homomorphism $H_B^*(M) \to H^*(M)$ by taking a class $[\alpha]_B \mapsto [\alpha] \in H^*(M)$. However, this does not need to be injective nor surjective. We denote by $b_k(\F)$ the dimension of $H_B^k(M)$. 
	
	We define $*_B : \A_B^*( M) \to \A_B^{q - *} (M)$ the basic Hodge star operator by $*_B \eta = (-1)^q *(\eta\wedge \chi)$. The basic Hodge operator satisfies the following equation:  
	$$\omega\wedge *_B\eta = g(\omega, \eta) \nu  \qquad \forall\eta,\omega \in \A_B^*(M).$$
	Assume in addition that $M$ is compact. Consider the following inner product on $\A_B^*(M)$:
	$$(\omega,\eta) := \int_M  \omega \wedge *_B\eta \wedge \chi.$$
	Let $\delta_B$ be the formal adjoint of $d_B$ on $\A_B^*(M)$. We define the basic Laplacian operator $\Delta_B : \A_B^*(M) \to \A_B^*(M)$ to be $\delta_B := d_B \delta_B + \delta_B d_B$. Then, one can consider the space $\mathcal{H}_B^*(M) := \ker \Delta_B$ of harmonic basic forms. In a complete analogy with the Riemannian case, we have the following isomorphism.
	\begin{Theo}[\cite{ElKacimi}, \cite{Tondeur}]
		$H_B^*(M)$ is isomorphic to $\HH_B^*(M)$. 
	\end{Theo}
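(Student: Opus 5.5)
The plan is the usual one from Hodge theory: prove an orthogonal decomposition
\[
\A_B^k(M) = \HH_B^k(M) \oplus d_B\A_B^{k-1}(M) \oplus \delta_B \A_B^{k+1}(M),
\]
with $\HH_B^k(M)$ finite dimensional, and then read off the isomorphism by linear algebra. The algebraic part is routine. Since $M$ is compact and $\delta_B$ is the formal adjoint of $d_B$ with respect to $(\cdot,\cdot)$, we have
\[
(\Delta_B\alpha,\alpha) = \|d_B\alpha\|^2 + \|\delta_B\alpha\|^2 .
\]
Hence $\alpha \in \HH_B^*(M)$ if and only if $d_B\alpha = 0 = \delta_B\alpha$, so every harmonic basic form is $d_B$-closed. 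This gives a map $\HH_B^*(M) \to H_B^*(M)$, $h \mapsto [h]_B$.

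This map is injective: if $h = d_B\beta$ is harmonic, then $\|h\|^2 = (h, d_B\beta) = (\delta_B h, \beta) = 0$. It is surjective by the decomposition. If $d_B\alpha = 0$ and $\alpha = h + d_B\beta + \delta_B\gamma$, then
\[
\|\delta_B\gamma\|^2 = (\alpha - h - d_B\beta,\ \delta_B\gamma) = (d_B\alpha, \gamma) - 0 - 0 = 0,
\]
so $\alpha = h + d_B\beta$. So everything reduces to proving the decomposition.

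To get the decomposition I would first make $\delta_B$ concrete. Write $\kappa$ for the mean curvature form of the leaves with respect to the bundle-like metric $g$. Using $dV_g = \nu\wedge\chi$, Rummler's formula for $d\chi$, and Stokes, one gets
\[
\delta_B = \pm *_B\, d_B\, *_B + \iota_{\kappa_B^\sharp},
\]
where $\kappa_B$ is the basic component of $\kappa$, obtained by the $L^2$ projection onto basic forms (\'Alvarez L\'opez). This exhibits $\Delta_B$ as a second-order operator whose principal symbol is that of the Laplacian of the transverse metric $g_Q$. Thus $\Delta_B$ is elliptic in the transverse directions and vanishes along the leaves. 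It is also formally self-adjoint and nonnegative on the pre-Hilbert space $\A_B^*(M)$.

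The main obstacle is that $\Delta_B$ is not elliptic on $M$, so standard elliptic theory does not apply directly. My plan is to go through Molino's structure theory. Let $\pi: M^\# \to M$ be the bundle of transverse orthonormal frames, carrying the lifted foliation $\F^\#$. The foliation $\F^\#$ is transversally parallelizable, and the closures of its leaves are the fibres of a locally trivial basic fibration $M^\# \to W$ onto a compact manifold $W$. Pulling back by $\pi$ identifies $\A_B^*(M,\F)$ with the $O(q)$-invariant $\F^\#$-basic forms on $M^\#$. These in turn are forms on $W$ that are invariant under the (locally constant) structural Lie algebra $\mathfrak{g}$ acting along the fibres of $W \to$ leaf closures.

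On $W$ I would add to the lifted $\Delta_B$ a Casimir-type operator $C$ built from the transverse parallelism and the $O(q)$-action. Then $\Delta_B + C$ is an honest elliptic operator on $W$, and $C$ vanishes on the invariant forms. Elliptic theory on the compact manifold $W$, together with averaging over the compact group $O(q)$, gives finite dimensionality, smooth regularity and a Green operator. These restrict to the invariant subspace, which is exactly $\A_B^*(M)$. This yields the decomposition.

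This is essentially the El Kacimi--Hector / Kamber--Tondeur argument. The delicate points are two. First, checking that the Green operator preserves basic (invariant) forms. Second, checking that $\pi^*$ intertwines the basic adjoints: the fibre volume of $\pi$ is constant, so the inner products agree up to a constant. Once these are in place, the algebraic argument above finishes the proof.
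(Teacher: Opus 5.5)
The paper gives no proof of this statement; it only cites El Kacimi and Tondeur. Your algebraic reduction (harmonic forms are closed, injectivity, surjectivity from the decomposition) is fine. Your overall architecture is indeed El Kacimi's argument: Molino lift to $M^\#$, add an $O(q)$-Casimir, obtain an elliptic problem on $W$, take $O(q)$-invariants. The gap is in the reduction step, which is false as you state it.

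\emph{Invariant basic forms on $M^\#$ are too many.} Pulling back by $\pi$ does \emph{not} identify $\A_B^*(M)$ with the $O(q)$-invariant $\F^\#$-basic differential forms on $M^\#$, because you have dropped $\pi$-horizontality. The components of the connection form $\varpi$ of the lifted transverse Levi-Civita connection are $\F^\#$-basic, since $\iota_X R^{\nabla^T}=0$ for $X\in T\F$. So for $q=2$ the form $\vartheta^1\wedge\vartheta^2\wedge\varpi^1_2$, with $\vartheta$ the solder form, is $O(2)$-invariant and $\F^\#$-basic. Yet its contraction with the fundamental vertical field is $\pm\vartheta^1\wedge\vartheta^2\neq 0$, so it is not a pullback from $M$.

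\emph{They are not forms on $W$ either.} For an irrational linear foliation of $T^2$, $W$ is finite, while $H_B^1(M)\cong\R$.

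\emph{Consequences.} Your sentence ``the invariant subspace, which is exactly $\A_B^*(M)$'' fails. Your second delicate point also cannot be settled by constancy of the fibre volume. Equal inner products only give $\pi^*\delta_B=P\circ\delta_B^\#\circ\pi^*$, with $P$ the orthogonal projection onto $\pi^*\A_B^*(M)$. In fact $\delta_B^\#\pi^*\beta$ generally has vertical components, coming from the curvature of the transverse Levi-Civita connection (O'Neill's $A$-tensor of $\pi$). For $q=2$ and $\beta$ the transverse area form, they are proportional to the transverse Gauss curvature. Hence the basic Laplacian of $\F^\#$ does not preserve $\pi^*\A_B^*(M)$, and its Green operator cannot simply be restricted.

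\emph{The repair.} This is what El Kacimi actually does: treat basic $k$-forms as basic sections of $\Lambda^kQ^*$, not as forms on $M^\#$.
\begin{enumerate}
\item Points of $M^\#$ are orthonormal frames of $Q$, so the tautological frame trivializes $\pi^*Q$ holonomy-invariantly. With $b\colon M^\#\to W$ the basic fibration, the $\F^\#$-basic sections of $\pi^*\Lambda^*Q^*$ are exactly $C^\infty(W,\Lambda^*(\R^q)^*)$.
\item The $O(q)$-invariant ones, i.e.\ the $O(q)$-equivariant maps $W\to\Lambda^*(\R^q)^*$, are exactly $\pi^*\A_B^*(M)$.
\item Lift $\Delta_B$ as an operator on these sections: write it through $\nabla^T$ and replace $\nabla^T_{e_i}$ by the standard horizontal fields $B_i$. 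In the tautological trivialization this is $\pi^*\nabla^T=d+\varpi$.
\item Add $C=-\sum_a\mathcal{L}_{E_a}^2$, where $\mathcal{L}_{E_a}$ is the infinitesimal $O(q)$-action on sections for an orthonormal basis of $\mathfrak{o}(q)$. Then $C$ vanishes on invariant sections.
\item The sum is $O(q)$-equivariant and maps $\F^\#$-basic functions to $\F^\#$-basic functions. Its principal symbol at $b^*\eta$, for $0\neq\eta\in T^*W$, is $|b^*\eta|^2\Id$, with horizontal and vertical parts together. So it descends to an elliptic operator on $W$.
\item Its Green operator commutes with $O(q)$. Every section over $W$ is already basic, so only this equivariance needs checking. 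The Green operator therefore restricts to the invariants, where the operator coincides with $\Delta_B$.
\end{enumerate}
Your constant-fibre-volume remark is then exactly what identifies the $L^2$ product on $\A_B^*(M)$ with the one on $W$ given by the pushed-forward volume of $M^\#$. No intertwining of codifferentials on $M^\#$ is needed.
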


	We call a foliation $\F$ {\em taut} if there exists a metric which makes all leaves into a minimal submanifold. When $M$ is compact and oriented, tautness can be characterized topologically by saying that $H_B^q(M) \cong \R$. When $\F$ is taut, the basic co-differential can be written as $\delta_B = (-1)^{q(r+1)+1}*_Bd *_B$. For all foliations in this work we assume tautness. See \cite{Tondeur} for more details about these properties.

	Let $(M, \F, \J)$ be a compact foliated manifold with transverse complex structure $\J$. In a similar fashion that is done for complex manifolds, the transverse complex structure induces a splitting $\A_B^k(M)_\C = \bigoplus_{u + v = k}\A_B^{u, v}(M)$. We can define a basic Dolbeault operator $\overline{\partial}_B$ by considering transverse holomorphic charts on $M$, which defines the basic Dolbeault cohomology $H_B^{*, *}(M)$. Take $\overline{\partial}_B^*$ to be the formal adjoint of $\overline{\partial}_B$ with respect to the Hermitian extension of the inner product $(\bigcdot, \bigcdot)$ on $\A_B^*(M)_\C$ and define $\Delta_{\overline{\partial}_B} := \overline{\partial}_B \overline{\partial}_B^* + \overline{\partial}_B^* \overline{\partial}_B$. Then, we define the space of basic Dolbeault forms to be $\HH_B^{*, *}(M) := \ker \Delta_{\overline{\partial}_B}$. Now, suppose $(M, \F, g)$ is a transversely K\"ahler foliated manifold. In a completely similar way to the usual K\"ahler case, by \cite[Theorem 3.3.3]{ElKacimi} and \cite[Theorem 3.4.6]{ElKacimi}, we obtain that $\HH_B^{*,*}(M)$ is a finite dimensional space, there is an isomorphism $H_B^{*, *}(M) \cong \HH_B^{*, *}(M)$ and a decomposition $H_B^{k}(M)_\C = \bigoplus_{u + v = k}H_B^{u, v}(M)$. In particular, basic Dolbeault harmonic forms are basic harmonic.

	\begin{Rem}\label{Rem:RelationOfCandRHarmForms}
		Take a $1$-form $\alpha \in \HH_B^{1,0}(M)$. Then, by the decomposition above $\alpha$ is basic harmonic so we can write $\alpha = \alpha_1 + i\alpha_2$ with $\alpha_1,\alpha_2 \in \HH_B^1(M)$. Now $J\alpha = J\alpha_1 + i J\alpha_2$ and $J\alpha = i\alpha = i\alpha_1 - \alpha_2$, hence $J\alpha_1 = - \alpha_2$. Denote by $A_1$ and $A_2$ the metric dual of $\alpha_1$ and $\alpha_2$, respectively. Therefore, we obtain that $g(J A_1, \bigcdot) = - g(A_1, J \bigcdot) = - \alpha_1 \circ J = J \alpha_1 = -\alpha_2 = -g(A_2, \bigcdot)$, hence $JA_1 = - A_2$. 
	\end{Rem}

	We end this subsection with the following definition.
	\begin{Def}
		For a compact foliated manifold $(M, \F)$, we say that $\F$ is {\em quasi-regular} if all its leaves are compact. We say that $\F$ is {\em regular} if $M \slash \F$ has a natural structure of a smooth manifold.   
	\end{Def}
	When a foliation is regular, it is straightforward to verify that all basic forms and any transverse structure descends naturally to the quotient manifold $M \slash \F$.

	\subsection{Vaisman Manifolds}\label{SubSec:VaismanandSasakian}
	
	Let $(M, J, g)$ be a Hermitian manifold with complex structure $J$ and fundamental form $\omega := g(J \bigcdot, \bigcdot)$ with complex dimension $n \geq 2$. We often denote by $M^{2n}$ to indicate that $M$ has real dimension $2n$. From hereon, we consider all manifolds to be connected and compact. If there exists a closed 1-form $\theta$ such that $d\omega = \theta \wedge \omega$, then the triple $(J, \omega, \theta)$ is called a {\em Locally Conformally K\"ahler} (LCK) structure on $M$. We call this structure {\em Vaisman} if $\theta$ is parallel with respect to the Levi-Civita connection, $\nabla^g \theta = 0$. We implicitly assume that the Lee form is not exact. In this way we exclude the K\"ahler case, unless explicitly stated. In the same way, we always assume that $|\theta|_g = 1$. 
	
	Denote by $U := \theta^{\#}$ and $V := JU$. We also denote by $\theta^c := J\theta$, so that $(\theta^c)^{\#} = V$. 
	
	\begin{Theo}[\cite{VaismanGHopfMfd}]
		Let $(M, J, \omega, \theta)$ be a Vaisman manifold. Then, $$\omega = - d\theta^c + \theta\wedge \theta^c.$$
	\end{Theo}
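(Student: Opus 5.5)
The plan is to prove the identity by checking it separately on the span of $U$ and $V$ and on its orthogonal complement. Since $\theta$ is parallel, $U=\theta^\#$ is a parallel, hence Killing, vector field. I will use the standard Vaisman facts that $U$ is holomorphic ($\mathcal{L}_U J=0$) and that $V=JU$ is Killing and holomorphic as well. I will also use $|\theta|_g=1$, and the relations $\theta(V)=0$ and $\theta^c(U)=0$, which follow from $g(JU,U)=0$.

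\emph{Step 1 (the span of $U,V$).} I contract both sides of the identity with $U$ and with $V$.
\begin{itemize}
\item On the right-hand side, Cartan's formula gives $\iota_U d\theta^c=\mathcal{L}_U\theta^c-d(\theta^c(U))=0$, because $U$ is Killing and holomorphic, so it preserves $g$ and $J$ and hence $\theta^c$. Similarly $\iota_V d\theta^c=\mathcal{L}_V\theta^c-d|\theta^c|^2=0$, because $V$ is Killing and $|\theta^c|=|\theta|=1$ is constant.
\item Still on the right-hand side, $\iota_U(\theta\wedge\theta^c)=\theta^c$ and $\iota_V(\theta\wedge\theta^c)=-\theta$.
\item On the left-hand side, with $\omega=g(J\cdot,\cdot)$, we get $\iota_U\omega=g(JU,\cdot)=\theta^c$ and $\iota_V\omega=g(J^2U,\cdot)=-\theta$.
\end{itemize}
Hence $\omega$ and $-d\theta^c+\theta\wedge\theta^c$ have the same contractions with $U$ and $V$. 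It therefore suffices to compare them on pairs $X,Y$ in $H:=\{U,V\}^\perp$. On $H$ the term $\theta\wedge\theta^c$ vanishes, so I need $d\theta^c(X,Y)=-\omega(X,Y)$ for $X,Y\in H$.

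\emph{Step 2 (the horizontal part).} I write $d\theta^c(X,Y)=g(\nabla_XV,Y)-g(\nabla_YV,X)$. Since $\nabla U=0$, we have $\nabla_XV=(\nabla_XJ)U$. For $\nabla J$ I use the LCK formula coming from the Weyl connection $D=\nabla+\tfrac12(\theta\otimes\Id+\Id\otimes\theta-g\otimes U)$, which satisfies $DJ=0$ because $d\omega=\theta\wedge\omega$. This expresses $(\nabla_XJ)Y$ as $\tfrac12$ times a linear combination of $\theta(JY)X$, $\theta(Y)JX$, $g(X,Y)V$ and $g(JX,Y)U$.

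Substituting $Y=U$ and restricting to $X\in H$, every term involving $\theta(X)$ or $\theta^c(X)$ drops out. I expect to get $\nabla_XV=-JX$ modulo $\Span\{U,V\}$, with the coefficient fixed by $|\theta|=1$ and the factors $\tfrac12$ adding up. Then, for $X,Y\in H$,
\[
d\theta^c(X,Y)=-g(JX,Y)+g(JY,X)=-2\,\omega(X,Y)\cdot\tfrac12\cdot 2\cdot\tfrac12=-\omega(X,Y),
\]
where the precise factor must come out to $1$. This is the only real computation in the argument.

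\emph{Main obstacle.} The main difficulty is the sign and normalisation bookkeeping in Step 2: the conventions for $J$ acting on $1$-forms, the sign in $\omega=g(J\cdot,\cdot)$, and the exact coefficients in the Weyl-connection formula for $\nabla J$. I would pin these down by checking the final identity against the standard Hopf manifold $S^1\times S^{2n-1}$, where $\theta=dt$ and $\theta^c$ is the contact form, so the expected relation $-d\theta^c$ equals the transverse Kähler form can be verified directly. As an alternative check on the coefficient, I can differentiate the candidate identity. Using $d\omega=\theta\wedge\omega$ and $d\theta=0$, the candidate gives $d\omega=-\theta\wedge d\theta^c=\theta\wedge\omega$, since $\theta\wedge\theta\wedge\theta^c=0$. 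This is consistent only with the coefficient $-1$ in front of $d\theta^c$.
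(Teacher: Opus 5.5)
Your reduction in Step 1 is correct. Both $\omega$ and $-d\theta^c+\theta\wedge\theta^c$ contract to $\theta^c$ with $U$ and to $-\theta$ with $V$, so everything comes down to showing $d\theta^c=-\omega$ on $H=\{U,V\}^\perp$. The paper gives no argument of its own here; it only cites Vaisman and flips his sign convention for $\omega$. The gap is Step 2, which you yourself call the only real computation, and which as written does not work.

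\emph{The sign of your Weyl connection is wrong.} Locally $\theta=df$ and $d(e^{-f}\omega)=e^{-f}(-\theta\wedge\omega+\theta\wedge\omega)=0$, so the local K\"ahler metric is $e^{-f}g$. Its Levi-Civita connection is $D_XY=\nabla_XY-\tfrac12\bigl(\theta(X)Y+\theta(Y)X-g(X,Y)U\bigr)$. Your $D$, with $+\tfrac12$, is the Levi-Civita connection of $e^{f}g$, which is K\"ahler only when $d\omega=-\theta\wedge\omega$. Carried out literally, your computation gives $\nabla_XV=+\tfrac12JX$ on $H$, hence $d\theta^c=+\omega$ on $H$, which is the opposite of the claim.

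\emph{The factor in your expected $\nabla_XV$ is wrong.} The value $\nabla_XV=-JX$ would give $d\theta^c(X,Y)=-g(JX,Y)+g(JY,X)=-2\,\omega(X,Y)$. The factors $\tfrac12\cdot2\cdot\tfrac12$ in your display are inserted to reach the answer, not derived.

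The fix is short. From $DJ=0$ with the correct $D$ you get
\[
(\nabla_XJ)Y=\tfrac12\bigl(\theta(JY)X-g(X,JY)U-\theta(Y)JX+g(X,Y)V\bigr).
\]
Using $\theta(V)=0$, $\theta(U)=1$ and $g(X,V)=\theta^c(X)$, this gives
\[
\nabla_XV=(\nabla_XJ)U=-\tfrac12JX-\tfrac12\theta^c(X)U+\tfrac12\theta(X)V .
\]
Hence for $X,Y\in H$ you get $d\theta^c(X,Y)=-\tfrac12 g(JX,Y)+\tfrac12 g(JY,X)=-\omega(X,Y)$. The factor $\tfrac12$ is exactly doubled by the antisymmetrisation.

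Alternatively, you can promote your differentiation check to an actual step, which makes the argument immune to sign bookkeeping. Leave the sign $\kappa=\pm\tfrac12$ in $(\nabla_XJ)Y=\kappa(\cdots)$ undetermined. The computation still gives $d\theta^c|_H=\mu\,\omega|_H$ with $\mu=-2\kappa\neq0$. Step 1 then upgrades this to $d\theta^c=\mu(\omega-\theta\wedge\theta^c)$ on all of $M$. Applying $d$ gives $0=d^2\theta^c=\mu(1+\mu)\,\theta\wedge\omega$. This forces $\mu=-1$, because $(\theta\wedge\omega)(U,X,JX)=|X|^2\neq0$ for nonzero $X\in H$ when $\dim_\C M\geq2$. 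The same identity also shows that your stated $D$, which yields $\mu=+1$, cannot be right.
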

	\begin{Rem}\label{SignConvRem}
		In \cite[Theorem 3.1]{VaismanGHopfMfd} Vaisman derives that $\omega =  d\theta^c - \theta\wedge \theta^c$. This is due to his convention that the fundamental form of a Hermitian manifold $(M, J, g)$ is given by $\omega = g(\bigcdot, J\bigcdot)$, which is the opposite of ours. The authors in \cite{OrnearSlesar} uses the same convention as Vaisman.
	\end{Rem}

	We collect some of the properties of $U$ and $V$.
	
	\begin{Prop}[\cite{VaismanGHopfMfd}, \cite{TsukadaHolomorphicForms}]
		\mbox{}
		\begin{itemize}
			\item $[U, V] = 0$.
			\item $U$ and $V$ are real holomorphic vector fields.
			\item $U$ and $V$ are Killing vector fields.
			\item The induced Riemannian foliation $\Sigma$ generated by $U$ and $V$ is transversely K\"ahler.
			\item The transversely K\"ahler metric is given by $\omega_0 = - dJ\theta$.
		\end{itemize}
	\end{Prop}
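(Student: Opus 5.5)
The plan is to derive everything from two structural facts: $\theta$ is parallel of unit length, and $J$ is integrable and orthogonal. First I would use $\nabla\theta = 0$ to get that $U = \theta^\#$ is parallel, hence Killing. Since $\nabla J$ is not zero on a non-K\"ahler manifold, $V = JU$ needs a separate argument. For a Hermitian manifold with $d\omega = \theta\wedge\omega$, the standard formula for $\nabla J$ of an LCK metric is
$$2(\nabla_X J)Y = \theta(JY)X - \theta(Y)JX + g(X,Y)\,JU - \omega(X,Y)\,U,$$
up to the sign conventions fixed in Remark \ref{SignConvRem}. I would take this formula as given. It gives $\nabla_X V = (\nabla_X J)U$ in terms of $\theta$, and one checks directly that $X\mapsto \nabla_X V$ is skew-symmetric, so $V$ is Killing. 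With this formula, $\nabla_X V = \tfrac12(-JX + \theta(X)V - \theta^c(X)U)$, up to sign, which is visibly skew.

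Next I would prove that $U$ and $V$ are real holomorphic, i.e. $\Lie_U J = 0$ and $\Lie_V J = 0$. Writing $\Lie_U J = \nabla_U J - [\nabla U, J]$ and using $\nabla U = 0$, this reduces to $\nabla_U J = 0$, which the formula gives after substituting $X = U$. For $V$, I would use $\Lie_V J = \nabla_V J - [\nabla V, J]$ together with the explicit $\nabla V$ above. The term $-\tfrac12 JX$ commutes with $J$. The rank-two part, $\tfrac12(\theta\otimes V - \theta^c\otimes U)$, is $J$-linear, so it also commutes with $J$. It then remains to see that $\nabla_V J$ vanishes, which again follows from the formula. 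The bracket is then immediate: $[U,V] = \nabla_U V - \nabla_V U = (\nabla_U J)U + J\nabla_U U - 0 = 0$.

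For the foliation, the distribution $\Sigma = \Span\{U,V\}$ is involutive because $[U,V]=0$, and it is $J$-invariant. Since $U$ and $V$ are Killing, $\Lie_U g = \Lie_V g = 0$, so $g_N$ is holonomy invariant and $g$ is bundle-like. Because $\Lie_U J = \Lie_V J = 0$ and $J$ preserves $\Sigma$, $J$ descends to a holonomy invariant $\J$ on $Q$. Integrability on local quotients follows from the vanishing of the Nijenhuis tensor of $J$.

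For the transverse K\"ahler form, I would compute $\omega_0 := -d\theta^c = \omega - \theta\wedge\theta^c$, using the stated Theorem of Vaisman. This form is closed, and $\iota_U\omega_0 = \iota_V\omega_0 = 0$ follows from $\iota_U\omega = \theta^c$ and $\iota_V\omega = -\theta$, up to sign. Cartan's formula then gives $\Lie_U\omega_0 = \Lie_V\omega_0 = 0$, so $\omega_0$ is basic. On $N = \Sigma^\perp$ it agrees with $\omega$, hence with $g_0(\J\cdot,\cdot)$. This gives the transverse K\"ahler structure, with transverse K\"ahler metric $\omega_0 = -dJ\theta$.

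The main obstacle I expect is the sign and constant bookkeeping in the $\nabla J$ formula, which governs both the Killing property of $V$ and $\nabla_V J = 0$. I would verify the formula against the convention $\omega = g(J\cdot,\cdot)$ before using it.
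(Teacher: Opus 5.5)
The paper gives no proof of this proposition; it simply cites Vaisman and Tsukada. Your argument is essentially the standard one from those sources: read everything off from $\nabla\theta=0$ and the $\nabla J$ formula for an LCK metric. It goes through, but the formula you quote needs one sign corrected before you rely on it.

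With the paper's convention $\omega = g(J\cdot,\cdot)$, the formula is
\[
2(\nabla_X J)Y = \theta(JY)X - \theta(Y)JX + g(X,Y)\,V + \omega(X,Y)\,U .
\]
The last term carries a plus sign. Your version is the correct one for Vaisman's convention $\omega = g(\cdot, J\cdot)$. Read literally with the paper's $\omega$, your formula fails to anticommute with $J$. It also gives $2(\nabla_U J)Y = -2\theta^c(Y)\,U \neq 0$, which would break your proof that $U$ is holomorphic.

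With the corrected formula, the rest of your proposal holds as stated:
\begin{itemize}
\item Your expression $\nabla_X V = \tfrac12\bigl(-JX + \theta(X)V - \theta^c(X)U\bigr)$ is exactly right, with no residual sign ambiguity.
\item One checks $\nabla_U J = \nabla_V J = 0$ directly.
\item The rank-two part $\theta\otimes V - \theta^c\otimes U$ commutes with $J$, as you say.
\end{itemize}

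A useful internal check, which also makes your last step independent of the quoted theorem: since $V$ is Killing,
\[
d\theta^c(X,Y) = 2g(\nabla_X V, Y) = -\omega(X,Y) + (\theta\wedge\theta^c)(X,Y).
\]
This is precisely $\omega = -d\theta^c + \theta\wedge\theta^c$.

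One small point in the bundle-like step. You need the flows of $U$ and $V$ to preserve $N$ as well as $g$. This holds because they preserve $T\Sigma$, since $[U,V]=0$. Holonomy invariance for a general section $fU+hV$ then reduces to checking $U$ and $V$, because $g_N(U,\cdot)=g_N(V,\cdot)=0$.
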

	
	The foliation $\Sigma$ above is called the {\em characteristic foliation} of the Vaisman manifold $(M, J, \omega, \theta)$.
	
	\begin{Prop}[\cite{TsukadaCanonicalFoliation}]\label{Prop:CompactLeaf}
		The characteristic foliation $\Sigma$ of any compact Vaisman manifold has a compact leaf.
	\end{Prop}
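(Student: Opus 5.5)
The plan is to produce a compact leaf as a minimal set of a torus action. Let $G$ be the closure of the flow group generated by $U$ and $V$ inside the isometry group $\Iso(M,g)$. Since $M$ is compact, $\Iso(M,g)$ is a compact Lie group. By the Proposition above, $U$ and $V$ are commuting Killing fields, so $G$ is a compact connected abelian Lie group, hence a torus $T^k$ with $k\ge 2$. The leaves of $\Sigma$ are the orbits of the $\R^2$-action generated by $U$ and $V$. Moreover, $U$ and $V$ are nowhere vanishing, because $|\theta|=1$ forces $|U|=|V|=1$, and they are linearly independent at each point since $V=JU\perp U$.

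Next, the $G$-orbits are the closures of the leaves, and they are compact tori $G/G_x$. A leaf is compact exactly when its $\R^2$-orbit coincides with the $G$-orbit through the same point, i.e.\ when $\dim G\cdot x = 2$. So it suffices to find a point $x$ with $\dim \mathfrak{g}\cdot x = 2$, where $\mathfrak{g}$ is the Lie algebra of $G$. Since $\mathfrak{g}$ contains $U$ and $V$, we always have $\dim \mathfrak{g}\cdot x\ge 2$. The task is therefore to find a point where every element of $\mathfrak{g}$ is tangent to $\Span(U,V)$.

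To find such a point I would use holomorphy. Each $U$, $V$ is real holomorphic, and the holomorphic Killing fields form a closed subgroup, so every $X\in\mathfrak g$ is a holomorphic Killing field. In addition, $X$ commutes with $U$ and preserves $\theta$, since $\theta$ is the dual of the $G$-invariant field $U$ under the $G$-invariant metric. Hence $X$ preserves $\omega_0=-dJ\theta$ and $\theta^c$. Now consider the functions $f_X:=\theta(X)$ and $h_X:=\theta^c(X)$. The Cartan formula gives $\iota_X\omega_0 = -\iota_X d\theta^c = d(\theta^c(X))$, using $\Lie_X\theta^c=0$, so $h_X$ is a Hamiltonian for the transverse action. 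Fix $X$ in the complement of $\Span(U,V)$ and take a maximum point $x$ of $h_X$. There $\iota_X\omega_0=0$, so $X_x\in \ker\omega_0=\Span(U_x,V_x)$.

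The main obstacle is doing this simultaneously for all of $\mathfrak g$, i.e.\ for a basis $X_1,\dots,X_{k-2}$. I would handle it with a moment map argument. The map $\mu=(h_{X_1},\dots,h_{X_{k-2}})$ is a transverse moment map for the torus $G/\exp\Span(U,V)$ acting on the (local) transverse Kähler structure. The set of points where all $X_i$ lie in $\Span(U,V)$ consists of the critical points of generic combinations $\sum c_i h_{X_i}$. Alternatively, I would argue by induction: take the maximum set $Z$ of $h_{X_1}$, which is a closed $G$-invariant union of leaves on which $X_1$ is tangent to $\Sigma$, and restrict $h_{X_2}$ to a component of $Z$. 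If this iteration proves delicate, the fallback is to choose $X$ generating a dense one-parameter subgroup of $G$. The critical points of $h_X$ are exactly the points where $\mathfrak g\cdot x\subset \Span(U,V)$, because the zeros of a Killing field modulo $\Span(U,V)$ for a dense subgroup are common to all of $G$. A maximum point then yields the required compact leaf $G\cdot x$.
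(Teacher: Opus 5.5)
The paper gives no proof here; it quotes the result from Tsukada. Judged on its own, your setup is sound:
\begin{itemize}
\item $G=\overline{\exp\Span(U,V)}\subset\Iso(M,g)$ is a torus of holomorphic isometries preserving $\theta$ and $\theta^c$.
\item Leaf closures are $G$-orbits, and a leaf is compact iff $\dim G\cdot x=2$.
\item $\iota_X\omega_0=d(\theta^c(X))$, so at a critical point of $h_X$ one has $X_x\in\Span(U_x,V_x)$.
\end{itemize}
The gap is the simultaneity step you flag, and none of your three resolutions closes it.

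\textbf{The fallback is false.} Criticality at $x$ means that the \emph{modified} field $X-aU-bV$ vanishes at $x$. Here $a=\theta(X)$, which is constant because $d(\theta(X))=\Lie_X\theta-\iota_Xd\theta=0$, and $b=h_X(x)$. This modified field need not generate a dense subgroup even when $X$ does.

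Concretely, suppose $\dim G\ge 3$, the only nontrivial case. Then $\exp\Span(U,V)$ is dense in $G$, so generic $X\in\Span(U,V)$ generate dense one-parameter subgroups. Yet $h_{aU+bV}\equiv b$, so every point is critical. Meanwhile, leaves through principal orbits are not compact, since those orbits have dimension $\dim G\ge3$ (the action is effective).

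Excluding $\Span(U,V)$ does not repair this. If some $x$ has $\mathfrak{g}_x\neq0$ and $\dim G\cdot x>2$, then $\mathfrak{g}_x\oplus\Span(U,V)$ lies in no rational hyperplane. Hence it contains dense-generating $X\notin\Span(U,V)$ for which $x$ is critical.

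\textbf{The moment-map version is ill-posed.} $G/\exp\Span(U,V)$ is not a torus, because that subgroup is dense in $G$ by construction. Moreover, the claim about critical points of generic combinations is exactly what needs proof.

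\textbf{The missing ingredient is finiteness.} A torus acting on a compact manifold has only finitely many isotropy subgroups. So only finitely many subspaces $W_x:=\mathfrak{g}_x\oplus\Span(U,V)$ occur; the sum is direct because $U,V$ are pointwise independent. Also $W_x=\mathfrak{g}$ iff $\dim G\cdot x=2$. Choose $X$ outside the finite union of the proper $W_x$. This is an open dense condition, not the residual condition of generating a dense subgroup. At a maximum point $x$ of $h_X$ we get $X-aU-bV\in\mathfrak{g}_x$, so $X\in W_x$. This forces $W_x=\mathfrak{g}$, and $G\cdot x$ is a compact leaf.

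\textbf{Alternatively, your induction can be completed.} On the maximum set $Z$ of $h_{X_1}$, both $\theta(X_1)$ and $h_{X_1}$ are constant. So $Z$ is a union of components of the zero set of the holomorphic Killing field $X_1-aU-bV$. Hence $Z$ is a closed, $J$-invariant, $G$-invariant submanifold.

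Now take a maximum $x$ of $h_{X_2}|_Z$. The form $\iota_{X_2}\omega_0$ vanishes on $T_xZ$ by criticality. It also vanishes on $(T_xZ)^\perp$, because $JX_2,U,V\in T_xZ$. Therefore $X_2\in\Span(U,V)$ at $x$, and one iterates.

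Without one of these two arguments, the proposal does not yet produce a compact leaf.
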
 
	
	Now, from \cite[Theorem 10.22]{OrneaVerbitsky} and the above properties of $U$ and $V$ we obtain the following result.
	
	\begin{Prop}\label{Prop:QuasiRefVaismanM}
		Let $(M, J, \omega, \theta)$ be a Vaisman manifold. If  $\Sigma$ is quasi-regular, then $\Sigma$ is defined through the holomorphic action of a complex torus $T^2$ defined by the flows of $U$ and $V$.
	\end{Prop}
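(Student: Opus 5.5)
The plan is to show that when every leaf of $\Sigma$ is compact, the flows of $U$ and $V$ together define an effective holomorphic action of a compact $2$-dimensional real torus whose orbits are exactly the leaves. This torus is then identified with a complex torus $T^2$.

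First, the structure of the leaves. By the proposition collected above, $U$ and $V$ commute, are Killing and real holomorphic, and generate $\Sigma$. So the map $\R^2 \times M \to M$, $((s,t),x) \mapsto \phi^U_s \circ \phi^V_t (x)$, is an action of $\R^2$ by holomorphic isometries, and its orbits are the leaves of $\Sigma$. Since $U$ and $V$ are orthonormal ($|\theta|=1$, $V=JU \perp U$), the action is locally free. Each leaf $L$, being compact by quasi-regularity, is therefore a compact quotient $\R^2/\Gamma_x$ with $\Gamma_x$ the stabilizer of a point $x\in L$. This is a $2$-dimensional torus. Since $V=JU$, the orbit map is holomorphic for the complex structure $\C \cong \R^2$, $s+it$. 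Hence $L \cong \C/\Gamma_x$ is a complex torus and $\Gamma_x$ is a lattice.

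Second, and this is the step I expect to be the main obstacle, the stabilizers have to be controlled globally. The stabilizer lattices $\Gamma_x$ may differ between generic and exceptional leaves, so I would avoid working with a single leaf. Instead, take the closure $G$ of the image of $\R^2$ in the isometry group $\Iso(M,g)$. This is a compact connected abelian Lie group, i.e.\ a real torus, and it acts holomorphically because $J$ is preserved by the flows and the holomorphic condition is closed.

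The claim is that $\dim G = 2$. If $\dim G > 2$, then the $G$-orbits, which are the closures of the leaves, would generically have dimension greater than $2$, contradicting the compactness of every leaf. More precisely, each leaf is compact, hence closed, and equals its own orbit closure. A generic $G$-orbit then has the same dimension as a leaf, namely $2$, and effectiveness of the action forces $\dim G = 2$.

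So $\R^2 \to G$ is surjective with kernel a lattice $\Lambda$, and $G = \R^2/\Lambda$. Using $V = JU$ again, we put on $G$ the complex structure induced from $\C$, so that $G = \C/\Lambda$. The action $G \times M \to M$ is holomorphic in both variables, since it is holomorphic in $M$ and, along orbits, in $G$. This is exactly the input needed for \cite[Theorem 10.22]{OrneaVerbitsky}, which I would invoke to conclude that $\Sigma$ is given by a holomorphic $T^2=\C/\Lambda$ action generated by the flows of $U$ and $V$.

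Finally, one checks that the $G$-orbits are exactly the leaves: $G$ is the image of $\R^2$, and the $\R^2$-orbits are the leaves. This identifies $\Sigma$ with the orbit foliation of the complex torus $T^2$.
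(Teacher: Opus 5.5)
Your argument is correct, but it takes a different route from the paper. The paper gives no argument of its own: it combines the listed properties of $U$ and $V$ (commuting, Killing, real holomorphic) with \cite[Theorem 10.22]{OrneaVerbitsky}. You instead build the torus by hand as the closure $G$ of the $\R^2$-flow in $\Iso(M,g)$. You use quasi-regularity exactly once, to identify the $G$-orbits (the leaf closures) with the leaves, which forces $\dim G=2$. This makes the proof self-contained and shows why quasi-regularity is needed: in general $G$ is a torus of higher dimension whose orbits are the closures of the leaves.

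Two steps deserve a line of justification.
\begin{enumerate}
\item The claim ``effectiveness forces $\dim G=2$'' uses the fact that, for an effective action of a compact abelian group on a connected manifold, the principal isotropy group is contained in every stabilizer and is therefore trivial. Hence principal orbits have dimension $\dim G$. A more direct alternative: every $X\in\mathrm{Lie}(G)$ is a Killing field tangent to $\Sigma$ that preserves $\theta$ and $\theta^c$. Then $d(\theta(X))=0$ and $d(\theta^c(X))=\iota_X\omega_0=0$, so $X$ is a constant combination of $U$ and $V$.
\item Surjectivity of $\R^2\to G$ holds because the differential at $0$ is injective, since $U$ and $V$ are linearly independent. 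So once $\dim G=2$, the image is an open subgroup of the connected group $G$, hence all of $G$.
\end{enumerate}

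With these filled in, your final appeal to \cite[Theorem 10.22]{OrneaVerbitsky} is redundant. You have already produced the holomorphic action of $\C/\Lambda$, generated by the flows of $U$ and $V$, whose orbits are exactly the leaves of $\Sigma$.
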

	
	\begin{Theo}[\cite{ChenPiccini}, \cite{Vaisman0}]
		Let $(M, J, \omega, \theta)$ be a Vaisman manifold. If $\Sigma$ is regular, then $M$ is a principal $T^2$-bundle over a compact K\"ahler manifold $\X$
		$$ T^2 \longrightarrow M \xlongrightarrow{} \X.$$
	\end{Theo}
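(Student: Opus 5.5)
The plan is to realize $M$ as the total space of a principal bundle over the leaf space $\X := M\slash\Sigma$, whose structure group is the complex torus $T^2$ obtained by integrating $U$ and $V$. Since $\Sigma$ is regular, $\X$ is a compact smooth manifold of real dimension $2n$ and $\pi : M \to \X$ is a submersion whose fibers are exactly the leaves of $\Sigma$. Regularity implies in particular that all leaves are compact, so $\Sigma$ is quasi-regular. By Proposition~\ref{Prop:QuasiRefVaismanM}, the flows of $U$ and $V$, which commute because $[U,V]=0$, then generate a holomorphic action of a complex torus $T^2$ whose orbits are the leaves. The next step is to check that this action is free. The orbit map at a point of a leaf is a covering $T^2 \to$ leaf, so stabilizers are finite subgroups of $T^2$. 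I would enlarge the lattice defining $T^2$, that is, replace $T^2$ by $T^2$ modulo the stabilizer. This removes the stabilizers provided they are the same at every point.

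The main obstacle is showing that the stabilizer is constant, which is precisely where regularity (rather than mere quasi-regularity) is used. I would argue as follows. The action is by isometries, since $U$ and $V$ are Killing, so the slice theorem applies. A stabilizer jump at a point $x$ would mean that the leaves near $x$ wrap several times around a tubular neighbourhood of the leaf through $x$. In that case a neighbourhood of $x$ in $M\slash\Sigma$ would be modelled on a quotient $\R^{2n}\slash\Gamma_x$ with $\Gamma_x$ a nontrivial finite group acting effectively. That is an orbifold singularity, and it contradicts the assumption that $M\slash\Sigma$ is a manifold with $\pi$ a submersion. For the same reason the leaf holonomy is trivial. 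Hence all stabilizers coincide with the principal stabilizer, and after the quotient the action is free. Since the action is free and proper (the group is compact) and its orbits are the fibers of $\pi$, the map $\pi$ is a principal $T^2$-bundle.

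It then remains to show that $\X$ is Kähler. The transverse Kähler structure $(\bar J,\omega_0 = -dJ\theta)$ is holonomy invariant and basic, so, as remarked after the definition of regularity, it descends to $\X$. The descended form $\check\omega_0$ with $\pi^*\check\omega_0=\omega_0$ is closed, because $\pi^*$ is injective and $d\omega_0=0$. The descended transverse complex structure is integrable, since integrability is a local condition checked on foliated charts, and $\check\omega_0$ is positive and compatible with it. Therefore $\X$ is a compact Kähler manifold, and $T^2\to M\to\X$ is the required principal bundle.
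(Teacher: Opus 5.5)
The paper does not prove this statement; it is quoted from Chen--Piccinni and Vaisman. Your argument is correct. The closest argument in the paper is Step~2 of Theorem~\ref{MaximalRankTheo}, which proves freeness by a more elementary mechanism than your slice-theorem analysis. There, for $g$ in the stabilizer of $p$, the differential $d(L_g)_p$ is the identity on two pieces of $T_pM$:
\begin{itemize}
\item on $T_p\Sigma$, because the group is abelian, so $L_g$ preserves the fundamental vector fields at the fixed point $p$;
\item on $N_p$, because $\alpha\circ L_g=\alpha$, $d\alpha_p|_{N_p}$ is injective, and the isometry $L_g$ preserves $N$.
\end{itemize}
Rigidity of isometries of a connected manifold then gives $L_g=\Id_M$. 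The same argument with $\pi$ in place of $\alpha$ (using $\pi\circ L_g=\pi$ and $\ker d\pi=T\Sigma$) shows in a few lines that every stabilizer lies in the kernel of the action. Dividing $T^2$ by that finite kernel then gives a free action, with no slice theorem and no principal-isotropy discussion. What your route buys is a picture of what regularity rules out: exceptional orbits, i.e.\ orbifold points of the leaf space.

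To make that route airtight, draw the contradiction from the submersion property, not from the orbifold chart. A quotient $\R^{2n}/\Gamma_x$ can be a manifold, even a complex one. For example, $\C^n/\Z_k$ with $\Z_k$ rotating one coordinate is biholomorphic to $\C^n$ via $z_1\mapsto z_1^k$, as happens at multiple fibres of elliptic surfaces. So ``orbifold singularity'' alone is not a contradiction. The real point is this: $\pi$ restricted to the slice at $x$ is a local diffeomorphism, hence injective near $x$. Yet a slice point $v$ and $\gamma v\neq v$, with $\gamma\in\Gamma_x$ and $v$ arbitrarily small, lie on the same leaf.

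You should also justify that $\Gamma_x$ acts effectively on the slice. An element of the stabilizer acting trivially on the slice fixes the whole tube pointwise, by commutativity, and hence is the identity isometry. The remaining steps are fine: a free action of the compact torus gives a principal bundle, and $\omega_0$ and $\J$ descend to a K\"ahler structure on $\X$.
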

	
	For a lack of better terminology, we call the above fiber bundle presentation of $M$ the {\em Boothby-Wang fibration} of $M$. We often abuse notation and denote by $\omega_0$ both the transverse K\"ahler form and the induced form on the quotient space $\X$ when $\Sigma$ is regular.

	In Vaisman geometry, we have a complete description of basic harmonic forms in terms of harmonic forms in the manifold. In this work, we only need the characterization for $1$-forms stated in the proposition below. 
	
	\begin{Prop}[\cite{KashiwadaHarmonicForms}, \cite{VaismanGHopfMfd}]\label{Prop:Kashiwada}
		Let $(M, J, \omega, \theta)$ be a compact Vaisman manifold. Then, $\HH^1(M)_\C = \HH_B^1(M)_\C \oplus \C \theta$. In particular, $H^1(M)_\C = H_B^*(M)_\C\oplus \C [\theta]$. 
	\end{Prop}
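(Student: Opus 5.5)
We prove $\HH^1(M)_\C = \HH_B^1(M)_\C \oplus \C\theta$ by showing that every harmonic $1$-form on $M$ differs from a multiple of $\theta$ by a basic form, and conversely. The cohomological statement then follows from the Hodge isomorphism on $M$ together with the isomorphism $H_B^*(M)\cong\HH_B^*(M)$ of \cite{ElKacimi}, \cite{Tondeur}. In the cohomological statement, $H_B^*$ should be read as $H_B^1$.

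\textbf{Step 1: harmonic forms are invariant under $U$ and $V$.} Since $U$ and $V$ are Killing, their flows are isometries isotopic to the identity. These flows therefore commute with $\Delta$ and act trivially on $H^1(M)$. By uniqueness of harmonic representatives, they fix every harmonic form. Hence $\Lie_U\alpha = \Lie_V\alpha = 0$ for every $\alpha\in\HH^1(M)$.

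\textbf{Step 2: the functions $\alpha(U)$ and $\alpha(V)$ are constant.} Since $\alpha$ is closed, Cartan's formula gives $d(\alpha(U)) = \Lie_U\alpha = 0$, so $\alpha(U)$ is constant, and likewise $\alpha(V)$. We want $\alpha(V)=0$. Note that $\alpha(V) = g(\alpha,\theta^c)$. Integrating,
\[
\int_M \alpha(V)\,dV_g = (\alpha,\theta^c)_{L^2}.
\]
From $\omega = -d\theta^c + \theta\wedge\theta^c$ we see that $\theta^c$ is not closed. The aim is to show that $\theta^c$ is co-exact, i.e. $\theta^c = \delta\beta$, since then $(\alpha,\delta\beta) = (d\alpha,\beta) = 0$.

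\textbf{Step 3: $\theta^c$ is co-exact.} This is the main obstacle. My first try is $\beta = \pm\omega$ (a multiple of $\omega$). On an LCK manifold, $\delta\omega$ is a constant multiple of $J\theta$, with constant given by $(n-1)$ up to sign in complex dimension $n\ge 2$. This follows from the standard identity $\theta = \tfrac{1}{n-1} J\delta\omega$, up to the paper's sign convention. Because $n\ge 2$, we get $\theta^c\in \operatorname{im}\delta$. Hence $\alpha(V)=0$, and in fact $\alpha\perp\theta^c$.

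\textbf{Step 4: the decomposition.} Set $c=\alpha(U)$, which is constant by Step 2, and $\alpha' = \alpha - c\theta$. Since $\theta$ is parallel, it is harmonic, so $\alpha'$ is harmonic. Using $|\theta|=1$ and $\theta(V)=0$, we have $\alpha'(U)=\alpha'(V)=0$. By Step 1, $\Lie_U\alpha' = \Lie_V\alpha' = 0$. Therefore $\alpha'$ is basic for $\Sigma$.

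We next check that $\alpha'$ is basic harmonic. It satisfies $d_B\alpha'=0$. Since the leaves are generated by Killing fields, they are totally geodesic and $\Sigma$ is taut. For basic $1$-forms in the taut case, $\delta_B = \delta$ restricted to basic forms; this follows from the formula $\delta_B=\pm *_B d *_B$ together with the fact that the mean curvature form vanishes. Hence $\delta_B\alpha'=0$, so $\alpha'\in\HH_B^1$.

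For the converse, take a basic harmonic $\eta$. Then $d\eta=0$, and by the same identity $\delta\eta = \delta_B\eta = 0$, so $\eta\in\HH^1(M)$. The sum is direct because $\theta(U)=1$, while every basic form vanishes on $U$. Finally, we complexify both sides.
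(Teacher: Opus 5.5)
Your argument is correct. The paper gives no proof of this proposition; it is quoted from Kashiwada and Vaisman, so there is no internal argument to compare against. Your route is essentially the classical one:
\begin{itemize}
\item harmonic $1$-forms are fixed by the isometric flows of $U$ and $V$, so $\alpha(U)$ and $\alpha(V)$ are constant;
\item $\alpha(V)=0$ because $\theta^c$ is co-exact (with the paper's conventions $\delta\omega=-(n-1)\theta^c$, $n=\dim_\C M$);
\item $\alpha-\alpha(U)\theta$ is then closed and annihilates $T\Sigma$, hence basic.
\end{itemize}

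Two justifications need one more line each.

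First, being generated by Killing fields does not by itself make the leaves totally geodesic. Here it follows from $\nabla U=0$, $[U,V]=0$ and $|V|=1$. These give $\nabla_UU=\nabla_VU=\nabla_UV=0$. They also give $\nabla_VV=0$, since $g(\nabla_VV,X)=-g(\nabla_XV,V)=-\tfrac12X|V|^2=0$.

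Second, the identity $\delta_B=\delta$ on basic forms holds only in degree one. For example, $\omega_0=-d\theta^c$ is basic harmonic, but being exact and nonzero it is not harmonic on $M$. So it is right that you stated it only for $1$-forms.

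A quick proof of the degree-one case that avoids Rummler's formula:
\begin{itemize}
\item The paper's basic inner product is the $L^2$ product, because $\omega\wedge *_B\eta\wedge\chi=g(\omega,\eta)\,dV_g$. Hence $\delta_B=\pi_B\circ\delta$ on basic forms, where $\pi_B$ is the orthogonal projection onto basic forms.
\item For a basic $1$-form $\phi$, the function $\delta\phi$ is invariant under the isometric flows of $U$ and $V$, because those flows preserve $\phi$.
\item So $\delta\phi$ is already basic, and therefore $\delta_B\phi=\delta\phi$.
\end{itemize}

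Finally, the cohomological statement uses that, on harmonic representatives, the natural map $H^1_B(M)\to H^1(M)$ becomes the inclusion $\HH^1_B(M)\subset\HH^1(M)$. You have established this inclusion, so injectivity and the splitting follow. You are also right that $H_B^*$ in the statement should read $H_B^1$.
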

	
	In the same way, Tsukada showed that holomorphic $p$-forms are basic for $p < \dim_\C M$. Again, we only state the result for $1$-forms.
	
	\begin{Prop}[\cite{TsukadaHolomorphicForms}]\label{Prop:Tsukada}
		Let $(M, J, \omega, \theta)$ be a compact Vaisman manifold. Then, every holomorphic $1$-form is a closed basic form.
	\end{Prop}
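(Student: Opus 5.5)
Since Proposition~\ref{Prop:Tsukada} is quoted from \cite{TsukadaHolomorphicForms}, I would reprove it along the lines of Tsukada's argument, using only the structure collected above. Let $\alpha$ be a holomorphic $1$-form, so $\alpha\in\A^{1,0}(M)$ and $\overline{\partial}\alpha=0$. I will show three things in turn: $\alpha$ is closed, $\iota_U\alpha=\iota_V\alpha=0$, and $\Lie_U\alpha=\Lie_V\alpha=0$. The last two give that $\alpha$ is basic for $\Sigma$.

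\emph{Step 1: closedness.} Here $d\alpha=\partial\alpha$ is a holomorphic $(2,0)$-form. I would show $\partial\alpha=0$ by the usual integration argument, adapted to the non-K\"ahler setting through the identity $\omega=-d\theta^c+\theta\wedge\theta^c$. Consider $\int_M \partial\alpha\wedge\overline{\partial\alpha}\wedge\omega^{n-2}$ (complex dimension $n$). Up to a nonzero constant, pointwise positivity of primitive $(2,0)$-forms makes this equal to $\|\partial\alpha\|^2$. On the other hand, $\partial\alpha\wedge\overline{\partial\alpha}=d(\alpha\wedge\overline{\partial\alpha})$, so Stokes gives $\pm\int_M\alpha\wedge\overline{\partial\alpha}\wedge d(\omega^{n-2})$. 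Since $d\omega^{n-2}=(n-2)\theta\wedge\omega^{n-2}$, this boundary term does not vanish automatically, and this is the main obstacle. My first attempt would be to write $\omega^{n-2}$ via $\omega=\omega_0+\theta\wedge\theta^c$ with $\omega_0=-d\theta^c$. Type considerations then show that only terms containing $\theta\wedge\theta^c$ can pair with $\alpha\wedge\overline{\partial\alpha}$. This reduces the estimate to integrals over the transverse Kähler geometry, weighted against $\theta\wedge\theta^c$, where Stokes applies to $\omega_0^{k}$, which is closed.

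If that route becomes messy, the fallback is a Bochner argument. Because $U,V$ are Killing and holomorphic, $\Lie_U\alpha$ and $\Lie_V\alpha$ are again holomorphic. Decompose $\alpha$ into eigencomponents under the torus closure of the flows; this closure is compact because the flows generate a subgroup of the isometry group of a compact manifold. Then treat each eigencomponent separately.

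\emph{Step 2: vanishing on $U,V$.} Let $f=\alpha(V)$. Since $V-iU$ (up to sign conventions) is of type $(1,0)$ and holomorphic, $\alpha(U)=-i\alpha(V)$ up to sign, so it suffices to treat $f$. For a closed holomorphic form and a holomorphic vector field, the contraction $\alpha(Z)$ of the $(1,0)$ field $Z$ is a holomorphic function, hence constant on compact $M$. Thus $\alpha(U)=c$ is constant. Then $\alpha-c\,(\theta-i\theta^c)$ would be a form vanishing on $U,V$. But $\theta-i\theta^c$ is not closed, since $d\theta^c=-\omega_0\neq0$, while $\alpha$ is closed. Evaluating $d\alpha=0$ via $d\alpha(U,X)=U\alpha(X)-X\alpha(U)-\alpha([U,X])$ and averaging along the compact torus closure forces $c\,\omega_0=0$, hence $c=0$.

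\emph{Step 3: invariance.} By Cartan's formula, $\Lie_U\alpha=d\iota_U\alpha+\iota_Ud\alpha=0$, and likewise for $V$. So $\alpha$ is basic and closed. The real crux is Step 1, where the non-closedness of $\omega$ must be controlled using the parallel Lee form.
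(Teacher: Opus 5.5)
The paper gives no proof of this statement; it is quoted from Tsukada. Your proposal therefore has to stand on its own, and its central step is missing.

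\textbf{Step 1 is only a plan.} You offer two routes and carry out neither, and the one concrete reduction you state is not right. After Stokes, the obstruction is $(n-2)\int_M \alpha\wedge\overline{\partial\alpha}\wedge\theta\wedge\omega^{n-2}$. Since $\theta\wedge\omega^{n-2}=\theta\wedge\omega_0^{n-2}$, no $\theta\wedge\theta^c$ terms survive there at all. Write $\partial\alpha=\gamma+\theta^{1,0}\wedge\delta$ with $\gamma,\delta$ vanishing on $U,V$. Then the obstruction reduces to an integral pairing the transverse part of $\alpha$ with $\delta$, and that has no sign. Type considerations cannot dispose of it until you know $\delta=0$.

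The missing idea is to first integrate against the \emph{closed} form $\omega_0^{n-2}$:
\begin{itemize}
\item Since $\partial\alpha\wedge\overline{\partial\alpha}=d(\alpha\wedge d\bar\alpha)$, Stokes gives $\int_M\partial\alpha\wedge\overline{\partial\alpha}\wedge\omega_0^{n-2}=0$. By counting transverse degrees, only $\theta^{1,0}\wedge\delta$ times its conjugate contributes, which yields a nonzero constant times $\int_M|\delta|^2\,dV$. Hence $\delta=0$.
\item Now $\partial\alpha=\gamma$ is a $(2,0)$-form, hence primitive. Hodge--Riemann makes $\int_M\gamma\wedge\bar\gamma\wedge\omega^{n-2}$ a nonzero multiple of $\|\gamma\|^2$.
\item Write $\omega^{n-2}=\omega_0^{n-2}+(n-2)\,\theta\wedge\theta^c\wedge\omega_0^{n-3}$ and use $d(\theta\wedge\theta^c\wedge\omega_0^{n-3})=\theta\wedge\omega_0^{n-2}$. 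Stokes turns the integral into $(n-2)\int_M\alpha\wedge\bar\gamma\wedge\theta\wedge\omega_0^{n-2}$.
\item This vanishes, because $\bar\gamma\wedge\omega_0^{n-2}$ would be a transverse form of type $(n-2,n)$ in transverse complex dimension $n-1$.
\end{itemize}
Your ``Bochner fallback'' (splitting $\alpha$ into torus eigencomponents) gives no indication of how closedness would follow.

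\textbf{Step 2 does not justify $c=0$.} It is fine up to $\alpha(U)=c$ being constant; this needs no closedness, since contracting a holomorphic $1$-form with the holomorphic field $U^{1,0}$ gives a holomorphic function. But averaging over the torus closure cannot make $c\,\omega_0$ vanish pointwise. At best it shows $c\,\omega_0$ is $d$ of a basic form, and you must then use that the transverse K\"ahler class is nonzero. Concretely:
\begin{itemize}
\item Let $\beta:=\alpha-2c\,\theta^{1,0}$, which vanishes on $U,V$ and has $d\beta=\lambda c\,\omega_0$ for a universal constant $\lambda\neq0$.
\item Apply Stokes to $\beta\wedge\theta\wedge\theta^c\wedge\omega_0^{n-2}$. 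The term $\beta\wedge\theta\wedge\omega_0^{n-1}$ vanishes for degree reasons.
\item This leaves $c\int_M\theta\wedge\theta^c\wedge\omega_0^{n-1}=0$, and the integral is a nonzero multiple of the volume. So $c=0$.
\end{itemize}

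With these two repairs your outline (closed, then $\iota_U\alpha=\iota_V\alpha=0$, then Cartan) does prove the statement.
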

	
		Since $\theta$ is a closed $1$-form, it defines a cohomology class $[\theta] \in H^1(M, \R)$. Now, if $M$ is compact we can take a basis of integral classes $\alpha_1, \dots, \alpha_l$ of $H^1(M, \R)$ and write $[\theta] = \sum_j a_j \alpha_j$. Now, consider the de Rham-Weil isomorphism $\varphi : H^1(M, \R) \to H_1(M)^*$ given by $\varphi (\alpha) := \int_{\bigcdot} \alpha$. The following definition is going to be essential for section \ref{TheMappingTorus}.
	\begin{Def}
		The LCK rank $r$ of an LCK structure is defined to be
		\[
		 r := \dim_\mathbb{Q} (\Span_\mathbb{Q} \varphi([\theta]) (H_1(M))).
		 \]
	\end{Def}

	A Sasakian manifold is a Riemmanian manifold $(S, g)$ of odd dimension such that the Riemannian cone $(\R^{>0} \times S, d t^2+ t^2g)$ has a complex structure such that the metric is K\"ahler. In fact, Sasakian manifolds provide a source of examples of Vaisman manifolds by considering the $\Z$ action of a holomorphic homothety $h_\lambda (t, p) := ( \lambda t, p)$, for $\lambda > 0$, on $\R^{>0} \times M$. The quotient $\Z \backslash (\R^{>0} \times M)$ is naturally endowed with a Vaisman structure. In particular, notice that in this case $M$ is diffeomorphic to $S^1\times S$. We call such examples the trivial Vaisman extension of $S$. Another important set of examples of Vaisman manifolds are the Kodaira-Thurston nilmanifolds defined below.

	 \begin{Def}\label{HeinsbrgDef}
	 	We define the Heisenberg group $H_{2n+1}$ of dimension $2n+1$  to be the following Lie group
	 	$$H_{2n+1} := \left\{
	 	\begin{pmatrix}
	 		1&  x_1& x_2 & \cdots & x_n  & z \\
	 		&  1&  &  &  & y_1 \\
	 		&  &  \ddots&   &  & y_2 \\
	 		&  &  & \ddots &  & \vdots  \\
	 		&  &  &  & 1 & y_n \\
	 		&  &  &  &  & 1 \\
	 	\end{pmatrix} \ \middle| \ x_i,y_i,z \in \R \right\}$$
	 	It is a 2-step nilpotent Lie group admitting a lattice. It also has a natural left-invariant Sasakian structure.
	 \end{Def}
	 
	 A {\em solvmanifold} $M = \Gamma \backslash G$ is a compact quotient of a connected, simply-connected solvable Lie group $G$ by a lattice $\Gamma$. When $G$ is nilpotent we call $M$ a {\em nilmanifold}. For $G = \R \times H_{2n+1}$ we call $M$ a {\em Kodaira-Thurston} manifold. 
	 	 
	 Consider the Kodaira-Thurston manifold $M = \Gamma\backslash(\R \times H_{2n+1})$. Let $\g$ be the Lie algebra of $\R\times H_{2n+1}$. We have $\g =  \R T\oplus \,\Span_\R \,\{X_i,Y_i,Z\}_i$ where $X_i, Y_i, T, Z$ are vectors such that $[X_i, Y_i] = Z$ with $T$ and $Z$ central. We can define a left-invariant complex structure $J$ on $M$ by setting $JX_i := Y_i$ and $JZ = -T$. Now, consider the left-invariant forms on $M$ defined by
	 \[ \omega := \sum_i X_i^*\wedge Y_i^* - T^*\wedge Z^* \qquad \theta := T^*. \]
	 The triple $(J, \omega, \theta)$ defines a Vaisman structure which we call the {\em standard} Vaisman structure of $M$.

	\subsection{Basic Vector Bundles and the Basic Chern Classes}
	Let $(M, \F)$ be a foliated manifold and set $\mathbb{F} = \C$ or $\R$. Let $\pi : E \to M$ be a $\mathbb{F}$-vector bundle of rank $r$. We call $E$ a {\em basic vector bundle} if there exists an open covering $\{U_\alpha\}_\alpha$ and a family of local trivializations $\{ \psi_\alpha : \pi^{-1} (U_\alpha) \to U_\alpha \times \mathbb{F}^r\}_\alpha$ such that its induced family of transition functions $f_{\alpha\beta} : U_{\alpha}\cap U_\beta \to GL(\mathbb{F}^r)$ are basic functions, i.e., $\iota_X df_{\alpha\beta} = 0$ for all $X \in T\F$ and for all $\alpha,\beta$. We call $\psi_\alpha$ a basic local trivialization.

	A $\mathbb{R}$-bilinear operator $D : \Gamma T\F \times \Gamma E \to \Gamma E$ is called a {\em partial flat connection} on $E$ if for any $f \in \mathcal{C}^\infty(M, \mathbb{F})$, $g \in \mathcal{C}^\infty(M, \R)$, $s \in \Gamma E$ and $X, Y \in \Gamma T\F$ 
		\begin{enumerate}[label=(\arabic*)]
			\item $D_X (fs) = X(f)s + fDs$;
			\item $D_{gX} s = gD_X s$;
			\item $R^D (X,Y) := [D_X, D_Y] - D_{[X, Y]} = 0$.
		\end{enumerate}
	Given a basic $\mathbb{F}$-bundle $\pi : E \to M$, we can define a natural partial flat connection $D$ by declaring all local frame induced by the basic local trivializations of the definition above to be parallel sections of $E$. In fact, the existence of such partial connection defines the basic structure of $E$.
	
	\begin{Theo}[\cite{Rawnsley}]
		A $\mathbb{F}$-vector bundle $E$ is a basic if and only if there exists a partial flat connection $D$ on $E$.
	\end{Theo}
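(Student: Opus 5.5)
Both directions are proved by building objects locally out of the given data and gluing them. The argument uses no Vaisman geometry; it only uses the foliation $\F$ and the local product structure of foliated charts.

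\emph{Basic bundle gives a partial flat connection.} Take the basic trivializations $\psi_\alpha$ with transition functions $f_{\alpha\beta}$. On $U_\alpha$, define $D^\alpha$ by declaring the local frame $e^\alpha_1,\dots,e^\alpha_r$ induced by $\psi_\alpha$ to be parallel:
\[
D^\alpha_X\Big(\sum_i s_i e^\alpha_i\Big) := \sum_i X(s_i)\, e^\alpha_i, \qquad X \in \Gamma T\F.
\]
Properties (1) and (2) hold by construction. Curvature vanishes because $[X(\cdot),Y(\cdot)]=[X,Y](\cdot)$ on functions, and $T\F$ is involutive.

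To glue, note $e^\beta_j=\sum_i (f_{\alpha\beta})_{ij}e^\alpha_i$ on $U_\alpha\cap U_\beta$. Since $X(f_{\alpha\beta})=0$ for $X\in T\F$, we get $D^\alpha_X e^\beta_j=0$, so $D^\alpha=D^\beta$ on the overlap. Hence the $D^\alpha$ define a global $D$.

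\emph{Partial flat connection gives a basic bundle.} Cover $M$ by foliated charts $U\cong V\times W$, with plaques $V\times\{w\}$ and $V$ a contractible ball. Restricted to each plaque, $D$ is an honest flat connection on $E|_{V\times\{w\}}$, by properties (1)–(3).

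Fix a transversal $\{v_0\}\times W$ and any frame $e_1,\dots,e_r$ of $E$ along it. Extend each $e_i$ by $D$-parallel transport along the plaques. Flatness together with simple connectivity of $V$ makes the transport path independent. This gives a frame $\{e_i\}$ on $U$ with $D_X e_i=0$ for all $X\in T\F$.

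The main technical point is smoothness of this frame in the transverse variable $w$. In coordinates $(x,w)$, the condition $D_{\partial_{x_k}} e=0$ is a system $\partial_{x_k} e = -A_k(x,w)\,e$ with smooth coefficients. Its compatibility (Frobenius) condition is exactly $R^D=0$. Solve it by integrating along radial segments from $v_0$. Smooth dependence of ODE solutions on parameters gives smoothness in $(x,w)$.

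If two such parallel frames on overlapping charts are related by $e'=f e$, then
\[
0 = D_X e' = X(f)\,e,
\]
so $X(f)=0$ for all $X\in T\F$. Since $X(f)=\iota_X df$, each transition function $f$ is basic. The frames therefore give basic trivializations, and $E$ is basic.

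Finally, applying the first construction to these parallel frames recovers the original $D$. So the existence of $D$ determines the basic structure, as the remark before the statement asserts.
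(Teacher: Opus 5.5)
Your proof is correct and takes the same approach as the paper. Your forward direction is exactly the paper's remark that declaring the frames of the basic trivializations parallel gives a well-defined flat partial connection, which is consistent on overlaps because the transition functions are annihilated by $T\F$. For the converse the paper gives no argument and only cites Rawnsley; your construction of local $D$-parallel frames on foliated charts (flatness on simply connected plaques, plus smooth dependence on the transverse parameter, i.e.\ the Frobenius theorem) is the standard argument behind that citation, and it also supports the paper's comment that $D$ determines the basic structure.
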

	For a basic $\mathbb{F}$-vector bundle $E \to M$ and its partial flat connection $D$, we say that a local section $s$ of $E$ is {\em basic} if $Ds = 0$. Consider another basic $\mathbb{F}$-vector bundle $F \to M$. In the same way, a section $T$ of $(E^*)^{\otimes^k}\otimes F$ is called {\em basic} if, for any basic local sections $s_1, \dots, s_k$ of $E$,  $T(s_1, \dots, s_k)$ is a basic section of $F$.
	
	Let $E$ be a basic $\C$-vector bundle of rank $r$. Given a connection $\nabla$ on $E$, for a choice of a local frame $S := (s_1, \dots, s_r)$, take $\theta := (\theta_j^i)_{ij}$ to be the connection matrix of $1$-forms of $\nabla$, i. e., $\nabla s_j = \sum_i \theta_j^i \otimes s_i$. We say that $\nabla$ is a {\em basic connection} on $E$ if for all basic local frame $S$, $\theta_j^i$ is a basic form for all $i, j$.
	
	In the same way, we can consider the curvature $2$-form $F_\nabla$ of a connection $\nabla$ given by
	$$F_\nabla = d\theta + \frac{1}{2}\theta\wedge \theta.$$
	This is a globally defined closed $2$-form with values in $\End E$. Now, suppose $\nabla$ is a basic connection. Then, $F_\nabla$ is a basic $2$-form with values in $\End E$. In particular, the curvature matrix of $2$-forms associated to a basic local frame $S$ is also basic. This implies that the usual Chern forms $c_i(\nabla) \in \A^{2i}(M)_\C$ are closed basic forms. 
	
	\begin{Lemma}
		Consider two basic connections $\nabla$ and $\tilde{\nabla}$ on $E$. Then, $[c_i(\nabla)]_B = [c_i(\tilde{\nabla})]_B \in H_B^{2i}(M)_\C$.
	\end{Lemma}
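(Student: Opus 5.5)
The plan is to adapt the standard Chern--Weil transgression argument to the basic complex, making sure every form that appears is basic. Consider the affine path of connections $\nabla_t := (1-t)\nabla + t\tilde{\nabla}$, $t \in [0,1]$. With respect to any basic local frame $S$, the connection matrix of $\nabla_t$ is $\theta_t = (1-t)\theta + t\tilde{\theta}$. Since both $\theta$ and $\tilde{\theta}$ have basic entries, so does $\theta_t$. Hence each $\nabla_t$ is a basic connection.

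The difference $A := \tilde{\nabla} - \nabla$ is a global $\End E$-valued $1$-form, with local matrix $\tilde{\theta} - \theta$ in a basic frame. This matrix is therefore basic. If $S' = S g$ is another basic frame, the transition $g$ is basic, because $Dg$-type derivatives along $T\Sigma$ vanish. The matrices then transform by conjugation $g^{-1}(\cdot) g$, which preserves basicness, so the notion is frame-independent.

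Next, write $c_i(\nabla_t) = P_i(F_{\nabla_t})$ for the invariant polynomial $P_i$, and let $\tilde{P}_i$ denote its polarization. The usual computation gives
\[
\frac{d}{dt} P_i(F_{\nabla_t}) = i\, d\, \tilde{P}_i(A, F_{\nabla_t}, \dots, F_{\nabla_t}).
\]
This follows from $\frac{d}{dt}F_{\nabla_t} = d^{\nabla_t}A$, the Bianchi identity, and ad-invariance of $\tilde{P}_i$. It is a pointwise, local identity, so it holds verbatim here. Integrating in $t$ yields
\[
c_i(\tilde{\nabla}) - c_i(\nabla) = d\, T_i, \qquad T_i := i\int_0^1 \tilde{P}_i(A, F_{\nabla_t}, \dots, F_{\nabla_t})\, dt.
\]

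The remaining step, and the only one that differs from the classical case, is to verify that $T_i \in \A_B^{2i-1}(M)_\C$. Then the difference is $d_B$-exact and the classes agree in $H_B^{2i}(M)_\C$. In a basic local frame, $T_i$ is a polynomial expression, with constant coefficients in $t$, in the entries of $\tilde\theta - \theta$ and of $F_{\nabla_t} = d\theta_t + \tfrac12\theta_t\wedge\theta_t$.

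The entries of $F_{\nabla_t}$ are basic. Indeed, $d$ preserves basic forms: for $X \in T\Sigma$ and basic $\beta$ we have $\iota_X d\beta = \Lie_X\beta - d\iota_X\beta = 0$ and $\Lie_X d\beta = d\Lie_X\beta = 0$. Wedge products of basic forms are also basic. Finally, integration in the parameter $t$ commutes with $\iota_X$ and $\Lie_X$. Hence $T_i$ is basic on each trivializing open set.

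Since $T_i$ is a globally defined form, being the $\tilde{P}_i$ of global $\End E$-valued forms, and basicness is a local condition, $T_i$ is basic on $M$. This gives $[c_i(\nabla)]_B = [c_i(\tilde{\nabla})]_B$.

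I expect no real obstacle. The one point needing care is the claim that the properties "basic connection" and "basic difference form" are independent of the chosen basic frame. This follows because transition functions between basic frames are basic, so conjugation by them preserves basicness.
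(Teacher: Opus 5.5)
Your proposal is correct and follows the paper's route. The paper observes that $A=\nabla-\tilde{\nabla}$ is a basic $\End E$-valued $1$-form and then defers to the standard Chern--Weil transgression argument of Huybrechts (Lemma 4.4.6), which is exactly what you carry out along the path $\nabla_t$; your explicit check that the transgression form $T_i$ is basic supplies the detail the paper leaves implicit.
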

	\begin{proof}
		We have that $A := \nabla - \nabla' \in \A^1(M, \End E)$. For a basic local frame $S$ of $E$, $A$ is given locally by $(\theta_j^i - \tilde{\theta}_i^j)_{ij}$ which is a matrix of basic $1$-forms. Thus, $A$ is a basic $1$-form with values in $\End E$. With this observation, the result follows by the same argument in the proof of \cite[Lemma 4.4.6]{Huybrechts}.
	\end{proof}

	\begin{Def}
		Let $E \to M$ be a $\C$-vector bundle and suppose $E$ admits a basic connection $\nabla$. We define the basic Chern classes by $c_{i, B}(E) := [c_i(\nabla)]_B \in H_B^{2i}(M)_\C$.
	\end{Def}
	
	Consider a Riemannian foliated manifold $(M, \F, g)$, with $g$ bundle-like. Let $Y \in \Gamma N$. We can define the {\em transverse connection} $\nabla^T$ on $N$ in the following way:
	\begin{equation*}
		\nabla^T_X Y := \begin{cases*}
			(\nabla^g_X Y)^\perp & if  $X \in \Gamma N$\\
			[X, Y]^\perp & if  $X \in \Gamma T\F$
		\end{cases*}
	\end{equation*}
	where $\nabla^g$ is the Levi-Civita connection of $(M, g)$. Under the isomorphism $N \cong Q$ induced by $g$, $\nabla^T$ is the {\em transverse Levi-Civita connection} as defined in \cite[Chapter 5]{Tondeur}.

	\begin{Prop}
		The normal bundle $N$ is a basic $\R$-vector bundle and $\nabla^T$ defined above is a basic connection on $N$.
	\end{Prop}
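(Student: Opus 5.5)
We need to prove two things: that $N$ carries a partial flat connection, and that $\nabla^T$ has basic connection forms in basic local frames. The plan is to build the partial flat connection $D$ on $N$ from the Bott-type formula, $D_X Y := [X,Y]^\perp$ for $X \in \Gamma T\F$ and $Y\in\Gamma N$. Then we invoke Rawnsley's theorem to conclude that $N$ is basic.

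\textbf{Step 1: $D$ is a partial flat connection.} Properties (1) and (2) follow by expanding the brackets:
\[
[X,fY]^\perp = X(f)Y + f[X,Y]^\perp, \qquad [gX,Y]^\perp = g[X,Y]^\perp - Y(g)X^\perp = g[X,Y]^\perp,
\]
since $X^\perp = 0$.

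For flatness (3), we identify $N\cong Q$ via $\Pi|_N$. Under this identification $D$ becomes the Bott connection $D_X \bar Y = \overline{[X,Y]}$. We have
\[
[D_X,D_Y]\bar Z - D_{[X,Y]}\bar Z = \overline{[X,[Y,Z]] - [Y,[X,Z]] - [[X,Y],Z]} = 0
\]
by the Jacobi identity. Here we use that $T\F$ is involutive, so replacing $[Y,Z]$ by $[Y,Z]^\perp$ only changes it by a section of $T\F$, which is killed after bracketing with $X$ and projecting to $Q$.

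Alternatively, the basic local frames can be written down directly. In a foliated chart with transverse coordinates $y^1,\dots,y^q$, take $s_a := (\partial_{y^a})^\perp$. These are $D$-parallel because $[\partial_{x^i},\partial_{y^a}]=0$. Hence the transition functions between such frames are the Jacobians of the transverse coordinate changes, which are basic functions.

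\textbf{Step 2: $\nabla^T$ is a basic connection.} Fix a basic local frame $S=(s_a)$ as above, and let $\nabla^T s_b = \sum_a \theta^a_b\otimes s_a$. We must show each $\theta^a_b$ is basic, i.e.\ $\iota_X\theta^a_b=0$ and $\Lie_X\theta^a_b=0$ for $X\in T\F$.

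The first condition holds by construction: $\nabla^T_X s_b = D_X s_b = 0$. Given this, $\Lie_X\theta = \iota_X d\theta$, so it suffices to show that the coefficients $\theta^a_b(W) = \langle\nabla^T_W s_b, s^a\rangle$ are holonomy invariant. This should be checked for $W$ foliated, i.e.\ $W$ a lift of $\partial_{y^c}$.

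For this we use the Koszul formula for $\nabla^g$ restricted to normal components:
\[
2g(\nabla^g_{W}s_b,s_a) = W g(s_b,s_a) + s_b\, g(W,s_a) - s_a\, g(W,s_b) + \text{bracket terms}.
\]
The brackets of foliated normal fields have normal parts that are independent of the leaf coordinates. Bundle-likeness means $g_N$ is holonomy invariant ($\Lie_X g_N=0$, as remarked earlier). Consequently every term is a basic function, so the Christoffel-type coefficients $g(\nabla^T_{s_c}s_b,s_a)$ depend only on $y$. The coefficients $\theta^a_b$ are then obtained by inverting the basic matrix $g_N(s_a,s_b)$, so they are basic.

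This step is the main obstacle. The care needed is in checking that the $T\F$-components of $\partial_{y^c}$ and of the brackets do not contribute to the normal Koszul terms; they do not, because $g(T\F,N)=0$. Equivalently, one can cite that under $N\cong Q$ the connection $\nabla^T$ is the transverse Levi-Civita connection of \cite[Chapter 5]{Tondeur}, which is locally the pullback of the Levi-Civita connection of the quotient $\overline U$ with metric $g_Q$. Its connection forms in the frame $\partial_{y^a}$ are the pulled-back Christoffel forms $\Gamma^a_{cb}(y)\,dy^c$, which are manifestly basic.
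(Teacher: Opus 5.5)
Your proof is correct. Step 1 matches the paper, which only cites Tondeur for the flatness of the Bott connection $D_XY=[X,Y]^\perp$ and notes that $DY=0$ iff $Y$ is foliated. Your Jacobi-identity check and the explicit frames $(\partial_{y^a})^\perp$ fill this in; the frames, whose transition functions are Jacobians of transverse coordinate changes, even let you avoid Rawnsley's theorem.

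Step 2 takes a different route from the paper.
\begin{itemize}
\item The paper takes an arbitrary local frame $Y_1,\dots,Y_k$ of foliated normal fields and invokes Tondeur's Theorem 5.11: $(\Lie_Z\nabla^T_XY)^\perp=\nabla^T_{[Z,X]}Y+\nabla^T_X(\Lie_ZY)^\perp$ for $Z\in\Gamma T\F$. It applies this with $X$ either tangent or basic normal, so that $[Z,X]\in\Gamma T\F$ and $\nabla^T_{[Z,X]}Y_i=D_{[Z,X]}Y_i=0$. This gives $\Lie_Z\theta^i_j=0$ abstractly, while $\iota_Z\theta^i_j=0$ comes from $DY_i=0$, exactly as in your argument.
\item You prove the holonomy invariance by hand. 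In the frame $s_a$ the bracket terms of the Koszul formula drop out (in fact $[s_c,s_b]\in\Gamma T\F$). Bundle-likeness makes $h_{ab}=g(s_a,s_b)$ basic. Hence $\theta^a_b=\sum_c\Gamma^a_{cb}(y)\,dy^c$, where $\Gamma^a_{cb}$ are the Christoffel symbols of $h$.
\end{itemize}

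Your version is self-contained and explicitly identifies $\nabla^T$ locally as the pullback of the Levi-Civita connection of $(\overline U,g_Q)$. The paper's version is coordinate-free and outsources exactly this content to Tondeur.

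The paper's route does give one thing for free: it treats every basic local frame at once, as the paper's definition of a basic connection requires. You only treat the frames $(\partial_{y^a})^\perp$. You can close this in one line. Any other $D$-parallel frame is $S'=SA$, and $D_XS'=S\,X(A)$ forces $X(A)=0$ for all $X\in\Gamma T\F$. So $A$ and $dA$ are basic, and $\theta'=A^{-1}\theta A+A^{-1}dA$ is again a matrix of basic $1$-forms.
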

	\begin{proof}
		For sections $Y \in \Gamma N$ and $X \in \Gamma T\F$, define $D_X Y := [X, Y]^\perp$. This is the partial Bott connection of $N$ which is a flat partial connection \cite[Chapter 3]{Tondeur}. Notice that $D Y = 0$ if and only if $Y$ is a foliated vector field. From \cite[Theorem 5.11]{Tondeur}, we know that $\nabla^T$ satisfies the following condition:
		$$(\Lie_Z \nabla^T_X Y)^\perp = \nabla^T_{[Z, X]} Y + \nabla^T_X (\Lie_Z Y)^\perp \qquad \forall X \in \Gamma TM, Y \in \Gamma N, Z \in \Gamma T\F .$$
		Suppose $Y$ is a foliated vector field on some open set $U \subset M$. Assume further that $Y$ is a local section of $N$. Thus, $(\Lie_Z Y)^\perp = 0$ yields $(\Lie_Z \nabla^T_X Y)^\perp = \nabla^T_{[Z, X]} Y$. Let $Y_1, \dots, Y_k$ be a local frame of $N$ composed of foliated vector fields. Then, $\nabla^T Y_j =  \sum_i\theta^i_j \otimes Y_i$. Hence, $\Lie_Z \nabla^T Y_j =\sum_i ( (\Lie_Z \theta^i_j) \otimes Y_i + \theta^i_j \otimes [Z, Y_i])$, which implies that $(\Lie_Z \nabla^T Y_j)^\perp = \sum_i (\Lie_Z \theta^i_j) \otimes Y_i$. If $X$ is a local section of $T\F$, then $[Z, X] \in \Gamma  T\F$, hence $\nabla^T_{[Z, X]} Y_i = 0$. If $X \in \Gamma N$ is a basic section, then $[Z, X] \in \Gamma T\F$, hence $\nabla^T_{[Z, X]} Y_i = 0$. These two facts together yield $\Lie_Z \theta^i_j = 0$. On the other hand $\nabla_Z^T Y_i = [Z, Y_i]^\perp = 0$, since $Y_i$ is foliated. Thus, $\theta^i_j (Z) = 0$ for any local section of $T\F$.

	\end{proof}
	
	Suppose $(M, \F, g)$ is a foliated Riemannian foliation and assume that $(\F, \omega_0, \J)$ is a transverse K\"ahler foliation on $M$. Identify $N$ with $Q$ through the isomorphism induced by $g$. By taking the complexification $N \otimes \C$ of the normal bundle $N$ and considering the $\C$-extension of $\nabla^T$ and $\J$, we obtain the eigenspace decomposition $N\otimes \C = N^{1,0} \oplus N^{0,1}$ with respect to $\J$. By a similar argument as in the K\"ahler case, one can show that $\nabla^T|_{N^{1,0}}$ restricts to a connection on $N^{1,0}$, which is also basic as a consequence of the previous proposition.
	
	\begin{Def}
		Let $(M, \F, g, \J)$ be a foliated Riemannian manifold with a transverse K\"ahler structure $(\F, \omega_0, \J)$. We define by $c_{i, B}(\F) := c_{i, B}(N^{1,0})$ the basic Chern classes of the foliation $\F$.
	\end{Def}
	
	Denote by $R^{\nabla^T} \in \A^2(M, \End N)$ the curvature $2$-form of $\nabla^T$. We can extend it to have values in $\End TM$ by defining $R^{\nabla^T}(X, Y) Z := 0$ whenever $Z$ is a vector in $T\F$. For any point $p\in M$ and $X, Y \in T_pM$ define $\Ric^T(X, Y) := \Tr (Z \mapsto  R^{\nabla^T}(Z, X)Y)$. If $Y \in T_p\F$, then $\Ric^T(X, Y) = 0$ by construction. If $X \in N_p$, then $\Ric^T(X, Y) = 0$ by \cite[Corollary 5.12]{Tondeur}. Once again, by \cite[Corollary 5.12]{Tondeur} we can conclude that $\Ric^T$ is a basic symmetric $2$-tensor. Similarly to the K\"ahler case, if $(\F, \omega_0, \J)$ is a transverse K\"ahler foliation, $\Ric^T$ is $\J$-invariant, therefore $\rho^T := \Ric^T(\J \bigcdot, \bigcdot)$ is the transverse Ricci form associated with $\omega_0$. 
	
	\begin{Prop}
		Let $(M, \F, \omega_0, \J)$ be a transversely K\"ahler foliated manifold. Then, $\rho^T = 2\pi c_1(\nabla^T|_{N^{1,0}})$. In particular, $[\rho^T]_B = 2 \pi c_{1,B}(\F)$.
	\end{Prop}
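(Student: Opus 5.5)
The plan is to reduce the statement to the classical K\"ahler identity $\rho = 2\pi c_1$ (i.e.\ $\rho = i F$ for the Chern connection of $T^{1,0}$, with the usual normalization $c_1(\nabla) = \frac{i}{2\pi}\Tr F_\nabla$). The reduction works locally on foliated charts: the curvature of $\nabla^T$ is transversally the curvature of the Levi-Civita connection of the local quotient K\"ahler manifold.

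\textbf{Step 1: pass to a local quotient.} Fix a foliated chart $U$ with local quotient $\pi : U \to \overline{U}$. The transverse metric $g_Q$ and the transverse complex structure $\J$ are holonomy invariant, so they descend to a metric $\bar g$ and an integrable complex structure $\bar J$ on $\overline{U}$. The form $\omega_0$ is basic, so it descends to a closed form $\bar\omega$. Hence $(\overline{U}, \bar g, \bar J, \bar\omega)$ is K\"ahler.

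\textbf{Step 2: identify $\nabla^T$ with the quotient Levi-Civita connection.} Identify $N|_U$ with $\pi^*T\overline{U}$ via $d\pi$. For foliated (basic) sections $Y, Y'$ of $N$ and $X \in \Gamma N$ foliated, the standard fact \cite[Chapter 5]{Tondeur} gives that $(\nabla^g_X Y)^\perp$ is $\pi$-related to $\nabla^{\bar g}_{\bar X}\bar Y$. This follows from the Koszul formula, because $g$ is bundle-like. Moreover $\nabla^T_Z Y = 0$ for $Z \in T\F$ and $Y$ foliated, so $\nabla^T = \pi^*\nabla^{\bar g}$ on $U$. Consequently $R^{\nabla^T} = \pi^* R^{\bar g}$ on normal vectors, and it vanishes when an argument is in $T\F$, matching the extension by zero used in the definition of $\Ric^T$. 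Taking traces gives $\Ric^T = \pi^*\Ric^{\bar g}$, and therefore $\rho^T = \pi^*\bar\rho$. In the same way, the restriction of $\nabla^T$ to $N^{1,0}$ is the pullback of $\nabla^{\bar g}$ restricted to $T^{1,0}\overline{U}$. Since $\bar g$ is K\"ahler, this restriction is the Chern connection of $T^{1,0}\overline{U}$.

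\textbf{Step 3: apply the K\"ahler identity and globalize.} On $\overline{U}$ the classical identity $\bar\rho = i\Tr F_{\nabla^{\bar g}|_{T^{1,0}}} = 2\pi c_1(\nabla^{\bar g}|_{T^{1,0}})$ holds; see e.g.\ \cite{Huybrechts}. It is proved by writing $\bar\rho(X,Y) = \Ric(\bar J X, Y)$ and using the Bianchi identity to express $\Ric(\bar JX,Y)$ as $\tfrac12\Tr(\bar J\circ R(X,Y))$. Pulling this identity back by $\pi$ gives $\rho^T = 2\pi c_1(\nabla^T|_{N^{1,0}})$ on $U$. Both sides are globally defined forms, so the identity holds on all of $M$. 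By the preceding Proposition, $\nabla^T|_{N^{1,0}}$ is basic, so both sides are closed basic forms. Taking basic classes and using the Definition gives $[\rho^T]_B = 2\pi c_{1,B}(\F)$.

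\textbf{Main obstacle.} The only delicate step is Step 2: checking that the extension of $R^{\nabla^T}$ by zero and the trace defining $\Ric^T$ agree with the quotient Ricci tensor. Concretely, the trace must be taken only over $N$, since the endomorphism vanishes on $T\F$ and maps into $N$. The step also requires care with sign conventions relating $\omega_0$ and $\rho^T$, because the paper uses $\omega = g(J\bigcdot,\bigcdot)$. Once these are matched, everything else is the K\"ahler computation.
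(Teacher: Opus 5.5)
Your proposal is correct and is essentially the paper's argument: the paper only says the proof is analogous to the K\"ahler case, and your reduction makes that precise. You pass to local quotients, identify $\nabla^T = \pi^*\nabla^{\bar g}$ via O'Neill's lemma for the local Riemannian submersion (so $\Ric^T = \pi^*\Ric^{\bar g}$), and then invoke the K\"ahler identity $\rho = \tfrac12\Tr(J\circ R(\cdot,\cdot)) = i\Tr F$. Your sign conventions also check out: with $\Ric(X,Y) = \Tr(Z\mapsto R(Z,X)Y)$ and $\rho = \Ric(J\,\cdot,\cdot)$, the Bianchi argument gives exactly $\rho = \tfrac12\Tr(J\circ R)$, hence $\rho^T = 2\pi c_1(\nabla^T|_{N^{1,0}})$ with $c_1 = \frac{i}{2\pi}\Tr F$.
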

	As before, the proof is analogous to the K\"ahler case. For more on basic Chern classes and applications see \cite{biswas2023sasakiangeometryheisenberggroups, BiswasKasuya1, BiswasKasuya2}.

	\section{The Albanese Torus and the Albanese Map of Vaisman Manifolds}\label{Sec:AlbTorus}
	
	For every compact complex manifold $M$ one can define an associated complex torus in the following way. Denote by $\Omega^p(M)$ the space of holomorphic $p$-forms of $M$ and define a group homomorphism $\varphi : H_1(M) \to (\Omega^1(M))^*$ by
	$$\varphi([\gamma])=\left.\int_\gamma  \,\right|_{\Omega^1(M)}.$$
	Recall that for a compact complex manifold $\Omega^p(M) = H^{p,0}(M)$ are finite dimensional complex vector spaces.
	
	\begin{Def}
		Let $M$ be a compact connected complex manifold. Consider $\Delta := \overline{\varphi(H_1(M))}$ the closure of $\varphi(H_1(M))$ in the finite dimensional vector space $(\Omega^1)^*$. The space $\Alb(M) := \faktor{(\Omega^1(M))^*}{\Delta}$ is called the {\em Albanese torus} of $M$. Fix a point $x_0 \in M$. The map $\alpha_{x_0} : M \to \Alb(M)$ defined by
		$$\alpha_{x_0}(x) := \left[\int_{x_0}^{x} \right]$$
		is well-defined and called the {\em Albanese map} of $M$.
	\end{Def}
	When it is clear from the context, we omit the fixed point $x_0$ from our notation. As the name suggests $\Alb (M)$ so defined is a torus.
	
	\begin{Prop}[Theorem 9.7 in \cite{Ueno}]
		\mbox{}\
		\begin{itemize}
			\item The Albanese Torus of $M$ is a complex torus.
			
			\item The Albanese map $\alpha_{x_0}$ is a holomorphic map.
			
			\item The pair $(\Alb M, \alpha_{x_0})$ satisfies a universal property as follows.

		\end{itemize}
	Let $F : M \to T$ be a holomorphic map from $M$ to a complex torus $T$. Then, there exists a unique holomorphic map $\tilde{F} : \Alb(M) \to T$ such that the diagram below commutes
	\[
	\begin{tikzcd}
		M \arrow[r, "F"] \arrow[d, "\alpha_{x_0}"'] & T \\
		\Alb (M) \arrow[ru, "\tilde{F}"', dashed]         &        
	\end{tikzcd}
	\]
	\end{Prop}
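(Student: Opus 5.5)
The statement is the classical Albanese theorem (Ueno, Theorem 9.7). I would prove its three items in order: that $\Alb(M)$ is a complex torus, that $\alpha_{x_0}$ is holomorphic, and the universal property.

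\emph{The quotient is a compact complex torus.} Set $V := (\Omega^1(M))^*$, a finite dimensional complex vector space. The subgroup $\Delta = \overline{\varphi(H_1(M))}$ is a closed subgroup of $V$, so it has the form $W \oplus L$ with $W$ a real vector subspace and $L$ a discrete group. I first show that $\varphi(H_1(M))$ spans $V$ over $\R$. If not, there is a nonzero $\R$-linear functional $\ell$ on $V$ vanishing on $\varphi(H_1(M))$. Write $\ell = \mathrm{Re}(\lambda)$ with $\lambda \in V^* = \Omega^1(M)$. Then $\mathrm{Re}\int_\gamma \lambda = 0$ for every cycle $\gamma$. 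Holomorphic $1$-forms on a compact complex manifold are closed, so $f(x) = \mathrm{Re}\int_{x_0}^x \lambda$ is a well-defined pluriharmonic function on $M$. By the maximum principle it is constant, hence $\mathrm{Re}\,\lambda = 0$, which forces $\lambda = 0$, a contradiction. Thus $V/\Delta$ is compact. It remains to show that $W$ is a complex subspace. I would argue that the largest complex subspace $W \cap iW$ and $W$ coincide after dualizing: a functional $\lambda$ with $\mathrm{Re}\lambda|_W = 0$ should have vanishing periods modulo a lattice. I expect this to be the main obstacle, and the standard route is as follows. Take the largest complex subspace $W_\C \subseteq W$. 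Then $V/\overline{\Lambda + W_\C}$ is a genuine complex torus, and every holomorphic map to a torus factors through this quotient. If the sets differ, one uses that $\Delta$ is the closure of a finitely generated group. In Ueno's convention one defines $\Alb$ as $V$ modulo the smallest closed complex Lie subgroup containing the periods. I would adopt that reading, since it makes the quotient a complex torus by construction.

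\emph{Holomorphy of the Albanese map.} Locally, $\alpha_{x_0}$ lifts to $x \mapsto (\omega \mapsto \int_{x_0}^x \omega)$ along a path in a simply connected chart. Its components against a basis $\omega_1,\dots,\omega_k$ of $\Omega^1(M)$ are local primitives of holomorphic forms, so they are holomorphic. Independence of the path modulo $\Delta$ follows from closedness of the $\omega_j$ together with the definition of $\varphi$.

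\emph{Universal property.} Let $F: M \to T = \C^m/\Lambda'$ be holomorphic; after translating, assume $F(x_0)=0$. The pullbacks $\eta_j = F^* dz_j$ are holomorphic $1$-forms, and $F(x) = [(\int_{x_0}^x \eta_j)_j]$. Define a $\C$-linear map $A : V \to \C^m$ by $A(\xi) = (\xi(\eta_j))_j$. Then $A(\varphi(\gamma)) = (\int_\gamma \eta_j)_j \in \Lambda'$ because $F_*\gamma$ is a cycle in $T$. Since $\Lambda'$ is closed, $A(\Delta) \subseteq \Lambda'$, so $A$ descends to a holomorphic homomorphism $\tilde F$ with $\tilde F \circ \alpha_{x_0} = F$. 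Uniqueness follows because $\alpha_{x_0}(M)$ generates $\Alb(M)$ as a group: its image spans $V$ over $\R$ by the argument in the first step. A holomorphic map of tori is affine, so it is determined on a generating set.
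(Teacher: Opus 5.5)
The paper gives no proof here, only the citation to Ueno, so the comparison is with the standard argument. Your outline follows that argument, but two steps fail as written.

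\emph{Closedness is false in general.} You assert that holomorphic $1$-forms on a compact complex manifold are closed. On the Iwasawa manifold $\Gamma\backslash H_3(\C)$ the holomorphic form $dz_3 - z_1\,dz_2$ has $d(dz_3 - z_1\,dz_2) = -dz_1\wedge dz_2 \neq 0$. Without closedness, $\int_\gamma$ is not defined on homology classes. The function $\mathrm{Re}\int_{x_0}^x\lambda$ in your maximum-principle step is not single-valued, and $\alpha_{x_0}$ depends on the path. So all three items collapse. You must either replace $\Omega^1(M)$ by the space $H^0(M,d\mathcal{O}_M)$ of closed holomorphic $1$-forms, as Ueno does, or, where the paper actually uses the result, invoke Tsukada (Proposition~\ref{Prop:Tsukada}). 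The pullbacks $F^*dz_j$ in your universal-property step are automatically closed, so that step survives this restriction.

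\emph{Item 1 is not proved for the paper's $\Delta=\overline{\varphi(H_1(M))}$.} Your argument only shows that $\Delta$ is cocompact. Whether its identity component $W$ is a complex subspace is left open, and the remark about closures of finitely generated groups is not an argument. In fact it cannot be proved in this generality. If $H^0(M,d\mathcal O_M)=\C\phi$ and the periods of $\phi$ are $\Z+\Z\sqrt2+\Z i$, then $\Delta=\R+\Z i$ and the quotient by $\Delta$ is a circle. This can be arranged on suitable non-K\"ahler solvmanifolds with $b_1=3$.

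So passing to the smallest closed complex Lie subgroup $\tilde\Delta$ containing the periods is forced. But then you are proving the statement for a different quotient, and you should say so. You must also redo the descent step. The implication ``$\Lambda'$ closed $\Rightarrow A(\Delta)\subseteq\Lambda'$'' only controls the topological closure. For $\tilde\Delta$, note that $A^{-1}(\Lambda')$ is a closed subgroup whose identity component is $\ker A$, because $\Lambda'$ is discrete. Since $\ker A$ is a complex subspace, $A^{-1}(\Lambda')$ is a closed complex Lie subgroup containing the periods, hence it contains $\tilde\Delta$.

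For this paper the cleanest route is different. For Vaisman $M$, the theorem following this proposition in Section~\ref{Sec:AlbTorus} shows that $\varphi(H_1(M))$ is already a lattice. Hence $\Delta=\tilde\Delta$ is discrete and item 1 is immediate. With closedness supplied by Tsukada, your holomorphy and universal-property arguments, including the uniqueness via the lifted image containing the periods, then go through as written.
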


	Recall that the rank of a finitely generated Abelian group $G$ is given by the number of generators, from any minimal generating set, which are not torsion elements of $G$.
	\begin{Theo}
		Let $M$ be a Vaisman manifold. Then $\Alb(M)$ is a complex torus with $\dim_\C \Alb(M) = \dim_\C H_B^{1,0}$.
	\end{Theo}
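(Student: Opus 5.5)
We will show two things: that $\Omega^1(M) = H_B^{1,0}(M)$ as complex vector spaces, and that $\varphi(H_1(M))$ is already a lattice in $(\Omega^1(M))^*$. Then $\Alb(M)$ is a compact complex torus whose dimension is $\dim_\C \Omega^1(M) = \dim_\C H_B^{1,0}$.

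\emph{Step 1: identify $\Omega^1(M)$ with $H_B^{1,0}(M)$.} By Proposition \ref{Prop:Tsukada}, every holomorphic $1$-form $\eta$ is closed and basic. Since $\eta$ is of type $(1,0)$ for $J$, and $J$ induces $\J$ on $N \cong Q$, it is a basic $(1,0)$-form with $\overline{\partial}_B \eta = 0$. In basic bidegree $(1,0)$ there are no $\overline{\partial}_B$-exact forms, since basic $(0,\bullet)$ forms of degree $-1$ vanish. Hence $H_B^{1,0}(M) = \ker \overline{\partial}_B|_{\A_B^{1,0}}$, and we get an injective map $\Omega^1(M) \to H_B^{1,0}(M)$.

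Conversely, take a $\overline{\partial}_B$-closed basic $(1,0)$-form $\eta$. Since $\iota_U\eta = \iota_V\eta = 0$, $\eta$ is a $(1,0)$-form on $M$ in the usual sense. To see it is holomorphic on $M$, work in a transversely holomorphic chart adapted to $\Sigma$. Basic forms there are pulled back from the local quotient $\overline{U}$, and $\overline{\partial}_B\eta = 0$ means $\eta$ is locally the pullback of a holomorphic form on $\overline{U}$. The local projection $U \to \overline{U}$ is holomorphic, because the leaves of $\Sigma$ are complex curves spanned by $U, V = JU$, and $U, V$ are real holomorphic. So $\eta$ is holomorphic on $M$. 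Thus $\Omega^1(M) \cong H_B^{1,0}(M) \cong \HH_B^{1,0}(M)$.

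\emph{Step 2: the lattice.} Write $h := \dim_\C H_B^{1,0}$. From the basic Hodge decomposition $H_B^1(M)_\C = H_B^{1,0} \oplus H_B^{0,1}$ with $H_B^{0,1} = \overline{H_B^{1,0}}$, we get $b_1(\Sigma) = 2h$. Proposition \ref{Prop:Kashiwada} then gives $b_1(M) = 2h+1$, with $H^1(M,\R) = H_B^1(M) \oplus \R[\theta]$.

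Now consider the real-linear map
\[
\Psi : H_1(M,\R) \to (\Omega^1(M))^*, \qquad \Psi(c)(\eta) = \int_c \eta ,
\]
which extends $\varphi$. By duality, $\ker \Psi$ consists of classes annihilated by $\operatorname{Re}\eta$ and $\operatorname{Im}\eta$ for all $\eta \in \Omega^1(M)$. These real and imaginary parts span $H_B^1(M) \subset H^1(M,\R)$, which has codimension one. Hence $\ker\Psi$ is a real line $\ell$, and $\Psi$ is surjective onto the $2h$-dimensional real space $(\Omega^1)^*$.

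The key point is that $H_B^1(M) \subset H^1(M,\R)$ is a rational subspace, i.e. it is spanned by integral classes. Granting this, $\ell$ is a rational line. Then $\Psi(H_1(M)/\mathrm{tors})$ is the image of the lattice $\Z^{2h+1}$ under the quotient by a rational line, which is a lattice of rank $2h$. So $\Delta = \varphi(H_1(M))$ is already closed and cocompact, and $\Alb(M) = (\Omega^1)^*/\Delta$ is a complex torus of dimension $h$.

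\emph{Main obstacle: rationality of $H_B^1(M)$.} I would try to prove it via the cup product with the class of $\omega_0 = -d\theta^c$. In real dimension $2n+2$, the characterization of $H_B^1$ inside $H^1$ should be as the kernel of some cohomological map defined over $\Q$. One candidate is $a \mapsto a \cup [\theta]\cup(\dots)$; another is the map whose kernel is the classes $a$ with $a \wedge \theta^c \wedge \omega_0^{n}$ integrating to zero. Neither uses only integral data, since $\theta$ need not be rational, so this may fail.

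Alternatively, I would use Proposition \ref{Prop:CompactLeaf}. Take a compact leaf $L$, which is a torus, and look at the image $\Gamma$ of $H_1(L) \to H_1(M,\R)$. Every basic form restricts to zero on $L$, so $\Gamma \subset \ker\Psi = \ell$. The pairing of $\theta$ with the flow loop of $U$ in $L$ is nonzero, because $\theta(U) = 1$. Hence $\Gamma$ contains a nonzero integral class spanning $\ell$, which shows that $\ell$ is rational without needing rationality of $H_B^1$ separately. I expect this compact-leaf argument to be the cleanest route. The remaining care is to check that the $U$-orbits inside $L$ close up, or at least that some closed loop in $L$ has nonzero $\theta$-period. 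This holds because $\theta$ restricted to $L$ is a nonzero parallel (hence harmonic) form on the flat torus $L$, so it has a nonzero period.
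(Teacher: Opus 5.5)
Your proposal is correct and follows essentially the paper's route. You identify $\Omega^1(M)$ with $H_B^{1,0}(M)$ via Tsukada, use Kashiwada's decomposition to get $b_1(M)=2h+1$ and a one-dimensional real kernel for the period map, and show this kernel line is rational using a closed loop in a compact leaf that has nonzero $\theta$-period and on which all basic forms vanish; hence $\varphi(H_1(M))$ is a rank-$2h$ lattice. The differences are only in presentation: you phrase the conclusion as ``$\Z^{2h+1}$ modulo a rational line'' instead of the paper's explicit Steps 1--2, you justify $\Omega^1(M)=H_B^{1,0}(M)$ in both directions where the paper takes it for granted, and the cup-product idea in your last paragraph is not needed.
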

	\begin{proof}
		Write $k := \dim_\C H_B^{1,0} = \dim_\C \Omega^1(M)$. Let $\{\omega_1, \dots, \omega_k\}$ be a basis of the $\C$-vector space $\Omega^1(M)$. By Proposition \ref{Prop:Kashiwada} $\rk H_1(M) = 2k +1$. Our goal is to show that $\varphi(H_1(M))$ is a lattice on $(\Omega^1(M))^*$ as a $\R$-vector space. 
		
		\textbf{Step 1}: Showing that $\rk \ker \varphi = 1$.
		
		Notice that $\varphi$ annihilates torsion elements. We need to find a non-torsion, nontrivial element of $H_1(M)$ which generates $\ker \varphi$ together with the torsion elements. Define $H_\C := H_1(M)\otimes_\Z \C$  and $H_\R := H_1(M)\otimes_\Z \R$. Observe that $H_\R$ can be considered as a subset of $H_\C$ composed by all elements $\sigma \in H_\C$ such that $\sigma = \overline{\sigma}$. Tensoring with $\R$ or $\C$ kills off torsion elements. Thus, it is enough to show that $\dim_\R \ker \varphi \otimes_Z \R = 1$. 
		
		\textbf{Step 1.1}: Finding a non-torsion nontrivial $\gamma \in H_1(M)$ such that $\gamma \in \ker \varphi$.
		
		Denote by $\Sigma$ the characteristic foliation of $M$ and $T\Sigma$ the tangent distribution. Since $M$ is compact Vaisman, there exists a compact leaf $L\subset M$ by Proposition \ref{Prop:CompactLeaf}. Consider $G = \C$ the action on $M$ given by the flows of $U$ and $V$. Fix a $p \in L$ and let $\phi_p : G \to L$ be the map $\phi_p(g) := g\bigcdot p$. Since $G$ and $L$ have the same dimension, $G$ acts transitively on $L$ and $\phi_p$ is surjective, thus $\phi_p$ is a smooth covering map. Hence, $L$ is biholomorphic to a complex torus and $G_p$ is a lattice of $G$.

		Let $\{U_0, V_0 \}\subset G_p$ be a minimal generating set of $G_p$. Then, the lines $tU_0$ and $tV_0$ are projected into circles on $L$. Now, either $\theta(U_0) \neq 0$ or $\theta(V_0) \neq 0$, otherwise $\theta$ would be zero on $T\F$ which is absurd. Suppose $\theta(U_0) \neq 0$. Since $G$ acts by isometries, $\theta(U_0) = g(U, U_0)$ which is a constant, because the flow of $U$ is given by geodesics and $U_0$ is a Killing vector field. Let $\gamma(t) := tU_0\bigcdot p$. As showed above, this is a parametrization of a circle in $M$. By abuse of notation, we denote by the same $\gamma$ the class induced in $H_1(M)$, $H_\C$ or $H_\R$. 
		
		Consider $\psi : H^1(M)_\C \to \Hom (H_1(M), \C)$ the de Rham isomorphism given by integration $\psi(\eta) := \int_{\bigcdot} \eta$. For any element $f\in \Hom (H_1(M),  \C)$ we can map it to $f_\C \in \Hom (H_\C, \C) = H_\C^*$ by defining $f_\C (\sigma \otimes a) := a f_\C(\sigma)$ for all $a \in \C$. This defines a isomorphism of $\C$-vector spaces, since any morphism $f : H_1(M) \to \C$ annihilates torsion elements. Thus, we can consider the isomorphism $\psi : H^1(M)_\C \to H_\C^*$. Hence, we have that the transpose $\psi^t : H_\C^{**} \to H^1(M)_\C^*$ is also an isomorphism. The vector space $H_\C$ is a finite dimensional, so we can identify it with $H_\C^{**}$ and assume that $\psi^t : H_\C \to H^1(M)_\C^*$. A straightforward calculation gives $b_\gamma := \psi(\theta)[\gamma] = \int_\gamma \theta \neq 0$ and $\int_\gamma \omega_i = 0$ for all $i$ since $\omega_i$ are all basic. Thus, $\gamma$ defines a nontrivial class in the homology group $H_1(M)$. Notice that, $\int_{k\gamma} = k\int_{\gamma} \neq 0$ for all $k\in\Z\setminus 0$, hence $\gamma$ does not define a torsion element in $H_1(M)$. 
		

		\textbf{Step 1.2}: Showing that $(\ker \varphi) \otimes_\Z \R = \R \gamma$.

		Indeed, let $\sigma \in H_1(M)$ be a nontrivial, non-torsion element such that $\varphi(\sigma) = 0$. Then, for any $\eta \in \Omega^1(M) = H_B^{1,0}(M)$ we get that $0 = \varphi(\sigma)[\eta] = \int_\sigma \eta = \psi^t(\sigma\otimes 1)[\eta]$. Taking the conjugate of this expression gives $0 = \int_\sigma \overline{\eta}$, which implies that $\psi^t(\sigma\otimes 1)[\eta] = 0$ for all $\eta \in H_B^{0,1}(M) = \overline{H_B^{1,0}(M)}$. Finally,
		$a := \psi^t(\sigma)[\theta] = \int_\sigma \theta\neq 0$ otherwise $\sigma$ would be either trivial or a torsion element. Thus, $\sigma = a/b_\gamma \gamma$ which is in $H_\C$. Since $a$ and $b_\gamma$ are real numbers, we conclude that $\sigma \in H_\R$.

		\begin{Claim}
			$\rk \ker \varphi = 1$.
		\end{Claim}
		Suppose $\rk \ker \varphi > 1$. Then, we can take at least two non-torsion elements $\sigma_1$ and $\sigma_2$ from a minimal generating set. Since tensoring with a field maps a minimal generating set to a basis, $\sigma_1$ and $\sigma_2$ are linearly independent. Absurd, since $\ker \varphi \otimes_\Z \R$ is one dimensional.
		
		\textbf{Step 2}: Showing that $\dim_\R( \Span_\R \varphi(H_1(M))) = 2k$.
		
		From the previous step we can write $\ker \varphi = \langle\sigma \rangle\oplus \text{Torsion}$, where $\sigma$ in a non-torsion nontrivial element of $H_1(M)$. Now, consider $\tilde{\varphi} := H \to (\Omega^1(M))^*$ the induced group isomorphism on the quotient space $H := H_1(M)\slash (\ker \varphi)$. Since $H_1(M)$ is a finitely generated Abelian group, then so is $H$. Notice that $H$ is torsion free, thus we can consider a minimal generating set $\{\overline{\sigma_1}, \dots, \overline{\sigma_{2k}}\}$ for $H$, where $\sigma_j \in H_1(M)$ for all $j$. Finally, let $a_1, \dots, a_{2k} \in \R$ such that $f := a_1\tilde{\varphi}(\overline{\sigma_1}) + \cdots + a_{2k}\tilde{\varphi}(\overline{\sigma_{2k}}) = 0$. Hence, for all $i$
		\begin{align*}
			0 = f(\omega_i) & = a_1\int_{\sigma_1} \omega_i 	+ \cdots + a_{2k} \int_{\sigma_{2k}} \omega_i\\
			& = \psi^t(a_1 \sigma_1 + \cdots + a_{2k} \sigma_{2k})[\omega_i]
		\end{align*}
		By taking the conjugate of the above expression we obtain that $\psi^t(a_1 \sigma_1 + \cdots + a_{2k} \sigma_{2k})[\overline{\omega_i}] = 0$ for all $i$. If $ a_1 \sigma_1 + \cdots + a_{2k} \sigma_{2k}$ is not trivial in $H_\R$, then $c := \psi^t(a_1 \sigma_1 + \cdots + a_{2k} \sigma_{2k})[\theta] \neq 0$, thus $a_1 \sigma_1 + \cdots + a_{2k} \sigma_{2k} = c/b_\gamma \gamma$ in $H_\R$. This is impossible since $\{\sigma_1,\dots,\sigma_{2k}, \gamma\}$ form a basis for $H_\R$ by construction. Thus, $a_1 \sigma_1 + \cdots + a_{2k} \sigma_{2k} = 0$ in $H_\R$. By passing to $H \otimes_\Z \R$ we obtain that $a_j = 0$ for all $j$.

		
	%
	\end{proof}
	
	In our context, since $\varphi(H_1(M))$ defines a lattice on $(\Omega^1(M))^*$ this allows us to compute the differential of the Albanese map directly.
	\begin{Prop}
		Let $M$ be a Vaisman manifold. Then, $d(\alpha_{x_0})_{x}[v] = \iota_v|_{\Omega^1}$.
	\end{Prop}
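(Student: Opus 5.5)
The claim is that the differential of the Albanese map at $x$ sends $v\in T_xM$ to the functional $\eta\mapsto \eta_x(v)$ on $\Omega^1(M)$, viewed in $(\Omega^1(M))^*$ and then pushed to the tangent space of the quotient $\Alb(M)=(\Omega^1(M))^*/\Delta$. I would compute this by working locally. The projection $\pi:(\Omega^1(M))^*\to \Alb(M)$ is a covering map, because the preceding theorem shows that $\varphi(H_1(M))$ is a lattice, so $\Delta=\varphi(H_1(M))$ is discrete. Hence $T_{[\xi]}\Alb(M)$ is canonically identified with $(\Omega^1(M))^*$ through $d\pi$, and it suffices to exhibit a local lift of $\alpha_{x_0}$ near $x$ into $(\Omega^1(M))^*$ and differentiate that lift.

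Concretely, I would fix a path $c$ from $x_0$ to $x$ and a contractible coordinate neighbourhood $W$ of $x$. For $y\in W$, set
\[
\tilde\alpha(y)(\eta):=\int_c\eta+\int_{x}^{y}\eta ,
\]
where the second integral is taken along any path inside $W$. This is well defined because every holomorphic $1$-form is closed by Proposition \ref{Prop:Tsukada}, so by the Poincar\'e lemma on $W$ the integral does not depend on the path. The resulting map is smooth, and $\pi\circ\tilde\alpha=\alpha_{x_0}$ on $W$, since changing the path from $x_0$ only changes the value by an element of $\varphi(H_1(M))$.

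To differentiate, I choose a curve $y(t)$ with $y(0)=x$ and $y'(0)=v$, and integrate along the curve itself. This gives
\[
\frac{d}{dt}\Big|_{t=0}\tilde\alpha(y(t))(\eta)=\frac{d}{dt}\Big|_{t=0}\int_0^t\eta_{y(s)}(y'(s))\,ds=\eta_x(v)
\]
for each $\eta$. Since $\Omega^1(M)$ is finite dimensional, checking this on a basis $\omega_1,\dots,\omega_k$ suffices, and it yields $d(\alpha_{x_0})_x[v]=\iota_v|_{\Omega^1}$ under the identification of the first paragraph.

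The only step needing care is this identification of tangent spaces, which rests on $\Delta$ being discrete. That is exactly where the lattice result of the preceding theorem is used. Without it, $\Delta$ could be a closure that is not discrete, and the quotient would have a smaller tangent space.
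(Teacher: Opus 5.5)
Your proof is correct and follows essentially the same route as the paper. Both arguments lift $\alpha_{x_0}$ locally to $(\Omega^1(M))^*$ using a fixed path from $x_0$ plus a local integral, which is well defined because holomorphic $1$-forms are closed (Proposition \ref{Prop:Tsukada}). Both then differentiate along a curve through $x$, and identify the tangent space of $\Alb(M)$ with $(\Omega^1(M))^*$ using that $\varphi(H_1(M))$ is a lattice. The only cosmetic difference is that the paper writes $\omega_i|_U = df_i$ with local holomorphic primitives and differentiates $f_i\circ\gamma$, while you differentiate the line integral directly, which is the same computation.
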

	\begin{proof}
		Fix a point $p \in M$ and let $v\in T_pM$. Take $\gamma$ to be a small curve in $M$ such that $\gamma(0) = p$ and $\dot{\gamma}(0) = v$. Fix a smooth path $\sigma$ connecting $x_0$ and $p$. Then,		
		$$\alpha_{x_0} \circ \gamma(t) = \left[ \int_\sigma  \right] + \left[ \int_{\gamma(0)}^{\gamma(t)} \right].$$
		Consider $\omega_1, \dots, \omega_k$ a basis of $\Omega^1(M)$ and denote by $F_1, \dots, F_k$ the dual basis. By taking a small open simply-connected neighborhood $U$ of $p$, we can suppose that for all $i$, $\omega_i|_U = df_i$ for some holomorphic function $f_i : U \to \C$, because $\omega_i$ are closed by Proposition \ref{Prop:Tsukada}. Then,
		$$ \int_p^{\gamma(t)}  = \sum_i \left(\int_p^{\gamma(t)} \omega_i \right) F_i = \sum_i ((f_i\circ \gamma(t) - f_i (p)) F_i) \in (\Omega^1(M))^*.$$
		This implies that
		$$ \left.\frac{d}{dt}\right|_{t = 0}  \int_p^{\gamma(t)}  = \sum_i \omega_i(v) F_i.$$
		By identifying locally at $\alpha_{x_0}(p)$, $\Alb (M)$ with $(\Omega^1(M))^*$, we obtain that  $d(\alpha_{x_0})_{x}[v] = \iota_v|_{\Omega^1}$.
	\end{proof}
	
	\section{Deformation of Vaisman Structures}\label{DefOfVaismSection}
	
	Let $M$ be a connected and compact smooth manifold. Consider the set of all complex structures on $M$	
	$$\mathcal{C} := \{ J \in \Gamma\End (TM) \, | \, J \text{ is a complex structure}\}.$$
	
	\begin{Def}
		Endow $\mathcal{C}$ with the subspace topology from $\Gamma\End (TM)$. We say that $J'$ is a {\em deformation in the large} of a complex structure $J \in \mathcal{C}$ if $J'$ is in the same connected component of $J$.
	\end{Def}
	 
	 Let $(M, J, g)$ be a Hermitian manifold and $(\F, g)$ a Riemannian foliation with $g$ bundle-like. If $J$ preserves $T\F$, then $J$ also preserves $N := T\F^\perp$. The complex structure induces a transverse complex structure $\J$ on $Q$ by taking $\J \, \bar{X} := \overline{JX}$. Indeed, first observe that this is well-defined, since taking $\overline{X} = \overline{Y}$ implies that $X - Y \in T\F \implies J(X-Y) \in T\F \implies \overline{J(X-Y)} = 0$. Through the identification $Q \simeq N$, we can consider $J_N:= \J = J|_N$. In this case we can consider the Nijenhuis tensor, $N_{J_N} := (N_J|_N)^\perp$ of $J_N$. Given any foliated chart $U$, $J_N$ descends to an almost complex structure on $\overline{U}$. In this case, $N_{J_N}$ descends to the usual Nijenhuis tensor of an almost complex structure. However, $N_J = 0$ so that $N_{J_N} = 0$, which implies that $J_N$ is indeed a transverse complex structure.
	 
	 Let $(M, J, g, \theta)$ be a Vaisman manifold. We define {\em Vaisman deformations} in the sense of Ornea-Slesar \cite{OrnearSlesar} of these structures in the following. Define the set 
	 $$\mathcal{V}(J, g, \theta) := \{\text{Vaisman structures }(J', g', \theta') \,|\, \theta' = \theta, U' = U, V' = V \text{ and } \J' = \J\}.$$
	 This is the Vaisman version of type II deformations of Sasakian structures as described in \cite[Section 7.5.1]{BoyerGalicki}. In \cite{OrnearSlesar} the authors shows that $\mathcal{V}$ above is always nontrivial as long as a non-zero global basic smooth function $\varphi$ exists on $M$.

	Consider a Vaisman structure $(J', g', \theta') \in \mathcal{V}(J, g, \theta)$. Write $\zeta := \theta'^c - \theta^c \in \A_B^1(M)$. We want to find an expression of $J'$ in terms of $J$. Notice that $\bar{J'}\bar{X} - \bar{J}\bar{X} = 0 \implies \overline{J'X - JX} = 0 \implies J'X - JX = aU+bV$ for some $a, b \in \R$. By a straightforward calculation we obtain that  
	 \begin{itemize}
	 	\item $\theta(J'X - JX) = - \zeta(X)$,
	 	\item $\theta^c(J'X - JX) = - \zeta(J X)$.
	 \end{itemize}
	 Now, $\theta(X) = g'(X, U)$ and $\theta'^c(X) = g'(X, V)$ yields $\theta(J'X - JX) = a$ and $\theta'^c(J'X - JX) = b$. Thus,
	 \begin{equation}\label{eqn:J}
	 	 J' = J  - \zeta \otimes U  - (\zeta \circ J )\otimes V. \tag{$*$}
	 \end{equation}
	For any basic $(1,1)$-form $\alpha$, we say that $\alpha$ is positive-definite with respect to $J$ if $\alpha(X, JY) > 0$ for all $X, Y \in N$. We denote it by $\alpha > 0$ w.r.t. $J$ (on $N$).

	 \begin{Prop}\label{VaismanDefProp}
	 	Consider the set 
	 	$$C(M, J, \omega_0) := \{ \zeta \in \mathcal{A}^1_B(M, \R) \,|\, \omega_0 - d\zeta > 0 \text{ w.r.t }J \}.$$
	 	Then, we have a bijection $C(M, J, \omega_0) \to \mathcal{V}(J, g, \theta)$.
	 \end{Prop}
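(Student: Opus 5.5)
The bijection I would build sends $\zeta \in C(M,J,\omega_0)$ to the triple $(J',g',\theta)$ given as follows. The complex structure is
\[
J' := J - \zeta\otimes U - (\zeta\circ J)\otimes V,
\]
which is forced by (\ref{eqn:J}). The new Lee form is $\theta' := \theta$, and its $J'$-dual is $\theta'^c := \theta^c + \zeta$. The fundamental form is
\[
\omega' := -d\theta'^c + \theta\wedge\theta'^c = \omega_0 - d\zeta + \theta\wedge(\theta^c+\zeta),
\]
and the metric is $g' := \omega'(\bigcdot, J'\bigcdot)$, up to the sign convention fixed by $\omega = g(J\bigcdot,\bigcdot)$. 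In the reverse direction, a structure $(J',g',\theta)\in\mathcal{V}(J,g,\theta)$ is sent to $\zeta := \theta'^c - \theta^c$.

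The first step is to check that the inverse map lands in $C(M,J,\omega_0)$. Both $\theta^c$ and $\theta'^c$ vanish on $U$ and equal $1$ on $V$; for $\theta'^c$ this is because $U'=U$ and $V'=V$ are $g'$-orthonormal, which follows from $|\theta|_{g'}=1$ and $J'U=V$. Hence $\iota_U\zeta=\iota_V\zeta=0$. Since $U,V$ are Killing for both metrics and $\mathcal{L}_U\theta^c = \mathcal{L}_V\theta^c = 0$ (and likewise for the primed forms), we get $\mathcal{L}_U\zeta=\mathcal{L}_V\zeta=0$, so $\zeta$ is basic. By Vaisman's theorem applied to both structures, $\omega'_0 = -d\theta'^c = \omega_0 - d\zeta$. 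This is the transverse Kähler form of the new structure, which is positive with respect to $\J'=\J$ on $N$. So $\zeta\in C$, and (\ref{eqn:J}) shows that $\zeta$ determines $J'$.

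The second step is to check that the forward map lands in $\mathcal{V}(J,g,\theta)$.
\begin{enumerate}[label=(\roman*)]
\item $J'^2=-\Id$: this follows from $J'U=V$, $J'V=-U$, and the fact that $\zeta$ vanishes on $U,V$.
\item $J'$ is integrable: I would show $N_{J'}=0$ by working in a frame of $U$, $V$ and foliated vector fields. Transversally $J'$ induces $\J$, which is integrable. The remaining terms involve $[X,\zeta(Y)U]$-type brackets and reduce to $d\zeta$ having type $(1,1)$, which I still need to establish. I expect this to be the main obstacle. A cleaner route is to observe that $d\zeta = \omega_0 - \omega'_0$ should be $J$-invariant, but $J$-invariance of $d\zeta$ is not built into the definition of $C$. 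The positivity condition ``$\omega_0-d\zeta>0$ w.r.t. $J$'' as stated implicitly requires $d\zeta$ to be a $(1,1)$-form (the paper defines positivity only for basic $(1,1)$-forms), so I would read that requirement into $C$ and use it here.
\item $g'$ is a $J'$-Hermitian Riemannian metric: $J'$-invariance of $\omega'$ and positivity come from splitting $TM = N\oplus\Span\{U,V\}$. On $N$ the form $\omega'$ agrees with the positive form $\omega_0-d\zeta$. On $\Span\{U,V\}$ it is $\theta\wedge\theta'^c$ with $\theta'^c(V)=1$, and the cross terms cancel by construction of $J'$.
\item The Vaisman conditions: $d\omega' = \theta\wedge\omega'$ holds because $d\theta=0$ and $d(\theta\wedge\theta'^c) = -\theta\wedge d\theta'^c$. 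For $\nabla^{g'}\theta = 0$ I would use that $U$ is $g'$-unit and Killing for $g'$. It is Killing because $\mathcal{L}_U\omega'=0$, $\mathcal{L}_UJ'=0$, and $\zeta$ is basic. A unit Killing field whose dual form is closed is parallel.
\end{enumerate}
Since $U'=U$, $V'=J'U=V$ and $\J'=\J$ by construction, the structure lies in $\mathcal{V}$.

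The last step is to check that the two maps are mutually inverse. The forward map produces $\theta'^c = \theta^c+\zeta$, so the inverse recovers $\zeta$. Conversely, a Vaisman structure with Lee form $\theta$ is determined by $J'$ and $\theta'^c$ through Vaisman's formula $\omega' = -d\theta'^c + \theta\wedge\theta'^c$, and $J'$ is determined by $\zeta$ via (\ref{eqn:J}).
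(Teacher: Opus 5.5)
Your construction is the paper's, which follows Ornea--Slesar. You take $J'$ from (\ref{eqn:J}), set $\theta'^c=\theta^c+\zeta$, read the metric off $\omega'=-d\theta'^c+\theta\wedge\theta'^c$, and invert by $\zeta=\theta'^c-\theta^c$. Where the paper cites Ornea--Slesar for the Vaisman condition, you check $d\omega'=\theta\wedge\omega'$ and $\nabla^{g'}\theta=0$ directly via $\Lie_U g'=0$, which is fine.

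Building the $(1,1)$ condition on $d\zeta$ into $C$ is the right reading. The paper defines positivity only for $(1,1)$-forms, and the condition cannot be recovered from positivity alone: the $(2,0)+(0,2)$ part $\beta$ of any real $2$-form satisfies $\beta(v,Jv)=0$. The paper's polarization step tacitly assumes that $(\omega_0-d\zeta)(\bigcdot,J\bigcdot)$ is symmetric, which is the same as $J$-invariance.

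With that assumption, step (ii) closes quickly. Use the convention $\alpha\circ J'=i\alpha$. The $J'$-$(1,0)$-forms are spanned locally by $\theta+i\theta'^c$ and the $dz^j$ of transverse holomorphic coordinates; the $dz^j$ stay of type $(1,0)$ because $J'-J$ takes values in $T\Sigma$. The form $d(\theta+i\theta'^c)=-i(\omega_0-d\zeta)$ is basic and has no $(0,2)$-part exactly when $d\zeta$ is $J$-invariant.

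Step (iii) is wrong as written: the cross terms do not cancel.
\begin{enumerate}
\item For $X\in N$ you have $g'(X,U)=0$, but $g'(X,V)=\theta'^c(X)=\zeta(X)$.
\item For $X,Y\in N$ you have $g'(X,Y)=(\omega_0-d\zeta)(X,JY)+\zeta(X)\zeta(Y)$, because $J'Y=JY-\zeta(Y)U-\zeta(JY)V$ does not lie in $N$.
\end{enumerate}
So $N$ is not $g'$-orthogonal to $T\Sigma$; the new normal bundle is $\ker\theta\cap\ker\theta'^c$. Positivity on $N$ and on $\Span\{U,V\}$ separately therefore does not give positivity on $TM$.

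The repair is the global identity
\[
g'=(\omega_0-d\zeta)(\bigcdot,J\bigcdot)+\theta\otimes\theta+\theta'^c\otimes\theta'^c .
\]
It follows from $\theta\circ J'=-\theta'^c$, $\theta'^c\circ J'=\theta$, and the basicness of $\omega_0-d\zeta$. For $v=x+aU+bV$ with $x\in N$ it gives $g'(v,v)=(\omega_0-d\zeta)(x,Jx)+a^2+(b+\zeta(x))^2$, which vanishes only for $v=0$.

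The paper's own proof makes the same slip: it reduces to $N$ and writes $\omega'(v,J'v)=\omega'(v,Jv)$, dropping a $\zeta(v)^2$ term. This is a local repair, not a change of strategy.
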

	 \begin{proof}
	 	We follow the work of Ornea and Slesar in \cite{OrnearSlesar} (with a different sign convention. See Remark \ref{SignConvRem}). Let $\zeta$ be a basic $1$-form. Define $J'$ by the expression \ref{eqn:J}. Consider $\theta'^c := J'\theta = \theta^c + \zeta$ and define $g' := -d\theta'^c(\bigcdot, J\bigcdot) - d\zeta(\bigcdot, J\bigcdot)$. The authors then impose along the way some sufficient conditions on $\zeta$ to guarantee that the deformed structure $(J', g', \theta)$ is in $\mathcal{V}(J, g, \theta)$. A careful reading shows that the main issues are to ensure that $J'$ is a complex structure and $g'$ a Riemannian metric. If this holds, expressions (3.8) and (3.10) in \cite{OrnearSlesar} implies that $g'$ is a Vaisman metric. 
	 	
	 	Now, assume that $\zeta \in C(M, J, \omega_0)$. By \cite[Proposition 3.2]{OrnearSlesar}, for $J'$ defined by \ref{eqn:J} be a complex structure, it suffices for $d\zeta$ to be a $(1,1)$-form with respect to $J$. Since $\omega_0' := \omega_0 - d\zeta > 0$ with respect to $J$, we have that for any non-zero $v \in N$, $\omega_0'(v, Jv) = \omega_0(v, Jv) - d\zeta(v, Jv) >0$. Define $g_0'(u, v) := \omega_0'(u, Jv)$ which is non-degenerate on $N$. Then, $g_0'(Jv, Jv) = -\omega_0'(Jv,v) = \omega_0'(v, Jv) = g_0'(v,v)$. The polarization identity yields 
	 	$$\omega_0(Ju, Jv) - d\zeta(Ju, Jv) = \omega_0'(Ju, Jv) = \omega_0'(u, v) = \omega_0(u, v) - d\zeta(u,v).$$ 
	 	We conclude that $d\zeta(Ju, Jv) = d\zeta(u,v)$. Since $\zeta$ is basic, we get that $Jd\zeta = d\zeta$ trivially on $T\F$.

	 	To show that $g'$ is Riemannian it only remains to show that $g'$ is positive-definite. We have that $g'(U, U) = g'(V, V) = 1 $. Independently of the structure considered, we always have the vector bundle decomposition $TM = T\F \otimes N$. Thus, it suffices to verify that $g'$ is positive-definite on $N$. Let $v\in N$ and calculate
	 	\begin{align*}
	 		g'(v,v) & =  \omega'(v, J'v) = \omega'(v,Jv)\\
	 		 & =  \omega_0'(v, Jv) + \theta\wedge\theta'^c(v,Jv) = \omega_0'(v,Jv)\\
	 		 & =  \omega_0(v,Jv) - d\zeta(v,Jv) > 0.
	 	\end{align*}
	 	Therefore, $(J', g', \theta) \in \mathcal{V}(J, g, \theta)$. 
	 	
	 	On the other hand, given $(J', g', \theta) \in \mathcal{V}(J, g, \theta)$ define $\zeta := \theta'^c-\theta^c$. We obtain that $\omega_0' = - d\theta'^c =  -d\theta'^c - d\zeta = \omega_0 - d\zeta$, which is positive-definite on $N'$ w.r.t. $J'$. Consequently, $\omega_0'$ is also positive-definite on $N$, since for any non-zero vector $v \in N$, we have the decomposition $v = v' + aU +bV$ with $v' \neq 0$ and $\omega_0'(v, J'v) = \omega_0'(v', J'v') > 0$. Hence, $\zeta \in C(M, \omega_0)$.
	 
	 \end{proof}
	 
	 Given $(J', g', \theta) \in  \mathcal{V}(J, g, \theta)$ write $\omega_1 := \omega_0'$. Consider $\zeta := \theta'^c - \theta^c$. Taking $\zeta_t := t\zeta$ for $t \in [0,1]$ yields $\omega_0 - td\theta = \omega_0 - t\omega_0 + t\omega_1 = t\omega_1 + (1-t)\omega_0$, which is positive-definite w.r.t. $J$ on $N$. By the above proposition we obtain a $1$-parameter family of Vaisman structures $(J_t, g_t, \theta)$ where $J_t$ defines a smooth $1$-parameter family of complex structures. In particular, $J_1$ and $J_0$ are in the same connected component of $\mathcal{C}$. We can apply Rollenske's result on \cite[Theorem 4.3]{Rollenske} to obtain the following.

	 \begin{Theo}\label{LeftInvDef}
	 	\mbox{}\
	 	\begin{enumerate}[label=(\arabic*)]
	 		\item Every complex structure $J$ given by a Vaisman deformation of $(J_0, g, \theta)$ is a deformation in the large of $J_0$.
	 		\item Let $M$ be a Kodaira-Thurston manifold with a Vaisman structure $(J, \omega, \theta)$ with left-invariant $J$. Then, the complex structure of every Vaisman deformation of $(J, \omega, \theta)$ is left-invariant. 
	 	\end{enumerate}
	 \end{Theo}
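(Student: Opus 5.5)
For part (1), the plan is to connect $J_0$ to $J$ by an explicit continuous path inside $\mathcal{C}$, using the parametrization of Proposition \ref{VaismanDefProp}. Let $(J, g', \theta) \in \mathcal{V}(J_0, g, \theta)$ be a Vaisman deformation. Set $\zeta := \theta'^c - \theta^c \in \A^1_B(M)$; by the converse direction of Proposition \ref{VaismanDefProp}, $\zeta \in C(M, J_0, \omega_0)$ and $\omega_1 := \omega_0 - d\zeta > 0$ w.r.t. $J_0$ on $N$.

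For $t \in [0,1]$, put $\zeta_t := t\zeta$. Then
\[
\omega_0 - d\zeta_t = (1-t)\omega_0 + t\omega_1 .
\]
This is a convex combination of two basic $(1,1)$-forms that are positive-definite on $N$ w.r.t. $J_0$, so it is again positive-definite. Hence $\zeta_t \in C(M, J_0, \omega_0)$ for every $t$. Proposition \ref{VaismanDefProp} then produces Vaisman structures $(J_t, g_t, \theta)$ with
\[
J_t = J_0 - t\,\zeta \otimes U - t\,(\zeta\circ J_0)\otimes V
\]
by formula \eqref{eqn:J}. This expression is affine in $t$, so $t \mapsto J_t$ is continuous (indeed smooth) into $\Gamma\End(TM)$ with the relevant topology. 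It lies in $\mathcal{C}$ because each $J_t$ is integrable. It satisfies $J_t|_{t=0}=J_0$ and $J_t|_{t=1}=J$, where the last equality uses injectivity of the bijection in Proposition \ref{VaismanDefProp}: the structure determined by $\zeta$ is the given one. So $J$ lies in the path component of $J_0$, which gives (1).

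For part (2), the plan is to use (1) together with Rollenske's rigidity theorem \cite[Theorem 4.3]{Rollenske}. That theorem says that for nilmanifolds $\Gamma\backslash G$ satisfying his hypotheses, any deformation in the large of a left-invariant complex structure is again left-invariant. The steps are:
\begin{enumerate}[label=(\alph*)]
\item Let $M = \Gamma\backslash(\R\times H_{2n+1})$ with $J$ left-invariant, and let $J'$ be the complex structure of a Vaisman deformation. By (1), $J'$ is a deformation in the large of $J$.
\item Check that $(\g, J)$ satisfies the hypotheses of Rollenske's result, then apply it to conclude that $J'$ is left-invariant.
\end{enumerate}

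The main obstacle is verifying those hypotheses for an arbitrary left-invariant complex structure $J$ on the Kodaira–Thurston Lie algebra $\g = \R\oplus\mathfrak{h}_{2n+1}$. As I recall, Rollenske requires $J$ to be compatible with a suitable (stable) torus-bundle / ascending filtration structure, and the conclusion then holds because the relevant Dolbeault cohomology is computed by invariant forms. Two routes are available:
\begin{itemize}
\item Show directly that $J$ preserves the center $\mathfrak{z}(\g) = \Span\{T, Z\}$. Then $M \to \Gamma\backslash(\R\times H_{2n+1})/Z(G)$ is a principal holomorphic torus bundle over a complex torus with left-invariant structures, which is exactly the iterated-bundle situation Rollenske covers.
\item Observe that $\g$ is $2$-step with one-dimensional commutator $[\g,\g] = \R Z$, so the ascending central series is short. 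Then check that $J\mathfrak{z}=\mathfrak{z}$, using that $J[\g,\g] \subset \mathfrak{z}$ for any complex structure on such an algebra.
\end{itemize}
I would try the second route first: show $JZ \in \mathfrak{z}$ from integrability, which forces $J\mathfrak{z} = \mathfrak{z}$. Then invoke \cite[Theorem 4.3]{Rollenske} verbatim.
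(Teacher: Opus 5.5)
Your proposal is correct and follows essentially the paper's argument. For (1) you use the same linear path $\zeta_t = t\zeta$ in $C(M,J_0,\omega_0)$ together with formula \eqref{eqn:J}, and for (2) you combine (1) with Rollenske's Theorem 4.3; your extra check that every complex structure on $\R\oplus\mathfrak{h}_{2n+1}$ preserves the center is a hypothesis verification the paper leaves implicit. That check follows from $N_J(Z,\cdot)=0$, $[\g,\g]=\R Z$ and the linear independence of $Z$ and $JZ$.
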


	\section{Vaisman Manifolds with Basic Harmonic 1-Forms of Constant Length}
	
	For a Riemannian manifold $(M, g)$ denote by $\hat{g} : TM \to T^*M$ the canonical isomorphism. Let $(M, \F, g)$ be a compact connected Riemannian foliated manifold. Denote by $\hat{g} : TM \to T^*M$ the canonical isomorphism. Set $\mathfrak{h}_B := \hat{g}^{-1}(\HH_B^1(M))$ the space of basic harmonic fields. Then, $\h_B$ is a finite dimensional vector space with $\dim_\R \h_B = b_1(\F)$. 
	
	Let $\alpha$ be a basic $1$-form. Then $A := \hat{g}^{-1}(\alpha) \in \Gamma N$, since $g(A, X) = \alpha(X) = 0$ for any $X \in T\F$. This implies that $\alpha = g(A,\bigcdot) = g_0(A, \bigcdot)$. 
	
	\begin{Claim}
		$A$ is a foliated vector field.
	\end{Claim}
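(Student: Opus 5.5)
To prove the Claim we must show that for every section $X$ of $T\F$, the bracket $[X, A]$ is again a section of $T\F$. Since $A\in\Gamma N$, this is the same as $\Pi[X,A]=0$. Equivalently, for every $Y\in\Gamma N$ we want $g([X,A],Y)=0$.

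The idea is to use the remark made after the definition of bundle-like metrics. Since $g$ is bundle-like, $g_N=g_0=g_Q\circ\Pi^{\otimes 2}$ is holonomy invariant, so $\Lie_X g_N=0$ for all $X\in\Gamma T\F$. Because $\alpha$ is basic, we also have $\Lie_X\alpha=0$ and $\iota_X\alpha=0$. Moreover $\alpha=g_N(A,\cdot)$, as observed just before the Claim.

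Fix $X\in\Gamma T\F$ and an arbitrary vector field $Y$, and differentiate $\alpha(Y)=g_N(A,Y)$ along $X$ using the Leibniz rule for Lie derivatives of tensors:
\[
0=(\Lie_X\alpha)(Y)=X\bigl(g_N(A,Y)\bigr)-g_N(A,[X,Y]) = (\Lie_X g_N)(A,Y)+g_N([X,A],Y).
\]
The term $(\Lie_X g_N)(A,Y)$ vanishes by holonomy invariance. Hence $g_N([X,A],Y)=0$ for all $Y$.

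Now $g_N$ is nondegenerate on $N$ and kills exactly $T\F$. Taking $Y=[X,A]^\perp$ gives $|[X,A]^\perp|^2=0$, so $[X,A]\in\Gamma T\F$. Thus $A$ is foliated.

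The only delicate point is justifying $\Lie_X g_N=0$ when $g_N$ is evaluated on arbitrary vector fields rather than on foliated ones. This is exactly the content of the definition $(\Lie_X T)(\Pi X_1,\dots)=(\Lie_X\tilde T_0)(X_1,\dots)$ together with holonomy invariance of $g_Q$, and the paper has already recorded it. If one prefers to work locally, one can instead take foliated coordinates $(x,y)$ with $T\F$ spanned by $\partial_x$. There the coefficients of $g_N$ in $dy^i\otimes dy^j$ and the coefficients $\alpha_i$ of $\alpha=\sum_i\alpha_i\,dy^i$ are independent of $x$. Then $A$ has normal-projected components $g_N^{ij}\alpha_j$, which are independent of $x$, and this is precisely the statement that $A$ is foliated.
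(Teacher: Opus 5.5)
Your proof is correct and is essentially the paper's argument. Both differentiate $\alpha(Y)=g_N(A,Y)$ along $X\in\Gamma T\F$, use $\Lie_X\alpha=0$ together with the holonomy invariance $\Lie_X g_N=0$ to get $g_N([X,A],\cdot)=0$, and conclude $[X,A]\in\Gamma T\F$; your choice $Y=[X,A]^\perp$ only spells out the nondegeneracy step the paper leaves implicit, and your local-coordinate remark is an equivalent reformulation.
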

	Indeed, for all $X \in T\F$ and $Y\in TM$ 
	$$0 = (\Lie_X \alpha)(Y) = X(\alpha(Y)) - \alpha([X,Y]).$$
	Hence, 
	\begin{align*}
		g(A,[X,Y]) & =  \alpha([X,Y]) = X(\alpha(Y)) = Xg_0(A,Y)\\
		& = (\Lie_X g_0)(A,Y) + g_0([X,A],Y) + g_0(A,[X,Y]).
	\end{align*}
	Therefore, $g_0([A,X],Y) = (\Lie_X g_0) (A, Y) = 0$ since $g_Q$ is holonomy invariant. Thus, $[A,X] \in T\F$.

	\begin{Def}
		Let $(M,\F, g)$ be a Riemannian foliated manifold. We say that a basic form $\alpha$ has constant length if $|\alpha|_g$ is a constant map, and that $(M, \F, g)$ has {\em basic harmonic} $1${\em-forms of constant length} if \textbf{all} forms $\alpha \in \HH_B^1(M)$ have constant length.
	\end{Def}
	
	\begin{Lemma}\label{PointwiseLILemma}
		Suppose $(M,\F, g)$ is a compact connected Riemannian foliated manifold with basic harmonic $1$-forms having constant length. Then, for any basic forms $\alpha_1, \alpha_2 \in \HH^1(M)$, we have that $g(\alpha_1, \alpha_2) = g(A_1, A_2)$ is a constant map on $M$.
		
		In particular, a basic harmonic vector field $A\neq 0$ is nowhere zero and, moreover, given an orthonormal basis $\{\alpha_1, \dots, \alpha_{b_1(\F)}\}$ with respect to the inner product defined on $\HH_B^1(M)$, the dual basis  $\{A_1, \dots, A_r\}$ defines a pointwise linearly independent orthonormal set of vector fields. 
	\end{Lemma}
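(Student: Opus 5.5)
The plan is to polarize. Since $\HH_B^1(M)$ is a real vector space, for $\alpha_1,\alpha_2 \in \HH_B^1(M)$ the form $\alpha_1+\alpha_2$ is again basic harmonic. By hypothesis the functions $|\alpha_1|_g^2$, $|\alpha_2|_g^2$ and $|\alpha_1+\alpha_2|_g^2$ are all constant on $M$. The pointwise identity
\[
g(\alpha_1,\alpha_2) = \tfrac{1}{2}\left(|\alpha_1+\alpha_2|_g^2 - |\alpha_1|_g^2 - |\alpha_2|_g^2\right)
\]
then shows that $g(\alpha_1,\alpha_2)$ is constant. Because $\hat{g}$ is an isometry, $g(\alpha_1,\alpha_2)=g(A_1,A_2)$ pointwise, which gives the first assertion.

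For the second assertion, suppose $A\neq 0$ is basic harmonic with dual form $\alpha$. Then $|A|_g^2 = |\alpha|_g^2$ is a constant $c$. If $c=0$, then $\alpha$ vanishes identically, contradicting $A\neq 0$. So $c>0$ and $A$ vanishes nowhere.

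For the orthonormality statement, the one point that needs care is how the global inner product on $\HH_B^1(M)$ relates to the pointwise one. The global inner product is
\[
(\alpha_i,\alpha_j) = \int_M \alpha_i\wedge *_B\alpha_j\wedge\chi = \int_M g(\alpha_i,\alpha_j)\,\nu\wedge\chi = \int_M g(\alpha_i,\alpha_j)\, dV_g,
\]
using $\omega\wedge *_B\eta = g(\omega,\eta)\nu$ and $dV_g=\nu\wedge\chi$. Since the integrand is constant, $(\alpha_i,\alpha_j) = \mathrm{Vol}(M)\, g(\alpha_i,\alpha_j)$. Hence an orthonormal basis for $(\cdot,\cdot)$ is, after the harmless rescaling by $\mathrm{Vol}(M)^{-1/2}$ (or, if one normalizes the volume to one, directly), pointwise orthonormal: $g(A_i,A_j)=\delta_{ij}$ at every point, up to that constant factor. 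Pointwise linear independence follows, since a pointwise orthogonal family of nonzero vectors is linearly independent at each point.

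I expect no real obstacle. The only subtle points are the normalization constant $\mathrm{Vol}(M)$ and the fact that the $A_i$ lie in $N$, so their $g$-inner products agree with $g_0$.
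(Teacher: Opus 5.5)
Your proposal is correct and takes the same route as the paper, whose proof is simply an appeal to the polarization identity applied to the constant functions $|\alpha_1|_g^2$, $|\alpha_2|_g^2$ and $|\alpha_1+\alpha_2|_g^2$. Your extra observation is a correct refinement that the paper leaves implicit: $(\alpha_i,\alpha_j)=\mathrm{Vol}(M)\,g(\alpha_i,\alpha_j)$, so a basis orthonormal for the global inner product is pointwise orthonormal only up to the constant factor $\mathrm{Vol}(M)^{-1}$, and this does not affect pointwise linear independence.
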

	\begin{proof}
		This follows from the usual polarization identity.
	\end{proof}
	
	\begin{Rem}\label{Rem:CohLim}
		In the lemma above, any orthonormal basis of $\HH_B^1(M)$ gives rise to a basis of $\h_B$ which corresponds to a set of pointwise linearly independent global sections of the normal bundle $N$. This sets a cohomology constraint on $(M, \F)$, namely $b_1(\F) \leq \codim \F$.   
	
	\end{Rem}

	Now, consider $(M, J, \omega, \theta)$ a Vaisman manifold and take $X, Y \in \Gamma N$ local foliated vector fields. Recalling that $U = \theta^{\#}$ and $V = JU = \hat{g}^{-1}(\theta ^c)$, by the above discussion, $g(Z, X) = 0$ for any $Z \in T\Sigma$. We calculate:
	\begin{align*}
			0 & =  d\theta(X, Y) = X\theta(Y) - Y\theta(X) - \theta([X,Y])\\
			 & =  Xg(U, Y) - Yg(U,X) - \theta([X,Y])\\
			 & =  - \theta([X, Y]).
	\end{align*}
	Therefore, $g(U, [X,Y]) = \theta([X,Y]) = 0$. Once more,
	\begin{align*}
			-\omega_0(X, Y) & =  d\theta^c(X,Y)\\ 
			& =  X\theta^c(Y) - Y\theta^c(X) - \theta^c([X,Y])\\ 
			& =  - \theta^c([X,Y]) = - g(V, [X, Y]).
	\end{align*}
	A similar calculation shows that
	\begin{itemize}
		\item $g(U, [U, X]) = 0$.
		\item $g(V, [U, X]) = 0$.
		\item $g(U, [V, X]) = 0$.
		\item $g(V, [V, X]) = 0$.

	\end{itemize}	
	If we consider $\mathfrak{v} := \Span_\R \{U, V\}$, the real Lie algebra generated by $U$ and $V$, the above calculations show that $\g := \mathfrak{v} \oplus \h_B$ has a bracket operation satisfying
	\begin{align*}
		[X, Y] & = \omega_0(X,Y) V + [X, Y]^\perp\\
		[X, U] & = [X,V] = [U,V] = 0
	\end{align*}
	Now, $[X, Y]^\perp$ might not be an element of $\h_B$ and $\omega_0 (X,Y)$ can be non-constant, so the usual Lie bracket of vector fields can fail to define a Lie bracket on $\g$ in general. By imposing an additional constraint on the dimension of $\h_B$ and requiring the foliation to have basic harmonic $1$-forms of constant length, this problem vanishes as shown below. Before the next result, we define an almost complex structure on $\g$. On $\mathfrak{v}$, we have that $JU = V$. It remains to define a $J$ on $\h_B$. Recall that $\HH_B^1(M)_\C = \HH_B^{1,0}\oplus\HH_B^{0,1}$. Therefore, given a basis $\alpha_1, \dots, \alpha_k, \overline{\alpha_1}, \dots, \overline{\alpha_k}$ of $\HH_B^1(M)_\C$, Remark \ref{Rem:RelationOfCandRHarmForms} implies that $\alpha_j = \beta_j -iJ\beta_j $ for $\beta_j \in \HH_B^1(M)$. Thus, $\beta_1, \dots, \beta_k, J\beta_1, \dots, J\beta_k$ form a basis of $\HH_B^1(M)$. Hence, $J$ is defines an almost complex structure on $\HH_B^1(M)$ by linearity. Remark \ref{Rem:RelationOfCandRHarmForms} implies once more that $J$ descends to an almost complex structure on $\h_B$.

	\begin{Prop}\label{LieAlgStruc}
		Let $(M^{2n+2}, J, \omega, \theta)$ be a Vaisman manifold with basic harmonic $1$-forms of constant length. Assume that $b_1(M) = 2n+1$. Then, $\g$ is a Lie algebra with a left-invariant complex structure $J$ and $\omega_0$ is a left-invariant $2$-form. Moreover, $(\g,J)$ is isomorphic to $(\R\times \h_{2n +1}, J_0)$, where $J_0$ is the standard complex structure on $\R\times \h_{2n +1}$.
	\end{Prop}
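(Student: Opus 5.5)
The argument is essentially dimension counting, followed by a computation of brackets showing that all structure constants are constant. By Proposition \ref{Prop:Kashiwada}, $b_1(M)=2n+1$ forces $b_1(\Sigma)=\dim \HH_B^1(M)=2n$, which is exactly $\codim\Sigma$. By Lemma \ref{PointwiseLILemma}, an orthonormal basis $\alpha_1,\dots,\alpha_{2n}$ of $\HH_B^1(M)$ has dual fields $A_1,\dots,A_{2n}$ that are pointwise orthonormal sections of $N$. Hence they form a global frame of $N$, and $\{U,V,A_1,\dots,A_{2n}\}$ is a global frame of $TM$. By the Claim preceding Definition of constant length, each $A_j$ is foliated. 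So the bracket relations derived just before the statement apply: $[A_i,U]=[A_i,V]=[U,V]=0$ and $[A_i,A_j]=\omega_0(A_i,A_j)V+[A_i,A_j]^\perp$.

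\textbf{Step 1: $[A_i,A_j]^\perp=0$.} Since each $\alpha_k$ is closed and basic,
\[
0=d\alpha_k(A_i,A_j)=A_i\alpha_k(A_j)-A_j\alpha_k(A_i)-\alpha_k([A_i,A_j])=-\alpha_k([A_i,A_j]),
\]
because $\alpha_k(A_j)=\delta_{kj}$ is constant. The $\alpha_k$ span $N^*$ pointwise and vanish on $T\Sigma$, so $[A_i,A_j]\in T\Sigma$. Together with the computations $g(U,[X,Y])=0$ and $g(V,[X,Y])=\omega_0(X,Y)$, this gives $[A_i,A_j]=\omega_0(A_i,A_j)V$.

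\textbf{Step 2: $\omega_0(A_i,A_j)$ is constant.} This is the step I expect to be the main obstacle. The plan is as follows.
\begin{enumerate}[label=(\arabic*)]
\item Write $\omega_0=\sum_{i<j}c_{ij}\,\alpha_i\wedge\alpha_j$ with $c_{ij}=\omega_0(A_i,A_j)$ basic functions.
\item Use $d\omega_0=0$ together with $d\alpha_i=0$. This gives $\sum dc_{ij}\wedge\alpha_i\wedge\alpha_j=0$, which only yields partial information when $n\ge2$.
\item Use instead that $J$ preserves $\h_B$, as defined just before the statement via Remark \ref{Rem:RelationOfCandRHarmForms}, and that $J$ is orthogonal. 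One can then choose the orthonormal basis in the form $\beta_1,\dots,\beta_n,J\beta_1,\dots,J\beta_n$. For that I must check that $J\beta_k$ is orthogonal to $\beta_l$ and that $J$ is an isometry for the $L^2$ product. The pointwise inner products are constant, so $J$ is a pointwise isometry on basic $1$-forms, and this should work.
\item With this basis, $c_{ij}=g(JA_i,A_j)$ up to sign, and since $JA_i$ is itself one of $\pm A_k$, each $c_{ij}$ is a constant, namely $0$ or $\pm1$.
\item Thus $\omega_0=\sum_k\beta_k\wedge J\beta_k$ up to sign, which is left-invariant. The same expression gives the $J$-invariance of the frame.
\end{enumerate}

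\textbf{Step 3: identification.} Set $X_k:=A_k$ and $Y_k:=JA_k$. The relations become $[X_k,Y_k]=\pm V$, all other brackets among $X,Y$ vanish, and $U,V$ are central. After rescaling $V$ by a sign, this is exactly $\R\times\h_{2n+1}$ with $Z=\pm V$ and $T=U$. Jacobi holds automatically because these are brackets of vector fields with constant coefficients in a global frame, so $\g$ is a Lie algebra.

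It remains to match the complex structure. We have $JX_k=Y_k$ and $JU=V$. The standard $J_0$ has $JZ=-T$, i.e. $JT=Z$. Choosing the sign $Z=V$ and $T=U$, when the sign in $[X_k,Y_k]=\omega_0(X_k,Y_k)V$ permits, gives $J=J_0$. Otherwise, replace $(U,V)$ by $(-U,-V)$, which also preserves $JU=V$ and fixes the sign. Left-invariance of $J$ and $\omega_0$ then holds by construction, since both have constant coefficients in the frame $\{U,V,X_k,Y_k\}$.
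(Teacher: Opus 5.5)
Your proposal is correct and follows the paper's proof. The identity $d\alpha(X,Y)=-\alpha([X,Y])$ for $X,Y\in\h_B$ kills $[X,Y]^\perp$, and $\omega_0(X,Y)=g(JX,Y)$ is constant because $J\h_B\subset\h_B$ together with Lemma \ref{PointwiseLILemma}. Completing $U,V$ to a $J$-adapted orthonormal frame then gives the isomorphism with $(\R\times\h_{2n+1},J_0)$. Your adapted basis in Step 2 is not needed, since $g(JX,Y)$ is already constant for any $X,Y\in\h_B$. No sign adjustment is needed either: with the paper's conventions $\omega_0|_N=g(J\cdot,\cdot)$ and $g(V,[X,Y])=\omega_0(X,Y)$, one gets $[X_k,JX_k]=V$ directly.
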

	\begin{proof}
		As seen before, since basic harmonic $1$-forms have constant length, $\omega_0 (X,Y)$ is a constant for all $X, Y \in \h_B$. By hypothesis $\dim \h_B = b_1(\Sigma) = \dim_\R M - 2 = \rk T\F^{\perp}$, which implies that a basis of $\h_B$ trivializes $N$. Now, for any other $A \in \h_B$
		$$g([X,Y], A) = \alpha([X, Y]) = X\alpha(Y) - Y\alpha(X) - d\alpha(X, Y) = 0.$$
		Hence, $[X,Y] = \omega_0(X,Y) V$ for $X,Y \in \h_B$. We conclude that $[\bigcdot,\bigcdot]$ defines a Lie bracket on $\g$ with the same relations of the bracket on $\R\times \h_{2n + 1}$. The almost complex structure $J$ on $\g$ defined above is a complex structure on $\g$ since the bracket $[\bigcdot,\bigcdot]$ is the usual Lie derivative on $M$. By completing $U, V$ to a $J$-invariant orthonormal basis of $\g$, we obtain a isomorphism from $(\g, J)$ to $(\R\times \h_{2n +1}, J_0)$.
	\end{proof}
	
	As mentioned in the introduction, we can finally prove the Vaisman version of the result in \cite{NagyLengthofHarm}.
	\begin{Theo}\label{Constant Length}
		Let $(M^{2n+2}, J, \omega, \theta)$ be a Vaisman manifold with basic harmonic $1$-forms having constant length. Assume that $b_1(M) = 2n+1$. Then, $M$ is diffeomorphic to a Kodaira-Thurston manifold where $J$ is the standard left-invariant complex structure under this diffeomorphism.
	\end{Theo}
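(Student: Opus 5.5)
The plan is to use Proposition \ref{LieAlgStruc} to build a global frame of $TM$ that realizes $\g \cong \R\times\h_{2n+1}$ as an action, and then identify $M$ with a compact quotient of $\R\times H_{2n+1}$.

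\textbf{Step 1: a global parallelism.} By Proposition \ref{Prop:Kashiwada}, $b_1(\Sigma) = b_1(M)-1 = 2n = \rk N$. By Lemma \ref{PointwiseLILemma}, an orthonormal basis $A_1,\dots,A_{2n}$ of $\h_B$ is pointwise linearly independent, so it trivializes $N$. Together with $U,V$ we get a global frame $\{U,V,A_1,\dots,A_{2n}\}$ of $TM$. The proof of Proposition \ref{LieAlgStruc} gives the brackets
\[ [A_i,A_j]=\omega_0(A_i,A_j)V, \qquad [A_i,U]=[A_i,V]=[U,V]=0, \]
and the coefficients $\omega_0(A_i,A_j)$ are constant. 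So the span $\g$ of these vector fields is a finite-dimensional Lie algebra of vector fields on $M$ isomorphic to $\R\times\h_{2n+1}$. Its elements span $T_pM$ at every point, and they are complete because $M$ is compact.

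\textbf{Step 2: integrate to a transitive action.} By Palais' theorem the infinitesimal action integrates to a smooth action of the simply connected group $G=\R\times H_{2n+1}$ on $M$. Since the vector fields span $T_pM$ pointwise and $\dim G=\dim M$, every orbit map $G\to M$, $g\mapsto g\cdot p$, is a local diffeomorphism. Its image is open, and the orbits partition $M$, so each orbit is also closed. Connectedness of $M$ then makes the action transitive.

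The isotropy group $\Gamma=G_p$ is discrete, and $M\cong G/\Gamma$ is compact, so $\Gamma$ is a lattice. The formulation as a left quotient $\Gamma\backslash G$ is obtained by passing to the right action, or by inverting. Hence $M$ is diffeomorphic to a Kodaira--Thurston manifold. The main care needed here is the bookkeeping of the sign and orientation conventions: the fundamental vector fields of a left action realize the Lie algebra with bracket $-[\cdot,\cdot]$, but $\h_{2n+1}$ is isomorphic to its opposite algebra, so this does not matter.

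\textbf{Step 3: the complex structure.} Under this diffeomorphism the frame $\{U,V,A_i\}$ corresponds to a frame of invariant vector fields on $\Gamma\backslash G$, namely the ones coming from $\g$. The complex structure $J$ maps the frame to itself with constant coefficients: $JU=V$, and $J$ preserves $\h_B$ by Remark \ref{Rem:RelationOfCandRHarmForms}, with a constant matrix in the basis $A_i$. Therefore $J$ is the left-invariant complex structure determined by $(\g,J)$. By Proposition \ref{LieAlgStruc}, $(\g,J)\cong(\R\times\h_{2n+1},J_0)$ via an automorphism that preserves the relevant structure. Composing the diffeomorphism with the induced automorphism of $G$ (which sends $\Gamma$ to another lattice) makes $J$ the standard complex structure.

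I expect Step 2 to be the main obstacle. The delicate points are checking carefully that a complete, pointwise-spanning Lie algebra of vector fields with $\dim\g=\dim M$ yields $M\cong\Gamma\backslash G$ with $\Gamma$ discrete and cocompact, and matching the left- and right-invariance conventions so that $J$ is genuinely left-invariant.
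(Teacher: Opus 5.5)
Your proposal is correct and follows essentially the same route as the paper. The paper also takes the Lie algebra $\g=\mathfrak{v}\oplus\h_B$ from Proposition \ref{LieAlgStruc}, integrates it to an action of $G=\R\times H_{2n+1}$ (citing Onishchik rather than Palais), and uses that the orbit map is a local diffeomorphism to get transitivity, hence $M\cong\Gamma\backslash G$ with $J$ left-invariant; your open--closed orbit argument, the check that the stabilizer is a lattice, and your handling of the left/right conventions and the normalization of $J$ spell out details that the paper's short proof leaves implicit.
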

	\begin{proof}
		By the proposition above and \cite[Corollary 3 on Page 113]{OnishchikLieGroups}, it follows that $G := \R \times H_{2n+1}$ defines a Lie group action on $M$ through the flows of a basis of $\g$. Fixing a point $p \in M$ we can define $\phi_p : G \to M$ by $\phi_p (g) := g\bigcdot p$. As before, $d(\phi_p)_e (X) = \vec{X}_p = X_p$ for any $X \in \g$. Since the vector fields are non-zero everywhere and they form a pointwise basis for $T_pM$, $ d(\phi_p)_e$ is an isomorphism for any $p$, hence $\phi_p$ is a local diffeomorphism. Since $G$ and $M$ are connected manifolds with the same dimension, we conclude that $\phi_p (G) = M$. This shows that the action is transitive, so $M$ is diffeomorphic to a Kodaira-Thurston manifold and $J$ is left-invariant.

	\end{proof}

	\begin{Rem}\label{Rem:CounterExample}
		
		We discuss the scope of the above theorem. What happens if $b_1(M)$ is arbitrary? First notice that $0 < b_1(M) \leq 2n +1$ for any Vaisman manifold $M$ by Remark \ref{Rem:CohLim} and Proposition \ref{Prop:Kashiwada}. In \cite{AndradaOriglia} the authors constructed families of solvmanifolds with left-invariant Vaisman structures with varying first Betti number. We consider one of these families and show that they have basic harmonic $1$-forms of constant length. 
		
		Consider the basis $\{Z, X_1, Y_1, \dots, X_n, Y_n\}$ satisfying $[X_i, Y_i] = Z$, with all other brackets trivial. Let $a_1, \dots, a_n \in \R$ and define
		\[
		D:=\begin{pmatrix}
			0 &  &  &  &  & \\
			&  0& -a_1 &  &  &  \\
			&  a_1&  0&   &  &  \\
			&  &  & \ddots &  &   \\
			&  &  &  & 0 & -a_n \\
			&  &  &  & a_n & 0 \\
		\end{pmatrix}
		\]
		
		Let $\g = \g_{a_1, \dots, a_n} := \R A \ltimes_D \h(1,n)$, where $A$ is just a symbol representing a generator. These are called the {\em oscillator Lie algebras}. One can show that if $\exists c \in \R\setminus \{0\}$ such that $(a_1, \dots, a_n) = c (b_1, \dots, b_n)$, then $\g_{a_1, \dots, a_n}$ and $\g_{b_1, \dots, b_n}$ are isomorphic. All these Lie algebras admits a Vaisman structure in the following manner. Define a complex structure $J$ by the relations $JA := Z, JX_i := Y_i$, and consider a metric $g$ given by declaring $\{A, Z, X_1, Y_1, \dots, X_n, Y_n\}$ an orthonormal basis of $\g$. Then, $g$ is a Hermitian metric by construction. Consider $\theta := A^*$ the dual of $A$ and $\omega$ the fundamental form. By \cite[Theorem 3.10]{AndradaOriglia} $(J, \omega, \theta)$ defines a Vaisman structure on $\g$. 
		
		We consider now the associated Lie group $G = G_{a_1, \dots, a_n} := \R \ltimes_{\varphi} H(1,n)$, where we are considering $H(1,n)$ being described as $\R^{2n+1}$ and $\varphi(t) := e^{tD}$. Therefore,
		\[
		\varphi(t) = \begin{pmatrix}
			1 &  &  &  &  & \\
			&  \cos ta_1 & -\sin ta_1 &  &  &  \\
			&  \sin ta_1& \cos ta_1&   &  &  \\
			&  &  & \ddots &  &   \\
			&  &  &  & \cos ta_n & -\sin ta_n \\
			&  &  &  & \sin ta_n & \cos ta_n \\
		\end{pmatrix}.
		\]
		
		In \cite{AndradaOriglia}, the authors takes $a_i \in \Z$ for all $i$. In this way, they consider a fixed lattice $\Gamma_k := \frac{1}{2k} \Z \times \dots \times \Z$ for $H(1,n)$ and construct three families of lattices in $G$ by
		\begin{equation*}
			\Lambda_{k, \frac{\pi}{2}} := \frac{\pi}{2}\Z \ltimes_{\varphi} \Gamma_k, \qquad  \Lambda_{k, \pi} := \pi\Z \ltimes_{\varphi} \Gamma_k, \qquad \Lambda_{k, 2\pi} := 2\pi\Z \ltimes_{\varphi} \Gamma_k.
		\end{equation*}
		If $D = 0$, then one recovers $\R\times H(1,n)$ with its standard Vaisman structure. When one consider the expression of the left-invariant structure $(J, \omega, \theta)$ over the Lie group $G$, by a direct calculation it can be shown that it coincides with the expression for the metric over the nilpotent group $\R\times H_{2n+1}$. In \cite{Gomes} we showed how the solvmanifold $M_1 := \Lambda_{k, \pi} \backslash G$ is a quotient of the nilmanifold $M_0 := \Lambda_{k, 2\pi} \backslash G$ by a $\Z_2$-action. Therefore, the Vaisman structure above descends to the \textbf{standard} Vaisman structure on $M_0$ which is invariant over the $\Z_2$-action. This implies that $\Z_2$ acts by isometries. Since under the standard structure the harmonic forms on $M_0$ are left-invariant, they have constant length. By the isometric action we obtain that $\HH^1(M_0)^{\Z_2} \cong \HH^1(M_1)$. Thus, $M_1$ also has basic harmonic $1$-forms of constant length. For $M_2 := \Lambda_{k, \frac{\pi}{2}} \backslash G$ a similar argument applies.
		
		In \cite{Gomes} it was shown that every Vaisman solvmanifold is a finite quotient of a Kodaira-Thurston manifold. With these results in mind, a naive expectation would be that, for arbitrary $b_1(M)$, $M$ must be diffeomorphic to a solvmanifold. This already fails for $b_1(M) = 1$. Indeed, the classical Hopf manifold $M \cong S^1 \times S^{2n+1}$ is a Vaisman manifold with $b_1(M) = 1$. It has basic harmonic $1$-forms of constant length trivially and it cannot be a solvmanifold because it is not aspherical. Since for $b_1(M) = 1$ any Vaisman manifold has basic harmonic $1$-forms of constant length trivially, perhaps this is a degenerate case and asking for $b_1(M) > 1$ our expectation should hold. This is also does not work and we construct an example of with $ 2n +1> b_1(M) > 1$ which is not aspherical in the following. Let $A$ be an Abelian variety of $\dim_\C = p$. Consider the projective K\"ahler manifold $A \times \C P^q$ with the product K\"ahler metric, where $q \geq 1$.  Using the construction method of the Boothby-Wang fibration, we obtain a Sasakian manifold $S^1 \xlongrightarrow{} S \xlongrightarrow{\pi} A\times \C P^q$ with $\dim S = 2 p + 2q +1$. Taking  the trivial Vaisman extension $M := S^1 \times S$ we see that $b_1(M) = 2p + 1 < 2p + 2q + 1$ and $M$ has all basic $1$-forms of constant length.

	\end{Rem}

	\section{The Main Theorem}

	We start with a Lemma.
	
	\begin{Lemma}\label{AlbMaxRankLemma}
		Let $(M, J, \omega, \theta)$ be a Vaisman manifold. Suppose there exists a basis of $\HH_B^1(M)_\C$ which is pointwise linearly independent at all points of $M$. Then, the Albanese map $\alpha$ has maximal rank everywhere. In particular, if $M$ has basic harmonic $1$-forms of constant length, the Albanese map has maximal rank everywhere. 
	\end{Lemma}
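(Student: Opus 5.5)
The plan is to use the explicit formula for the differential of the Albanese map from the previous section, $d(\alpha_{x_0})_x[v] = \iota_v|_{\Omega^1}$. Here $\Omega^1(M) = H_B^{1,0}(M)$ has complex dimension $k$, and $\dim_\C \Alb(M) = k$ by the theorem of Section \ref{Sec:AlbTorus}. So ``maximal rank'' means that $d\alpha_x : T_xM \to (\Omega^1(M))^*$ is surjective as a real linear map onto a $2k$-dimensional space. Equivalently, the $\C$-linear map $T_xM\otimes\C \to \C^k$, $v\mapsto (\omega_1(v),\dots,\omega_k(v))$, should have real image of full real dimension $2k$.

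\textbf{Step 1.} First I identify $\Omega^1(M)$ with basic harmonic forms. By Proposition \ref{Prop:Tsukada} every holomorphic $1$-form is closed and basic. It is therefore a $\overline{\partial}_B$-closed basic $(1,0)$-form, and in the transversely K\"ahler setting these are exactly the elements of $\HH_B^{1,0}(M)$; in particular they are basic harmonic. Take a basis $\omega_1,\dots,\omega_k$ of $\Omega^1(M)=\HH_B^{1,0}(M)$. Then $\omega_1,\dots,\omega_k,\overline{\omega_1},\dots,\overline{\omega_k}$ is a basis of $\HH_B^1(M)_\C$. By hypothesis some basis of $\HH_B^1(M)_\C$ is pointwise linearly independent. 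Any other basis differs from it by a constant invertible matrix, so the basis $\{\omega_j,\overline{\omega_j}\}$ is also pointwise linearly independent in $T_x^*M\otimes\C$ at every $x$.

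\textbf{Step 2.} Next I deduce surjectivity. Suppose the real map $v\mapsto(\omega_j(v))_j$ from $T_xM$ to $\C^k\cong\R^{2k}$ is not surjective. Then some nonzero real functional vanishes on its image. Such a functional has the form $\mathrm{Re}\sum_j c_j\omega_j(v)$ for some $c\in\C^k\setminus\{0\}$. This gives $\sum_j c_j\omega_j+\sum_j\overline{c_j}\,\overline{\omega_j}=0$ on $T_xM$, contradicting the pointwise independence from Step 1. Hence $d\alpha_x$ has real rank $2k$ at every point, which is maximal.

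\textbf{Step 3.} Finally I treat the constant-length case. By Lemma \ref{PointwiseLILemma}, an orthonormal basis of $\HH_B^1(M)$ is pointwise linearly independent over $\R$. Real covectors that are independent over $\R$ remain independent over $\C$ after complexification. So this real basis, viewed in $\HH_B^1(M)_\C$, is a pointwise independent basis, and the first part applies.

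The step needing the most care is Step 1, namely the identification $\Omega^1(M)=\HH_B^{1,0}(M)$ that makes the hypothesis apply to holomorphic forms. I would justify it by combining Tsukada's result with the basic Hodge decomposition recalled in the preliminaries.
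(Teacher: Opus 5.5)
Your proof is correct. It shares the paper's skeleton: the formula $d\alpha_x[v]=\iota_v|_{\Omega^1}$ together with the basic Hodge decomposition, which transfers the hypothesis to holomorphic forms. The final linear-algebra step, however, is done differently.

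The paper projects the given pointwise independent basis onto its $(1,0)$-parts and writes these as $\beta_j-iJ\beta_j$. It then passes to the metric-dual harmonic fields $B_j, JB_j\in\h_B$ and exhibits the image of $d\alpha_p$ by evaluating directly on these vectors, via $\iota_{B_j|_p}(\alpha_l)=\beta_l(B_j|_p)=\delta_{jl}$. That constructive route has a real benefit: it produces explicit vectors in $N$ on which $d\alpha_p$ is an isomorphism, which ties in with using $\h_B$ as a frame elsewhere. But the evaluation $\delta_{jl}$ relies on the $\beta_j, J\beta_j$ being pointwise orthonormal. This holds in the constant-length case; under mere pointwise independence it must be replaced by invertibility of the pointwise Gram matrix. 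The paper's claim that the $(1,0)$-projections of the given basis themselves form a pointwise independent basis is also stated somewhat loosely.

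Your route avoids both issues. You transport pointwise independence to the adapted basis $\{\omega_j,\overline{\omega_j}\}$ by a constant invertible change of basis. You then observe that a nonzero real functional $z\mapsto\mathrm{Re}\sum_j c_j z_j$ killing the image would force the pointwise relation $\sum_j c_j\omega_j+\sum_j\overline{c_j}\,\overline{\omega_j}=0$. This needs no metric normalization and works verbatim for any pointwise independent basis.

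On your concern about Step 1: your argument only needs the inclusion $\Omega^1(M)\subseteq\HH_B^{1,0}(M)$, not equality. The set $\{\omega_j,\overline{\omega_j}\}$ is then linearly independent in $\HH_B^1(M)_\C$, and it can be extended to a basis before applying the change-of-basis observation.
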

	\begin{proof}
			Set $2n := b_1(\Sigma)$. By hypothesis, let $\omega_1, \dots, \omega_2n$ be a $\C$-basis of $\HH_B^1(M)_\C$ which is $\C$-linearly independent at every point $p\in M$. In particular, they are non-zero at every point. Consider the decomposition 
			$$\omega_j = \frac{1}{2}(\omega_j - iJ\omega_j) + \frac{1}{2}(\omega_j + iJ\omega_j).$$
			Then $\alpha_j := \frac{1}{2}(\omega_j - iJ\omega_j) \in \HH_B^{1,0}$ form a basis of $\HH_B^{1,0}$ which is pointwise linearly independent. By Remark \ref{Rem:RelationOfCandRHarmForms}, $\alpha_j = \beta_j - i J\beta_j$ where $\beta_j \in \HH_B^1(M)$. Therefore, $\beta_1, \dots, \beta_n,$ $J\beta_1, \dots, J\beta_n$ is a basis for $\HH_B^1(M)$ which is $\R$-linearly independent at every point. Consider $B_1, \dots, B_n, JB_1, \dots, JB_n$ the induced basis on $\h_B (M)$. Since, for any point $p \in M$ we have that $d\alpha_p [v] = \iota_v|_{\Omega^1}$, we obtain in particular that $d \alpha_p[B_j] = \iota_{B_j|_p}$. Now, $\iota_{B_j|_p}(\alpha_l) = \beta_l(B_j|_p) = \delta_{jl}$. Similarly,  $\iota_{JB_j|_p}(\alpha_l) = - \delta_{jl}$, so $d\alpha_p$ has maximal rank for any $p$.
		
		When $M$ has basic harmonic $1$-forms of constant length, we can choose an orthonormal basis $\alpha_1, \dots, \alpha_n$ of $\HH_B^{1}(M)_\C$, and this is automatically orthonormal at every point $p \in M$ by Lemma \ref{PointwiseLILemma}.

	\end{proof}

	\begin{Theo}\label{MaximalRankTheo}
		Let $M^{2n+2}$ be a Vaisman manifold with $b_1(M) = 2n+1$ and assume that its Albanese map $\alpha$ has maximal rank everywhere. Then, $\Sigma$ is regular and the Boothby-Wang fibration of $(M, \Sigma)$ is given by the Albanese map as shown below. 
		$$ T^2 \longrightarrow M \xlongrightarrow{\alpha} \Alb (M).$$	 
	\end{Theo}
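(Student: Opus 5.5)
The plan is to show that the Albanese map $\alpha : M \to \Alb(M)$ is a proper submersion whose fibers are exactly the leaves of $\Sigma$. Ehresmann's theorem then makes it a fiber bundle, and the leaf space $M/\Sigma$ is identified with $\Alb(M)$ as a smooth manifold.

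First, the dimensions. By Proposition \ref{Prop:Kashiwada}, $b_1(M) = 2n+1$ gives $b_1(\Sigma) = 2n$, so $\dim_\C \Omega^1(M) = \dim_\C H_B^{1,0} = n$. By the theorem of Section \ref{Sec:AlbTorus}, $\dim_\C \Alb(M) = n$, so $\dim_\R \Alb(M) = 2n$. Maximal rank of $\alpha$ therefore means that $d\alpha_p$ is surjective for every $p$. Since $M$ is compact, $\alpha$ is a proper surjective submersion (surjective because its image is open and closed), and by Ehresmann it is a locally trivial fibration with compact fibers of real dimension $2$.

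Next, identify the fibers. By the formula $d(\alpha)_p[v] = \iota_v|_{\Omega^1}$, the kernel of $d\alpha_p$ consists of the vectors annihilated by every holomorphic $1$-form. By Proposition \ref{Prop:Tsukada}, every holomorphic $1$-form is basic, so $T_p\Sigma = \Span\{U_p, V_p\} \subset \ker d\alpha_p$. The kernel has real dimension $2n+2-2n = 2$, so $\ker d\alpha_p = T_p\Sigma$. Hence the connected components of the fibers of $\alpha$ are integral manifolds of $T\Sigma$, and these are precisely the leaves of $\Sigma$. In particular every leaf is compact, so $\Sigma$ is quasi-regular, and by Proposition \ref{Prop:QuasiRefVaismanM} each leaf is a complex torus $T^2$ obtained as an orbit of the holomorphic action generated by $U$ and $V$.

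The main obstacle is showing that the fibers of $\alpha$ are connected, that is, that each fiber is a single leaf rather than a finite union of leaves. Without this, $\alpha$ only factors as $M \to M/\Sigma \to \Alb(M)$ with the second map a finite covering.

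I would argue via the universal property. First form the Stein factorization $M \to Y \to \Alb(M)$, where $Y$ is the space of connected components of fibers, which here is the leaf space. Then $Y \to \Alb(M)$ is a finite unbranched holomorphic covering of a complex torus, so $Y$ is itself a complex torus and $M \to Y$ is holomorphic. The universal property of the Albanese then gives a holomorphic map $\tilde F : \Alb(M) \to Y$ whose composite with the covering $Y \to \Alb(M)$ restricts to the identity on the image of $\alpha$, hence is the identity. This forces the covering to have degree one.

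An alternative route uses homology. The fiber bundle gives the exact sequence $\pi_1(F) \to \pi_1(M) \to \pi_1(\Alb(M)) \to \pi_0(F) \to 0$, and $\varphi(H_1(M))$ is the full lattice $\Delta$ by construction. So the induced map $H_1(M) \to H_1(\Alb(M))$ is surjective, hence $\pi_1(M) \to \pi_1(\Alb(M))$ is surjective and $\pi_0(F)$ is trivial. I would use this second argument since it avoids Stein factorization.

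Once the fibers are connected, $M/\Sigma \cong \Alb(M)$ is a smooth manifold, so $\Sigma$ is regular. The Boothby–Wang fibration $T^2 \to M \to \X$ then coincides with $\alpha$, with $\X = \Alb(M)$.
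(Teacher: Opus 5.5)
Your proof is correct, but it takes a different route from the paper.

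\emph{How the paper argues.} It gets quasi-regularity exactly as you do. It then proves regularity separately by showing the $T^2$-action is free. For $g$ in the stabilizer of $p$, the identity $\alpha\circ L_g=\alpha$ forces $d(L_g)_p=\Id$ on $N_p$, because $d\alpha|_N$ is an isomorphism. It is also the identity on $T_p\Sigma$, so the isometry $L_g$ equals $\Id_M$. The paper then forms $\X=M/T^2$ and notes that the induced $\beta:\X\to\Alb(M)$ is a holomorphic covering. It identifies $\X$ as a complex torus, citing Hasegawa and Benson--Gordon. Finally it uses the universal property of the Albanese to build an inverse $\delta$ with $\delta\circ\beta=\Id_{\X}$ and $\beta\circ\delta=\Id_{\Alb(M)}$.

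\emph{How yours compares.} Your option (a) is essentially that last step, with the leaf space built as the space of fibre components rather than as an orbit space. Your preferred option (b) replaces it with a purely topological argument: the exact sequence ending in $\pi_1(\Alb(M))\to\pi_0(F)\to 0$, together with $\alpha_*=\varphi$ on $H_1$. What (b) buys you:
\begin{itemize}
\item You never need to know in advance that the intermediate quotient is a torus.
\item You need no stabilizer argument. Once the fibres are connected, $\Sigma$ is the fibre foliation of a submersion, so regularity is immediate, and the principal-bundle structure comes from the Chen--Piccinni/Vaisman theorem.
\end{itemize}
In fact regularity holds even before connectedness, since the leaf space is already the finite covering $Y$ of $\Alb(M)$. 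In exchange, the paper's route exhibits the freeness of the action and the biholomorphism $\X\cong\Alb(M)$ explicitly.

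\emph{Points to tighten.}
\begin{itemize}
\item $\varphi(H_1(M))=\Delta$ does not hold ``by construction'', since $\Delta$ is defined as the closure of $\varphi(H_1(M))$. It holds because the theorem of Section 3 shows $\varphi(H_1(M))$ is already a lattice.
\item You should say why $\alpha_*:H_1(M)\to\pi_1(\Alb(M))\cong\Delta$ equals $\varphi$. The path $t\mapsto\int_{x_0}^{\gamma(t)}$ in $(\Omega^1(M))^*$ lifts $\alpha\circ\gamma$ and ends at $\varphi([\gamma])$.
\item In (a), $c\circ\tilde F=\Id$ only says the covering $c:Y\to\Alb(M)$ has a section. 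To get degree one you need either connectedness of $Y$, or the paper's device of using surjectivity of $M\to Y$ to obtain $\tilde F\circ c=\Id_{Y}$.
\item The induced bijection $M/\Sigma\to\Alb(M)$ is a biholomorphism, not merely a diffeomorphism, because $\alpha$ is holomorphic. It is worth saying so.
\end{itemize}
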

	
	\begin{proof}
		\mbox{}\\
		\textbf{Step 1:} The structure is quasi-regular.
		
		By hypothesis, $\alpha$ has maximal rank everywhere. Hence, $\alpha^{-1}(q)$ is a compact $2$-dimensional embedded submanifold of $M$ for all $q\in \Alb(M)$. Since each fiber is compact, they have a finite number of connected components. Since $\alpha$ is constant over the leaves of the foliation, every leaf is contained into a fiber of $\alpha$. Since every leaf is $2$-dimensional and connected, it is a connected component of a fiber. This can be seen as follows: Let $S \subset M$ be a fiber of $\alpha$. For every $p \in S$, consider $L_p \Sigma$ the leaf containing the point $p$. Hence, $S = \bigcup_{p\in S}L_p$. We can consider the distribution $D := T\Sigma|_S$ on $S$. Since every leaf in $S$ has the same dimension of $S$, $D = TS$. Thus, each connected component of $S$ is a integral manifold of $D$. Since a leaf of $\Sigma$ inside $S$ is also a maximal integral manifold of $D$, we get that each connected component of $S$ is a leaf of $\Sigma$. In particular, every leaf of $\Sigma$ is compact.

		\textbf{Step 2:} The structure is regular.
		
		Since $\Sigma$ is quasi-regular, Proposition \ref{Prop:QuasiRefVaismanM} shows that $\Sigma = \{T^2\bigcdot p \}_{p \in M}$ for a complex torus $T^2$ defined by the flows of $U$ and $V$. We calculate for any $p \in M$ and $g\in T^2$
		$$\alpha \circ L_g(p) = \left[\int_{p_0}^{L_g(p)}\right] = \left[\int_{p_0}^{p}\right] + \left[\int_{p}^{L_g(p)}\right].$$
		Since $T^2$ defines the leaves of $\Sigma$, $\left[\int_{p}^{L_g(p)}\right] = 0$, hence $\alpha \circ L_g(p) = \alpha(p)$. In particular, $d(\alpha_p\circ L_g)_p = d\alpha_p$. Now, suppose $g \in T^2_p$ and consider $d(L_g)_p : T_pM \to T_pM$. Since $T^2$ acts by isometries on $M$, it preserves the normal bundle $N$, which means that $d(L_g)_p|_{N_p} : N_p \to N_p$ is an isomorphism. The Albanese map has maximal rank everywhere, hence $\ker d\alpha = T\Sigma$. Thus, $d\alpha|_N$ is an isomorphism. Since $p = L_g(p)$, this implies that $d(L_g)_p|_{N_p} = \Id_{N_p}$. Recall that $L_g$ preserves action fields $\tilde{X}_p := \frac{d}{dt}|_{t = 0} \exp tX \bigcdot p$. Since they define a basis for $T_p\Sigma$, $d(L_g)_p|_{T_p\Sigma} = \Id_{T_p\Sigma}$. We obtain that $d(L_g)_p = \Id_{T_pM}$. Since $M$ is connected and $L_g$ is a isometry for any $g \in T^2$, we obtained that $L_g = \Id_M$ for all $g\in T^2_p$. We conclude that $T^2$ acts freely and by biholomorphic isometries on $M$, therefore it defines a complex Hermitian manifold $\X = M \slash T^2$.

		\textbf{Step 3:} The Boothby-Wang fibration of $(M, \Sigma)$ is given by the Albanese map.
		
		Let $\pi : M \to \X$ be the natural quotient map. Thus, $\pi$ is a Riemannian holomorphic submersion. Since $\alpha$ is constant along the fibers of $\pi$, we obtain that there exists a unique holomorphic map $\beta : \X \to \Alb (M)$ such that the diagram below commute.
		\[
		\begin{tikzcd}
			M \arrow[r, "\alpha"] \arrow[d, "\pi"'] & \Alb(M) \\
			\X \arrow[ru, "\beta"', dashed]         &        
		\end{tikzcd}
		\]
		Recall that $\omega = \omega_0 + \theta\wedge\theta^c$. Since $\theta\wedge\theta^c|_N = 0$ we obtain that $\omega$ projects to $\omega_1$ on $\X$ in such a way that $\pi^*\omega_1 = \omega_0$. In particular $d\omega_1 = 0$, meaning $\omega_1$ is K\"ahler. By the above diagram, $\beta$ is a holomorphic covering map between K\"ahler manifolds. In fact, $\X$ is a compact connected aspherical K\"ahler manifold. By \cite{Hasegawa, BensonGordon}, $\X$ is biholomorphic to a complex torus $T^{2n}$. 
		
		Using the universal property of the Albanese map, there exists a unique holomorphic map $\delta : \Alb(M) \to \X$ such that the following diagram commutes.
		\[
		\begin{tikzcd}
			M \arrow[r, "\alpha"] \arrow[d, "\pi"'] & \Alb(M) \arrow[ld, "\delta"', dashed] \\
			    \X      &        
		\end{tikzcd}
		\]
		Thus, 
		$$\alpha = \beta \circ \pi \implies \pi = \delta \circ \alpha = \delta \circ \beta \circ \pi \implies \delta \circ \beta = \Id_\X.$$ On the other hand,		
		$$\pi = \delta \circ \alpha \implies \alpha = \beta \circ \pi = \beta \circ \delta \circ \alpha \implies \beta \circ \delta = \Id_{\Alb(M)}.$$
		Therefore, $\beta$ is a biholomorphism between $\Alb(M)$ and $\X$. The action of $T^2$ is holomorphic and free, hence the map $\phi_p : T^2 \to M$ defined by $\phi_p(g) := g\bigcdot p$ restricts to a biholomorphism $\phi_p : T^2 \to T^2 \bigcdot p$, meaning every leaf of $\Sigma$ is biholomorphic to $T^2$. Since $M$ is compact, $\pi$ is a proper map. Since $\X$ is connected and $\pi$ is a proper submersion, Ehresmann's Fiber Bundle Theorem implies that $M$ is a smooth fiber bundle 
		$$ T^2 \longrightarrow M \longrightarrow T^{2n} \cong \Alb(M).$$

	\end{proof}

	\begin{Cor}\label{MaxRankCoro}
		Let $(M^{2n+2}, J, \omega, \theta)$ be a Vaisman manifold with basic harmonic $1$-forms of constant length. Assume that $b_1(M) = 2n+1$. Then, the following are true:
		\begin{enumerate}[label=(\arabic*)]
			\item $M$ is diffeomorphic to a Kodaira-Thurston manifold where $J$ is the standard left-invariant complex structure.
			\item The characteristic foliation $\Sigma$ is regular and the Boothby-Wang fibration of $(M, \Sigma)$ is given by the Albanese map.
			\item $\omega_0$ is a flat left-invariant K\"aler metric on $\Alb (M)$.
		\end{enumerate}
	\end{Cor}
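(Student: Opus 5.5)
Items (1) and (2) follow by combining results already proved in the paper. Item (1) is exactly Theorem \ref{Constant Length}, applied verbatim under the present hypotheses. For item (2), we first invoke Lemma \ref{AlbMaxRankLemma}. Since all basic harmonic $1$-forms have constant length, it shows that the Albanese map $\alpha$ has maximal rank everywhere. Theorem \ref{MaximalRankTheo} then applies because $b_1(M)=2n+1$. It gives that $\Sigma$ is regular and that the Boothby-Wang fibration $T^2\to M\to \X$ is the Albanese fibration $T^2 \to M \xrightarrow{\alpha}\Alb(M)$. Here $\X \cong \Alb(M)$ via the biholomorphism $\beta$ constructed in Step 3 of that proof.

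For item (3), the plan is to read off the transverse metric from the Lie algebra $\g=\mathfrak v\oplus\h_B$ of Proposition \ref{LieAlgStruc}. By Lemma \ref{PointwiseLILemma}, an orthonormal basis $A_1,\dots,A_{2n}$ of $\h_B$ is pointwise orthonormal and trivializes $N$. Moreover, $\omega_0(A_i,A_j)$ and $g(A_i,A_j)$ are constants: the former by Proposition \ref{LieAlgStruc}, the latter by Lemma \ref{PointwiseLILemma}. Hence $\omega_0$ and the transverse metric $g_0$ have constant coefficients in the dual coframe $\{\alpha_i\}$ of basic harmonic $1$-forms. This is the sense in which $\omega_0$ is left-invariant.

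Since $\Sigma$ is regular, the basic forms $\alpha_i$ descend to closed $1$-forms $\bar\alpha_i$ on $\X\cong \Alb(M)$ with $\pi^*\bar\alpha_i=\alpha_i$. The induced Kähler form $\omega_1$, with $\pi^*\omega_1=\omega_0$, is therefore $\sum c_{ij}\bar\alpha_i\wedge\bar\alpha_j$ with constant $c_{ij}$, and similarly for the metric. Next we identify the $\bar\alpha_i$ with invariant forms on the torus. By the differential formula $d\alpha_x[v]=\iota_v|_{\Omega^1}$, the real and imaginary parts of the pullbacks under $\alpha$ of the linear coordinate differentials $dz_l$ on $(\Omega^1(M))^*$ are exactly the basic harmonic forms $\beta_l, J\beta_l$. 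Therefore the $\bar\alpha_i$ are translation-invariant $1$-forms on $\Alb(M)=(\Omega^1)^*/\Delta$. A metric with constant coefficients in a translation-invariant coframe of a torus is translation-invariant, hence flat. So $\omega_1$ is a flat, left-invariant Kähler metric on $\Alb(M)$.

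The main obstacle is this last identification: checking carefully that the descended coframe coincides, under $\beta$, with the invariant coframe of $\Alb(M)$, rather than merely being a closed coframe. An alternative that avoids this check is a Bochner argument. Since the $\bar\alpha_i$ are parallel-length harmonic forms, Bochner's formula gives $\nabla\bar\alpha_i=0$ together with Ricci-flatness. A global parallel coframe then forces the curvature to vanish, which gives flatness directly.
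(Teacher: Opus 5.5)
Your proposal is correct and follows the paper's route: (1) is Theorem \ref{Constant Length}, (2) is Lemma \ref{AlbMaxRankLemma} plus Theorem \ref{MaximalRankTheo}, and (3) combines the constancy of $g_0$ and $\omega_0$ on $\h_B$ (Proposition \ref{LieAlgStruc}) with flatness of invariant metrics on a torus. The step you flag as the main obstacle is already settled by your own argument: $\alpha=\beta\circ\pi$ gives $\pi^*\bar\alpha_i=\alpha_i=\alpha^*\tau_i=\pi^*\beta^*\tau_i$ with $\tau_i$ translation-invariant, and $\pi^*$ is injective, so you are filling in a step the paper leaves implicit.

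Drop the Bochner alternative, though. Constant length only yields $|\nabla\bar\alpha_i|^2+\Ric(\bar\alpha_i^\sharp,\bar\alpha_i^\sharp)=0$, which does not split into $\nabla\bar\alpha_i=0$ and $\Ric=0$ without a sign on $\Ric$. Flatness does follow directly, because a closed orthonormal coframe has a commuting orthonormal dual frame, but that route would still not give left-invariance.
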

	\begin{proof}
		Items $(1)$ and $(2)$ follow from Theorem \ref{Constant Length} and Theorem \ref{MaximalRankTheo}. Item $(3)$ follows from Proposition \ref{LieAlgStruc} together with the fact that all left-invariant metrics on a torus are flat.
	\end{proof}
	
	Using a version of the Bochner method for foliations, one obtains the following result.
	\begin{Prop}[\cite{HabibModifiedDiff}, \cite{RoschigBochnerTechnique}]
		Let $(M, \F, g)$ be a Riemannian foliation with $\Ric^T = 0$. Then, all basic harmonic fields are transverse parallel. In particular, they all have constant length.
	\end{Prop}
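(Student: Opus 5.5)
The plan is to run the classical Bochner argument on basic $1$-forms, using the transverse Weitzenböck formula for Riemannian foliations. I will take a basic harmonic field $A \in \h_B$ with dual basic harmonic $1$-form $\alpha$. Since the foliations here are assumed taut, $\delta_B = \pm *_B d *_B$, so the basic Laplacian has no correction term coming from the mean curvature form $\kappa$.

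\textbf{Step 1: the Weitzenböck formula.} I will establish the transverse identity
\[
\Delta_B \alpha = (\nabla^T)^*\nabla^T \alpha + \Ric^T(A)^\flat ,
\]
where $(\nabla^T)^*$ is the formal adjoint of $\nabla^T$ on basic sections, computed with the inner product $(\cdot,\cdot)$. To check it, I work in a foliated chart. There $\alpha$, $\nabla^T$ and $\Ric^T$ are the pullbacks of the corresponding objects on the local quotient Riemannian manifold $\overline{U}$. The usual Riemannian Bochner identity on $\overline{U}$ therefore pulls back to a pointwise identity on $M$.

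\textbf{Step 2: globalize by integration.} With $\Ric^T=0$ and $\Delta_B\alpha=0$, I pair the identity with $\alpha$ and integrate over $M$ against $\nu\wedge\chi = dV_g$. This gives
\[
0 = (\Delta_B\alpha,\alpha) = \|\nabla^T\alpha\|^2 ,
\]
so $\nabla^T\alpha = 0$.

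The main obstacle is the integration by parts $((\nabla^T)^*\nabla^T\alpha,\alpha) = \|\nabla^T\alpha\|^2$. The transverse divergence of a basic vector field $Y$ satisfies $\operatorname{div}_g Y = \operatorname{div}_T Y - \kappa(Y)$. Tautness lets me choose the metric so that $\kappa = 0$, and then $\int_M \operatorname{div}_T Y \, dV_g = 0$. I would apply this to $Y = \tfrac12 \nabla |\alpha|^2$, viewed as a basic field, and use the pointwise identity
\[
\tfrac12 \Delta |\alpha|^2 = |\nabla^T\alpha|^2 - \langle (\nabla^T)^*\nabla^T\alpha,\alpha\rangle .
\]
For this step I would cite \cite{Tondeur}, Chapter 7, or \cite{HabibModifiedDiff}. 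In the Vaisman case $\kappa=0$ already holds, because the leaves are totally geodesic: $U$ and $V$ are parallel and Killing.

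\textbf{Step 3: constant length.} Since $\nabla^T$ is metric for $g_Q$, I get $d|\alpha|^2(X) = 2g_Q(\nabla^T_X\alpha,\alpha) = 0$ for $X\in N$. For $X\in T\F$, the same derivative vanishes because $|\alpha|^2$ is a basic function, $g_Q$ and $\alpha$ being holonomy invariant. Since $M$ is connected, $|\alpha|$ is constant, and so are the pairwise inner products $g(\alpha_1,\alpha_2)$ by polarization.
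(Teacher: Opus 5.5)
Your argument is correct and is the transverse Bochner technique the paper has in mind; the paper gives no proof of its own and only cites Habib and R\"oschig for ``a version of the Bochner method for foliations.'' The one step worth stating explicitly is why replacing $g$ by a metric with minimal leaves is legitimate: the change is only leafwise, so $g_Q$, $\nabla^T$, $\Ric^T$ and $*_B$ are unchanged, and therefore so are $\delta_B=\pm *_B d *_B$ (the paper's taut convention) and the space of basic harmonic forms (in the Vaisman application $\kappa=0$ holds anyway).
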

	
	We recall the transverse Calabi-Yau Theorem.
	
	\begin{Theo}[\cite{ElKacimi}]
		Let $(M, \F, \omega_0)$ be a transversely K\"ahler foliated manifold. Then, for any $\rho \in c_{1, B}(M)$ there exists a unique transverse K\"ahler form $\omega_1 \in [\omega_0]_B$ such that $\rho$ is the transverse Ricci form of $\omega_1$.
	\end{Theo}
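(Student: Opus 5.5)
The plan is to reduce the statement to a transverse complex Monge--Amp\`ere equation on basic functions and to solve it by Yau's continuity method. Throughout we work in the basic complex $(\A_B^*(M), d_B)$ with the basic Hodge theory of \cite{ElKacimi} recalled in the preliminaries, and we use tautness, which we assume for all foliations here. Write $m$ for the complex codimension of $\F$.

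\textbf{Reduction to a Monge--Amp\`ere equation.} Since $\rho$ and $\rho^T(\omega_0)$ are both basic representatives of $2\pi c_{1,B}(\F)$, their difference is basic-exact and of type $(1,1)$. The first step is therefore a basic $\partial\overline{\partial}$-lemma. It follows from the basic K\"ahler identities and the basic Hodge decomposition $H_B^k(M)_\C=\bigoplus H_B^{u,v}(M)$ exactly as in the K\"ahler case, and gives a basic real function $F$ with $\rho^T(\omega_0)-\rho = i\partial_B\overline{\partial}_B F$. We look for $\omega_1=\omega_0+i\partial_B\overline{\partial}_B\varphi$, with $\varphi$ basic, so that $\omega_1\in[\omega_0]_B$. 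In a foliated chart, $\rho^T(\omega_1)-\rho^T(\omega_0)=-i\partial\overline{\partial}\log(\omega_1^m/\omega_0^m)$ on the local quotient. Hence it suffices to solve
\[
(\omega_0+i\partial_B\overline{\partial}_B\varphi)^m = e^{F}\omega_0^m,\qquad \omega_0+i\partial_B\overline{\partial}_B\varphi>0 \text{ on } Q,
\]
after adding a constant to $F$ so that $\int_M e^F\omega_0^m\wedge\chi=\int_M\omega_0^m\wedge\chi$. This normalization is forced by Stokes. The key point is that for a taut foliation $\int_M d\eta\wedge\chi=0$ for basic $\eta$ of degree $2m-1$, because $d\chi$ wedged with basic forms of that degree vanishes when leaves are minimal.

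\textbf{Continuity method.} Consider the equations with $F$ replaced by $tF+c_t$ for $t\in[0,1]$, with $\varphi_t$ normalized to have integral zero. At $t=0$ the solution is $\varphi=0$.

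For openness, the linearization at a solution is the basic Laplacian of $\omega_t$ acting on basic functions. Choose a bundle-like metric with minimal leaves extending $\omega_t$. On basic functions this operator agrees with the ordinary Laplacian of that metric, which is elliptic and self-adjoint. Hence it is an isomorphism between basic $C^{k+2,\beta}$ and basic $C^{k,\beta}$ functions of mean zero, where basic H\"older spaces are closed subspaces because basicness is a closed condition. The implicit function theorem then applies within the basic category.

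For closedness we need a priori estimates. The $C^2$ estimate (Aubin--Yau) and the $C^3$ or $C^{2,\beta}$ estimate (Calabi, or Evans--Krylov) are pointwise maximum-principle computations. Since every quantity is basic, they can be carried out on local transversals where the equation is a genuine Monge--Amp\`ere equation. A maximum of a basic function on $M$ is attained along a whole leaf, so the maximum principle transfers. Schauder theory is also local on transversals. Bootstrapping then yields smooth basic solutions.

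\textbf{Main obstacle.} The step I expect to be the hardest is the $C^0$ estimate, since it is the only genuinely global one. I would follow Yau's Moser iteration. Integrate $(\omega_0^m-\omega_\varphi^m)\,|\varphi|^{p}\varphi$ against $\chi$, expand the difference as $i\partial\overline{\partial}\varphi\wedge\sum\omega_0^j\wedge\omega_\varphi^{m-1-j}$, and integrate by parts. The integration by parts is legitimate precisely because of the taut Stokes identity above. This gives a Sobolev-type bound on $|\varphi|^{p/2}\varphi$. The remaining issue is a transverse Sobolev inequality for basic functions. I would obtain it from the ordinary Sobolev inequality on $M$ for the bundle-like metric: the measure $\nu\wedge\chi$ is the Riemannian volume, and for basic $f$ we have $|df|_g=|d_Bf|_{g_Q}$. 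Combining this with the Green's-function $L^1$ bound, via the basic Laplacian, gives $\sup|\varphi|\le C$.

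\textbf{Uniqueness.} For uniqueness, suppose two solutions $\varphi_1,\varphi_2$ exist. Subtracting the equations and integrating $(\varphi_1-\varphi_2)(\omega_{\varphi_1}^m-\omega_{\varphi_2}^m)\wedge\chi$ by parts shows that $\partial_B(\varphi_1-\varphi_2)=0$. Hence $\omega_1$ is unique, as in Calabi's argument.
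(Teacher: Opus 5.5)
Your proposal is correct and follows essentially the route of the cited proof. The paper gives no argument of its own and only quotes \cite{ElKacimi}, where the transverse Calabi--Yau theorem is proved exactly this way: reduce via the basic $\partial\overline{\partial}$-lemma to a basic Monge--Amp\`ere equation, then run Yau's continuity method on basic functions, with tautness giving the integration by parts and the transversally elliptic (basic Hodge) theory giving the linear theory. The one step you state too quickly is openness: ellipticity and self-adjointness of $\Delta_g$ on $M$ do not by themselves show that the solution of $\Delta_g u = h$ with $h$ basic is again basic, and this is precisely what El Kacimi's transversally elliptic theory, which you invoke at the outset, supplies.
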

	
	In this case, we obtain that $\omega_1 = \omega_0 - d\zeta$ with $\zeta$ being a basic $1$-form. From our discussion in section \ref{DefOfVaismSection}, since $\omega_1$ is a transverse K\"ahler form we have that $\zeta \in C(M, J, \omega_0)$. Then, we can consider the associated Vaisman deformation $(J', g', \theta) \in \mathcal{V}(J, g, \theta)$ as defined in the Proposition \ref{VaismanDefProp}. Thus, from the transverse Calabi-Yau Theorem we can obtain a new Vaisman structure $(J', \omega', \theta)$ with the same Lee form $\theta$, $J'$ a deformation on the large of $J$ and with transverse K\"ahler form $\omega_1$ with transverse Ricci form $\rho$. We can finally prove the main theorem of this work.
	
	\begin{Theo}\label{MainTheorem}
		Let $(M^{2n+2}, J, \omega, \theta)$ be a Vaisman manifold with $b_1(M) = 2n+1$ and $c_{1, B}(M) = 0$. Then, $M$ is diffeomorphic to a Kodaira-Thurston manifold and $J$ is left-invariant. Moreover, the characteristic foliation $\Sigma$ is regular and the Boothby-Wang fibration of $(M, \Sigma)$ is given by the Albanese map.

	\end{Theo}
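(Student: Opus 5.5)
The plan is to reduce to Corollary \ref{MaxRankCoro} by deforming the Vaisman structure within $\mathcal{V}(J,g,\theta)$ to one with transversally Ricci-flat metric. Then we transport the conclusions back to the original structure using the facts that the Albanese map, the foliation, and (by Theorem \ref{LeftInvDef}) left-invariance of the complex structure are all insensitive to such deformations.

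\textbf{Step 1.} Since $c_{1,B}(M)=0$, we take $\rho=0\in c_{1,B}(M)$. The transverse Calabi--Yau theorem gives a transverse K\"ahler form $\omega_1=\omega_0-d\zeta$ with $\zeta$ basic and $\rho^T(\omega_1)=0$. As explained after that theorem, $\zeta\in C(M,J,\omega_0)$, so Proposition \ref{VaismanDefProp} produces a Vaisman structure $(J',g',\theta)\in\mathcal{V}(J,g,\theta)$ with the same $U,V$, the same $\theta$, and the same foliation $\Sigma$ and transverse complex structure $\J$. Its transverse K\"ahler form is $\omega_1$, so $\Ric^T=0$ for $g'$. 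By the transverse Bochner proposition, every basic harmonic $1$-form of $(M,\Sigma,g')$ is transversally parallel, hence of constant length.

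\textbf{Step 2.} The hypothesis $b_1(M)=2n+1$ is topological, so it still holds for $(J',g',\theta)$. Corollary \ref{MaxRankCoro} then applies to $(M,J',\omega',\theta)$. It shows that $M$ is diffeomorphic to a Kodaira--Thurston manifold with $J'$ the standard left-invariant complex structure, that $\Sigma$ is regular, and that the Boothby--Wang fibration is the Albanese map $\alpha'$ of $(M,J')$.

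\textbf{Step 3.} Transfer back to $J$. By Theorem \ref{LeftInvDef}(1), $J$ is a deformation in the large of $J'$ (the path $\zeta_t=t\zeta$ runs within Vaisman deformations, and reversing it starting from $J'$ gives $J$ as a Vaisman deformation of $(J',g',\theta)$). Theorem \ref{LeftInvDef}(2) then shows that $J$ is left-invariant on the Kodaira--Thurston manifold.

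Regularity is a property of $\Sigma$ alone, and $\Sigma$ is unchanged by the deformation. For the Albanese map we argue as follows. By Proposition \ref{Prop:Tsukada}, holomorphic $1$-forms are basic and hence determined by the transverse holomorphic structure $\J$, which is common to $J$ and $J'$. Concretely, a basic form $\eta$ is of type $(1,0)$ for $J$ iff it is for $J'$, because by \eqref{eqn:J} $J$ and $J'$ differ only by terms valued in $T\Sigma$, which basic forms annihilate. Holomorphicity of a closed basic $(1,0)$-form is also a transverse condition. So $\Omega^1(M,J)=\Omega^1(M,J')$, and therefore $\Alb$ and $\alpha$ coincide as maps. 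It follows that $\alpha$ has maximal rank for $J$ too, and Theorem \ref{MaximalRankTheo} applied to $(J,\omega,\theta)$ gives that the Boothby--Wang fibration is the Albanese map, with leaves biholomorphic to $T^2$.

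The main obstacle I expect is Step 3, specifically the identification $\Omega^1(M,J)=\Omega^1(M,J')$. This requires checking carefully that a closed basic form's type depends only on $\J$. Step 3 also relies on the symmetric use of Theorem \ref{LeftInvDef} to go from $J'$ back to $J$.
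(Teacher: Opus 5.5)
Your proposal is correct. Up through the left-invariance of $J$ it follows the paper's proof exactly: the transverse Calabi--Yau deformation, transverse Bochner, Theorem~\ref{Constant Length}, then Theorem~\ref{LeftInvDef} with the roles of $J$ and $J'$ swapped. The paper uses that swap implicitly, and your justification that $(J,g,\theta)\in\mathcal{V}(J',g',\theta)$ is right.

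You diverge in the Albanese/regularity part. The paper works with the original $J$ throughout that part:
\begin{enumerate}
\item it invokes Rollenske's result that the left-invariant $J$ is nilpotent;
\item it then uses the Dolbeault version of Nomizu's theorem (Cordero--Fern\'andez--Gray--Ugarte) to conclude that every holomorphic $1$-form of $(M,J)$ is left-invariant, so a basis of $\Omega^1(M)$ is pointwise independent;
\item it finishes with Lemma~\ref{AlbMaxRankLemma} and Theorem~\ref{MaximalRankTheo}.
\end{enumerate}

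You instead argue as follows. By Proposition~\ref{Prop:Tsukada} applied to both Vaisman structures, which share the foliation $\Sigma$ since $U,V$ are unchanged, $\Omega^1(M,J)$ and $\Omega^1(M,J')$ both equal the closed basic forms of type $(1,0)$. By $(*)$, $\eta\circ J'=\eta\circ J$ for any form annihilating $U$ and $V$, so the type condition is the same for both. Hence the Albanese torus and the Albanese map are literally unchanged along the deformation, and maximal rank transfers from $\alpha'$ to $\alpha$. The check you flagged as the main obstacle is therefore immediate, given that a closed $(1,0)$-form is automatically holomorphic.

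Your route is more elementary. It avoids the nilpotency and Dolbeault--Nomizu machinery, and it shows that regularity of $\Sigma$ and the identification of the Boothby--Wang fibration with the Albanese map do not depend on the left-invariance of $J$ at all. Only the left-invariance conclusion needs Rollenske's deformation theorem. The paper's route additionally records explicitly that the holomorphic $1$-forms of $(M,J)$ are left-invariant, at the cost of two external results.
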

	\begin{proof}
		By the above discussion, $M$ admits a Vaisman deformation $(J', g', \theta)$ with vanishing transverse Ricci form. In particular, the metric given by $g'$ is transverse Ricci-flat, so all basic harmonic forms have constant length by the proposition above. Furthermore, Theorem \ref{Constant Length} shows that $M$ is diffeomorphic to a Kodaira-Thurston manifold with $J'$ being left-invariant. By Theorem \ref{LeftInvDef}, $J$ is also left-invariant. 
		
		Denote by $J^*$ the left-invariant complex structure on $\R\times H_{2n +1}$ such that the projection map $(\R\times H_{2n +1}, J^*) \to (M, J)$ is holomorphic. By \cite[Proposition 3.3]{Rollenske}, $J$ is a nilpotent complex structure on the Lie algebra $\mathfrak{g} := \R\times\mathfrak{h}_{2n+1}$. Therefore, by \cite{CorderoEtAl} we have access to a Dolbeault cohomology version of Nomizu's theorem. Namely, there exists a natural isomorphism $H_{\overline{\partial}_{J^*}}^{*, *}(\g\otimes \C) \cong H_{\overline{\partial}_{J}}^{*, *}(M)$. In particular, $\Omega^1(M) = H^{1, 0}(M)$ is composed of left-invariant forms. Given a basis $\alpha^1, \dots, \alpha^n$ of $\Omega^1(M)$, since each form is left-invariant, they are linearly independent at each point of $M$. Since holomorphic $1$-forms are basic harmonic, the result follows from Lemma \ref{AlbMaxRankLemma} and Theorem \ref{MaximalRankTheo}.
	\end{proof}
	\begin{Rem}\label{Rem:MainTheo}
		From the hypothesis $c_{1,B}(M) = 0$, we can always find a Vaisman structure with basic harmonic $1$-forms of constant length for $M$. Since the Betti numbers are topological invariants, the discussion of \ref{Rem:CounterExample} applies here. In conclusion, $b_1(M) = 2n +1$ is the highest value we can impose for a Vaisman manifold with $c_{1,B}(M) = 0$, and for $0 < b_1(M) < 2n+1$ we have no conclusive characterization a priori.
	\end{Rem}

	\section{Transversely Geometrically Formal Vaisman Manifolds}
	
	In \cite{Kotschick} Kotschick introduced the notion of geometrically formality of Riemannian metrics. 
	
	\begin{Def}
		Let $(M, g)$ be a compact connected oriented manifold. Then the metric $g$ is called {\em geometrically formal} if the wedge product of harmonic forms is a harmonic form.
	\end{Def}
	
	They seem to share a lot of properties with flat Riemannian metrics and in fact every flat Riemannian metric is geometrically formal. 
	
	\begin{Prop}[Theorem 10 in \cite{sferruzza2025hermitiangeometricallyformalmanifolds}]
		Let $(M, g)$ be a compact connected oriented manifold with nonnegative curvature operator. Then $g$ is geometrically formal. In particular, every flat metric on $M$ is geometrically formal. 
	\end{Prop}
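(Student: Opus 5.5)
The plan is to reduce geometric formality to parallelism of harmonic forms, using the Bochner--Weitzenb\"ock technique. The key observation is that a parallel form is automatically harmonic: since the Levi-Civita connection is torsion-free, $d = \sum_i e^i \wedge \nabla_{e_i}$ and $\delta = -\sum_i \iota_{e_i}\nabla_{e_i}$ in a local orthonormal frame $\{e_i\}$ with dual coframe $\{e^i\}$. Moreover, $\nabla$ is a derivation with respect to $\wedge$, so the wedge product of two parallel forms is again parallel. Hence, once we show that \emph{every} harmonic form on $(M,g)$ is parallel, the wedge product of harmonic forms is parallel and therefore harmonic, which is exactly the definition of geometric formality.

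To show that harmonic forms are parallel, we use the Weitzenb\"ock formula on $p$-forms,
\[
\Delta \eta = \nabla^*\nabla \eta + \mathcal{R}_p(\eta),
\]
where $\mathcal{R}_p$ is the curvature endomorphism of $\Lambda^p T^*M$ induced by the Riemann tensor. The main step is the classical pointwise algebraic fact of Gallot--Meyer: if the curvature operator $\mathcal{R} : \Lambda^2 TM \to \Lambda^2 TM$ is nonnegative, then $\mathcal{R}_p$ is a nonnegative symmetric endomorphism for every $p$. I would prove this by diagonalizing $\mathcal{R}$ at a point, $\mathcal{R} = \sum_a \lambda_a\, \xi_a \otimes \xi_a$ with $\lambda_a \geq 0$ and $\{\xi_a\}$ an orthonormal basis of $\Lambda^2$. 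Each $\xi_a$ acts on $\Lambda^p$ as a skew-symmetric derivation $\xi_a\cdot$, and one can then express $\mathcal{R}_p = -\sum_a \lambda_a\, (\xi_a\cdot)^2$, up to a positive normalization constant. Since $-(\xi_a\cdot)^2 = (\xi_a\cdot)^*(\xi_a\cdot) \geq 0$, each term is nonnegative. This is the only genuinely computational step, and I expect it to be the main obstacle: the constants and signs in the identification of $\mathcal{R}_p$ with $-\sum_a \lambda_a (\xi_a\cdot)^2$ have to be tracked carefully. Since this is a standard result, it can alternatively be quoted.

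With this in hand, let $\eta$ be harmonic. Pairing the Weitzenb\"ock formula with $\eta$ and integrating over the compact manifold $M$ gives
\[
0 = \int_M |\nabla \eta|^2 \, dV_g + \int_M \langle \mathcal{R}_p \eta, \eta\rangle \, dV_g .
\]
Both terms are nonnegative, so $\nabla \eta = 0$. By the first paragraph, the wedge product of any two harmonic forms is parallel, hence closed and coclosed, hence harmonic, so $g$ is geometrically formal. For the last claim, a flat metric has curvature operator identically zero, which is in particular nonnegative, so every flat metric on a compact connected oriented manifold is geometrically formal.
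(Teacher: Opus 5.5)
Your proposal is correct and follows essentially the same route as the paper. Both arguments use Gallot--Meyer (a nonnegative curvature operator forces every harmonic form to be parallel), then the Leibniz rule for $\nabla$ and the expressions of $d$ and $\delta$ in terms of the Levi-Civita connection to conclude that wedge products of harmonic forms are parallel, hence harmonic. The only difference is that you sketch the Bochner--Weitzenb\"ock proof of the Gallot--Meyer step (via $\mathcal{R}_p=-\sum_a\lambda_a(\xi_a\cdot)^2$), whereas the paper simply quotes that theorem.
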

	\begin{proof}
		The proof is essentially the same as \cite[Theorem 10]{sferruzza2025hermitiangeometricallyformalmanifolds}. The authors assume $(M, g)$ to be K\"ahler, however a careful look at their proof reveals that it is not necessary. The proof boils down to use a Gallot-Meyer theorem which tells us that if the curvature operator is nonnegative, then all harmonic forms are parallel. By the Leibniz rule, the wedge product of parallel forms are parallel, and by the description of the differential and co-differential in terms of the Levi-Civita connection, every parallel form is harmonic, hence $g$ is geometrically formal.
	\end{proof}
	
	It is no surprise that this type of formality has been generalized and gained interest recently in the context of Riemannian foliation \cite{HabibTGF}.
	
	\begin{Def}
		Let $(M, \mathcal{F}, g)$ be a Riemannian foliated manifold. The metric $g$ is called {\em transversely geometrically formal} (TGF) if the wedge product of basic harmonic forms is a basic harmonic form.
	\end{Def}
	\begin{Rem}
		This definition requires the foliation to be taut for it to be well-defined. See \cite{HabibTGF} for a more general definition.
	\end{Rem}
	
	An important property that these metrics have is that every basic harmonic form have constant length (\cite[Lemma 3.4]{HabibTGF}). Therefore, every TGF Riemannian metric on $(M, \F)$ have basic harmonic $1$-forms of constant length. A priori, TGF seems so be a much restrictive condition on $g$ than having basic harmonic $1$-forms of constant length, however for Vaisman manifolds with first betti number $b_1 = \dim_\R M - 1$ they are equivalent precisely because of our classification in the previous sections. 
	 
	\begin{Prop}
		Let $(M, g, \F)$ be a taut Riemannian foliated manifold with bundle-like metric $g$. Assume further that $\F$ is regular and let $\X := M\slash \F$ be the quotient manifold. Then, $(M, \F, g)$ is TGF if and only if $(\X, g_0)$ is geometrically formal.
	\end{Prop}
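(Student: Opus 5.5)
The plan is to identify the basic complex of $(M,\F)$ with the de Rham complex of $\X$, metrics and Hodge stars included. Then basic harmonic forms on $M$ are exactly the pullbacks of $g_0$-harmonic forms on $\X$, and the statement follows because $\pi^*$ commutes with wedge products.

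\textbf{Step 1: the complexes coincide.} Let $\pi : M \to \X$ be the quotient submersion; its fibers are the leaves. A form $\eta$ on $M$ satisfies $\iota_X\eta = 0$ and $\Lie_X \eta = 0$ for all $X \in T\F$ exactly when it is locally constant along the leaves in foliated charts. Since the fibers are connected, such a form descends, and the result is a standard fact for submersions with connected fibers. Hence
$$\pi^* : \A^*(\X) \to \A_B^*(M)$$
is an isomorphism of differential graded algebras. In particular it commutes with $d$ and with $\wedge$, so $d_B$ corresponds to $d_\X$.

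\textbf{Step 2: the metrics and Hodge stars match.} Because $g$ is bundle-like, $g_Q$ is holonomy invariant and descends to a Riemannian metric $g_0$ on $\X$. By construction $\pi$ is then a Riemannian submersion from $(N, g)$ to $(T\X, g_0)$. Consequently $\pi^*$ intertwines the pointwise inner products on forms, and $\nu = \pi^* dV_{g_0}$.

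From the identity $\omega\wedge *_B\eta = g(\omega,\eta)\nu$ and uniqueness of the Hodge star, I get $*_B \pi^* = \pi^* *_{\X}$. Tautness gives $\delta_B = \pm *_B d *_B$, with the same sign convention as the Riemannian formula $\delta = \pm * d *$ in dimension $q$. Therefore $\delta_B\pi^* = \pi^*\delta_{\X}$ and $\Delta_B \pi^* = \pi^* \Delta_{g_0}$. It follows that $\pi^*$ restricts to an isomorphism $\HH^*(\X, g_0) \cong \HH_B^*(M)$.

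This is the step where care is needed. If one instead takes $\delta_B$ as the $L^2$-adjoint with respect to $(\omega,\eta)=\int_M \omega\wedge *_B\eta\wedge\chi$, fiber integration produces the weight $f(x) = \mathrm{vol}(\pi^{-1}(x))$. This $f$ could perturb the adjoint by a term of the form $\iota_{\mathrm{grad}\log f}$, which is essentially the mean curvature form. Tautness is exactly what is needed here. Either I use the formula $\delta_B = \pm *_B d *_B$ quoted in the preliminaries, or I argue that for a taut bundle-like metric the basic mean curvature vanishes, so $f$ is constant and the weighted adjoint agrees with $\delta_{\X}$.

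\textbf{Step 3: conclusion.} Suppose $(\X, g_0)$ is geometrically formal, and let $\pi^*a, \pi^*b$ be basic harmonic forms. By Step 2, $a$ and $b$ are harmonic on $\X$, so $a\wedge b$ is harmonic by geometric formality. Then $\pi^*a\wedge\pi^*b = \pi^*(a\wedge b)$ is basic harmonic, so $(M,\F,g)$ is TGF.

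Conversely, suppose $(M,\F,g)$ is TGF and $a, b$ are harmonic on $\X$. Then $\pi^*a$ and $\pi^*b$ are basic harmonic, so their wedge $\pi^*(a\wedge b)$ is basic harmonic. By Step 2, $a\wedge b$ is harmonic on $\X$, so $(\X, g_0)$ is geometrically formal.
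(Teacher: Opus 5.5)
Your proposal is correct and follows the paper's route. The paper's proof is only the observation that a form $\alpha$ on $\X$ is harmonic if and only if $\pi^*\alpha$ is basic harmonic; your Steps 1--2 justify this via $\pi^*\A^*(\X)\cong\A_B^*(M)$ and $*_B\pi^*=\pi^* *_{\X}$, and you then conclude with $\pi^*(a\wedge b)=\pi^*a\wedge\pi^*b$.

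Your remark about the fiber-volume weight $f$ is well taken, with one correction. What makes $f$ constant is the vanishing of the basic mean curvature of the given metric $g$ (e.g.\ minimal leaves, as for the totally geodesic leaves in the Vaisman case), not tautness of $\F$ alone. This is also the reading under which the paper's formula $\delta_B=\pm *_B d *_B$ holds.
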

	\begin{proof}
		This is a direct consequence of the fact that a form $\alpha$ in $\X$ is harmonic if and only if $\pi^*\alpha$ is basic harmonic.
	\end{proof}

	\begin{Prop}
		Let $(M^{2n+2}, J, \omega, \theta)$ be a Vaisman manifold with $b_1(M) = 2n+1$. Then, $(M, \Sigma, g)$ has basic $1$-forms of constant length if and only if the Vaisman metric $g$ is TGF.
	\end{Prop}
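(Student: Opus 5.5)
One direction is immediate. If $g$ is TGF, then by \cite[Lemma 3.4]{HabibTGF} every basic harmonic form has constant length; in particular every basic harmonic $1$-form does.

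For the converse, assume $(M,\Sigma,g)$ has basic harmonic $1$-forms of constant length. The plan is to reduce to the quotient and apply the flat-metric criterion.

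\textbf{Step 1 (regularity and flat quotient).} By Corollary \ref{MaxRankCoro}, $\Sigma$ is regular and the Boothby--Wang fibration is the Albanese map. Moreover, $\omega_0$, equivalently the transverse metric $g_0$, descends to a flat left-invariant K\"ahler metric on $\X = M/\Sigma \cong \Alb(M)$, a complex torus of real dimension $2n$. Concretely, $\omega_0$ is built from a global basic frame $\h_B$ that trivializes $N$ with constant inner products (Lemma \ref{PointwiseLILemma}). So $g_0$ has constant coefficients in a frame of commuting pushed-forward fields on the torus. These fields commute because $[X,Y] = \omega_0(X,Y)V$ projects to zero on $\X$.

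\textbf{Step 2 (transfer).} Vaisman foliations are taut, so the preceding proposition applies. It says that $(M,\Sigma,g)$ is TGF if and only if $(\X, g_0)$ is geometrically formal. The relevant quotient metric is exactly $g_0$, and pulling back by $\pi$ identifies harmonic forms on $\X$ with basic harmonic forms on $M$.

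\textbf{Step 3 (conclusion).} A flat metric is geometrically formal by the Proposition cited from \cite[Theorem 10]{sferruzza2025hermitiangeometricallyformalmanifolds}. Alternatively, on a flat torus the harmonic forms are exactly the constant-coefficient forms, which are closed under wedge products. Hence $g$ is TGF.

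\textbf{Main obstacle.} The delicate point is Step 1. We must be sure the metric induced on $\X$ by the \emph{original} $g$, not a deformed one, is flat. Corollary \ref{MaxRankCoro}(3) asserts this, but its justification has to be checked. Proposition \ref{LieAlgStruc} shows $\omega_0$ is constant on the frame $\h_B$. The pushforwards $\pi_*A_j$ form a global frame on $\X$ of pairwise commuting vector fields. Since $\X$ is a torus, these are invariant fields for its abelian group structure. Then $g_0$ is an invariant metric, hence flat. If that identification fails, the fallback is to use the Bochner formula on $\X$: parallelism of the orthonormal harmonic coframe (constant inner products plus $d\beta_j = 0$ and $\delta\beta_j = 0$) forces the Levi-Civita connection to vanish in that frame.
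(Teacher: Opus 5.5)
Your proposal is correct and follows the paper's proof essentially verbatim. You use \cite[Lemma 3.4]{HabibTGF} for TGF $\Rightarrow$ constant length, and for the converse you combine the regularity and flat quotient from Corollary \ref{MaxRankCoro}, geometric formality of flat metrics, and the transfer proposition for regular taut foliations. Your extra check of flatness usefully supplements the paper's one-line justification of Corollary \ref{MaxRankCoro}(3): the projected frame $\pi_*A_j$ commutes (since $[A_i,A_j]$ is tangent to $\Sigma$) and has constant inner products, so the Koszul formula gives $\nabla_{\pi_*A_i}\pi_*A_j=0$ directly. This does not require the fields to be invariant for the given group structure on $\Alb(M)$, which in general they need not be.
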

	\begin{proof}
		If $M$ has basic $1$-forms of constant length, Corollary \ref{MaxRankCoro} implies that the induced metric on $\Alb(M)$ is flat. Theorem 10 in \cite{sferruzza2025hermitiangeometricallyformalmanifolds} (or the proposition above) shows that $\Alb(M)$ is geometrically formal, which implies that $g$ is TGF.
		
		The other direction follows directly from \cite[Lemma 3.4]{HabibTGF}. 
	\end{proof}

	\section{The Mapping Torus and Vaisman Structures of LCK Rank 1}\label{TheMappingTorus}
	
	Let $(M, g)$ be a compact connected Riemannian manifold and let $\phi : M \to M$ be an isometry. Fix a real constant $a > 0$. Denote by $\Iso(M)$ the isometry group and $\Iso(M)_0$ the component of the identity. Define $\rho : \R \times M \to \R \times M $ by $\rho(t, p) := (t+a, \phi(p))$. Thus, $\rho$ induces a $\Z$ action on $\R \times M$ through iterations of $\rho$ on $M$. This action is proper, smooth and free, hence it defines a quotient manifold by 	
	$$M_{\phi, a} := \faktor{(\R \times M)}{\rho_{\phi, a}},$$
	called the {\em mapping torus} of $M$ induced by the pair $(\phi, a)$. By extending the metric $g$ to $\tilde{g} := g + dt^2$, $\rho$ is an isometry of $(\R \times M, \tilde{g})$, so $\tilde{g}$ descends to a metric $\overline{g}$ on $M_{\phi, a}$. 
	One can show that the mapping torus sits in a commutative diagram
	\[
	\begin{tikzcd}
		\R \times M \arrow[d, "p_a"'] \arrow[r, "p_2"] & \R \arrow[d, "q_a"] \\
		{M_{\phi, a}} \arrow[r, "\pi"']                 & S_a^1 := \frac{\R}{a\Z}   
	\end{tikzcd}
	\]
	where $p_a$ and $q_a$ are the quotient maps of the respective actions, $p_2$ is the projection on the second coordinate and $\pi([x, t]) := [t]$. This map is well-defined and since $dt$ on $\R$ is invariant under the action of $a\Z$, we obtain that $dt$ descends to a closed $1$-form $\theta_a$ on $S_a^1$. Furthermore, one can show that $M_{\phi, a}$ is compact and $\pi$ is a Riemannian submersion. In particular, $M_{\phi, a}$ is a fiber bundle over $S_a^1$ with $M$ fibers.
	
	\begin{Theo}[\cite{BazzoniOnTheStructureofCoKahler}]
		Let $(M, g)$ be a compact Riemannian manifold and let $\phi \in \Iso(M)$. Then, $(M_{\phi, a}, \phi)$ is isometric to $(S_a^1 \times M, \theta_a\otimes \theta_a + g)$, where $\theta$ is defines as above, if and only if $\phi \in \Iso_0(M)$.
	\end{Theo}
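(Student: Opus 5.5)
The plan is to prove both implications by passing to the universal cover in the $\R$-direction, $\R\times M$, and comparing deck transformations. Throughout I use that $\Iso(M)$ is a compact Lie group (Myers--Steenrod), so $\Iso_0(M)$ is a compact connected Lie group and its exponential map is surjective.

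\emph{($\Leftarrow$)} Suppose $\phi\in\Iso_0(M)$.
\begin{enumerate}[label=(\arabic*)]
\item By surjectivity of the exponential map, write $\phi=\psi_1$, where $\psi_s$ is the flow of a Killing field $X$ on $M$.
\item Define $F:\R\times M\to\R\times M$ by $F(t,p):=(t,\psi_{-t/a}(p))$.
\item One checks $F\circ\rho=\sigma\circ F$, where $\sigma(t,p)=(t+a,p)$. Indeed, $F(t+a,\phi(p))=(t+a,\psi_{-t/a-1}\psi_1(p))=(t+a,\psi_{-t/a}(p))$.
\item Hence $F$ descends to a diffeomorphism $M_{\phi,a}\to S^1_a\times M$ commuting with the projections to $S^1_a$.
\end{enumerate}

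The metric comparison is the step I expect to be the main obstacle. Since each $\psi_s$ is an isometry, the pullback of $dt^2+g$ under $F^{-1}$ is $dt^2+g$ corrected by mixed terms $\tfrac1a\, dt\otimes X^\flat$ and a $|X|^2dt^2/a^2$ term. In general this is not the product metric: for $M=S^1$ and $\phi$ an irrational rotation, the mapping torus is a flat torus with a skew lattice. So I would first pin down the precise sense of ``isometric'' used in \cite{BazzoniOnTheStructureofCoKahler}.

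If literal isometry is intended, I would try to modify the identification, replacing $\theta_a\otimes\theta_a+g$ by the metric $\overline{g}$ transported by $F$. That metric is still a metric on $S^1_a\times M$ for which the fibers $\{t\}\times M$ are totally geodesic, isometric to $(M,g)$, and orthogonal to a unit Killing field. This is exactly what the later sections need, since the Vaisman structure is only used through $\theta$ and the fiber metric. Alternatively, if the intended conclusion is product up to such a deformation, this construction already gives it.

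\emph{($\Rightarrow$)} Assume $\Phi:M_{\phi,a}\to S^1_a\times M$ is an isometry. The product carries the parallel unit field $\partial_t$. Pull it back by $\Phi$ and lift to $\R\times M$; this gives a parallel field $W$ on $\R\times M$ invariant under $\rho$.
\begin{enumerate}[label=(\arabic*)]
\item By the de Rham decomposition of the Riemannian product $\R\times M$ with $M$ compact, $W=c\,\partial_t+Y$, where $c$ is a constant and $Y$ is a parallel, hence Killing, field on $M$.
\item Because $W$ has closed orbits of length equal to that of $S^1_a$ in $M_{\phi,a}$, flowing along $W$ for the appropriate time sends $\{0\}\times M$ to $\{a\}\times M$. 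This flow is a composition of translation by $a$ with $\exp(sY)\in\Iso_0(M)$.
\item On the other hand, the holonomy around the circle must agree with the deck map $\rho$. Comparing the two gives $\phi\circ\exp(sY)^{-1}$ equal to a deck transformation of the fibre, which is trivial on $M$.
\item Therefore $\phi=\exp(sY)\in\Iso_0(M)$.
\end{enumerate}

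The delicate point in this direction is matching the flow time and the orbit-closing condition so that exactly one traverse corresponds to $\rho$. I would handle it by normalising $|W|=1$ and using that $\pi$ is a Riemannian submersion with $\theta_a(W)$ constant.
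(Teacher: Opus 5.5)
The paper gives no proof of this statement; it only cites \cite{BazzoniOnTheStructureofCoKahler}. Your diagnosis of the ``if'' direction is right: as printed, with an isometry onto the product metric $\theta_a\otimes\theta_a+g$, it is false. A clean instance of your example is $M=\R/\Z$, $a=1$, and $\phi$ the rotation $x\mapsto x+\tfrac13$, which lies in $\Iso_0(M)$. Then $M_{\phi,1}=\R^2/\langle(0,1),(1,\tfrac13)\rangle$ has exactly two lattice vectors of minimal length $1$, while $S^1_1\times M=\R^2/\Z^2$ has four, so the two are not isometric. Hence no proof of the literal statement can exist.

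Your map $F(t,p)=(t,\psi_{-t/a}(p))$ correctly proves the true version. It is a diffeomorphism $M_{\phi,a}\to S^1_a\times M$ commuting with the projections to $S^1_a$, so it matches $\theta_a$ with the form induced by $dt$, and it carries $\overline{g}$ to the twisted metric you describe. This is all that is used later, in Theorem \ref{Theo: theta is left-inv}, where only $u^*\theta=\theta_b$ and $\Pi^*\theta_b=dt$ matter. So your remark about the downstream use is accurate.

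Your ``only if'' argument, on the other hand, has a genuine gap. Since $T:=\Phi^*\partial_t$ is a unit parallel field with $\mathrm{Fl}^T_a=\Id$ on $M_{\phi,a}$, the lift satisfies $\mathrm{Fl}^W_a=\rho^k$ for some fixed $k\in\Z$. That is, $(t+ca,\mathrm{Fl}^Y_a(p))=(t+ka,\phi^k(p))$, so $c=k$, and $c^2+|Y|^2=1$ leaves only $c\in\{0,\pm1\}$. Your step (3) tacitly assumes that the time-$a/c$ flow of $T$ is the identity, which is only justified once one knows $c=\pm1$. In that case you get $Y=0$ and $\phi=\Id$.

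The case $c=0$ is not handled at all. There $W=Y$ is tangent to the fibres, and its flow never carries $\{0\}\times M$ to $\{a\}\times M$. All one learns is that $M$ has a $\phi$-invariant unit parallel field with closed orbits of length $a$, which says nothing about which component of $\Iso(M)$ contains $\phi$. This case really occurs, precisely when $\Phi$ does not respect the fibration over $S^1_a$. For example, take $M=S^1_a\times N$, $\phi=\Id$, and $\Phi$ the map swapping the two circle factors of $S^1_a\times S^1_a\times N$. So $c=0$ cannot be dismissed, and your proposal offers no argument for it.
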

	
	Since $\Iso(M)$ is a compact finite dimensional Lie group, one can show that given any $\phi \in \Iso(M)$ there exists an $m > 0$ such that $\phi^m \in \Iso_0(M)$. Then, by taking $f := \phi^m$ and $b := ma$ we obtain a locally isometric finite covering $u : M_{f, b} \to M_{\phi, a}$ with $M_{f,b}$ isometric to $(\R\slash b\Z) \times M $ (see \cite[Remark 1]{Bazzoni2}).

	\begin{Theo}\label{Theo: theta is left-inv}
		Let $M = \Gamma \backslash H_{2n+1}\times \R$ be a Kodaira-Thurston manifold with $\pi : H_{2n+1}\times \R \to M$ being the quotient map. Denote by $t : \R \times H_{2n+1} \to \R$ the projection in the first component and let $(J, \omega, \theta)$ be a Vaisman structure on $M$ with LCK rank $1$. Then, $\pi^*\theta = dt$. In particular, $\theta$ is a left-invariant form.

	\end{Theo}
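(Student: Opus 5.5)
The plan is to first pin down the cohomology class of $\theta$ using the LCK rank hypothesis, and then upgrade the cohomological statement to a pointwise one.

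\textbf{Step 1 (cohomology).} By Nomizu's theorem, $H^1(M,\R)\cong H^1(\g)$ for $\g=\R\oplus\h_{2n+1}$. A basis is given by the left-invariant classes $[dt],[dx_1],[dy_1],\dots,[dx_n],[dy_n]$; $dz$ is not closed, since $dz$ restricted to $\h_{2n+1}$ is, up to sign, $\sum_i dx_i\wedge dy_i$. So we may write
\[
[\theta]=c\,[dt]+[\beta],\qquad \beta\in\Span_\R\{dx_i,dy_i\}.
\]
The goal of this step is $\beta=0$ and $c\neq 0$.

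\textbf{Step 2 (LCK rank $1$ gives a Sasakian fibration).} LCK rank $1$ means the period group $\varphi([\theta])(H_1(M))$ is cyclic, so $\theta=\lambda\,df$ for some smooth $f:M\to S^1$. Since $\theta$ is parallel and $|\theta|=1$, $f$ is a submersion, and the flow of $U$ exhibits $M$ as a mapping torus $S_{\phi,a}$. Here the fibre $S$ is compact, carries the induced Sasakian structure (the standard fact for Vaisman manifolds of LCK rank $1$), and $\phi$ is a Sasakian isometry. On fundamental groups this gives an exact sequence $1\to\pi_1(S)\to\Gamma\to\Z\to 1$, where $\Gamma\to\Z$ is integration of $\theta/\lambda$. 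Hence $\pi_1(S)=K:=\ker(\Gamma\to\Z)$. By Malcev rigidity, $S$ is homotopy equivalent (indeed diffeomorphic) to the nilmanifold $K\backslash N_K$. Here $N_K$ is the connected subgroup of $\R\times H_{2n+1}$ with Lie algebra $\mathfrak k=\ker(c\,dt+\beta)\subset\g$.

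\textbf{Step 3 (the key obstruction).} Suppose $\beta\neq0$. Then $\mathfrak k$ is a codimension-one ideal that does not contain a complement of $\h_{2n+1}$ in the expected way. A direct computation should show $\mathfrak k\cong \h_{2n-1}\oplus\R^2$: it is $2$-step nilpotent with $1$-dimensional derived algebra, but its centre is $3$-dimensional rather than $1$-dimensional. I then invoke the known result that a compact nilmanifold admitting a Sasakian structure must be a quotient of a Heisenberg group (Cappelletti-Montano–De Nicola–Marrero–Yudin). This rules out $\beta\neq0$. Moreover $c\neq0$, since $\theta$ is not exact. This is the step I expect to be the main obstacle. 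In particular I must check that the Sasakian structure on $S$ genuinely lives on a nilmanifold with fundamental group $K$; this is the role of the mapping torus description combined with Malcev rigidity.

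\textbf{Step 4 (from cohomology to forms).} We now have $\theta=c\,dt+dh$ for some smooth $h$. Rescaling the $t$-coordinate of the Kodaira–Thurston presentation, which changes the lattice $\Gamma$ accordingly, gives $c=1$. To remove $dh$, use the diffeomorphism $\Psi$ of $\R\times H_{2n+1}$ given by $(t,g)\mapsto(t+h(\pi(t,g)),g)$. Since $\pi$ is $\Gamma$-invariant and $t$ is central, $\Psi$ is $\Gamma$-equivariant. It is a diffeomorphism because the $t$-derivative of $t+h\circ\pi$ equals $\pi^*\theta(\partial_t)=\theta(U)$ up to the normalization of $t$, which is nowhere zero. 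In the new presentation $\pi^*\theta=dt$, which is left-invariant. If the paper intends the original presentation, I would instead argue that $h$ is constant, for instance by showing $dt$ is harmonic for $g$ by comparing with the Kashiwada decomposition (Proposition \ref{Prop:Kashiwada}). That route is less clear, so I would first try the reparametrization argument.
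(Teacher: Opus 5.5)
Your Step 3 is where the argument breaks, and its target $\beta=0$ is in fact false. If $c\neq 0$, then $\mathfrak k=\ker(c\,dt+\beta)$ meets the central line $\R T$ only in $0$. The projection $\g=\R T\oplus\h_{2n+1}\to\h_{2n+1}$ along $T$ is a Lie algebra homomorphism, since $T$ is central, so it restricts to an isomorphism $\mathfrak k\cong\h_{2n+1}$ for \emph{every} $\beta$. The algebra $\h_{2n-1}\oplus\R^2$ only arises when $c=0$. So the Sasakian rigidity of the fibre yields exactly $c\neq 0$ and says nothing about $\beta$. Your remark that $c\neq0$ because $\theta$ is not exact is a non sequitur: non-exactness only gives $(c,\beta)\neq(0,0)$, and it is the Heisenberg obstruction that rules out $c=0$.

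To see that $\beta\neq0$ genuinely occurs, take $M=(\Z\times\Gamma_H)\backslash(\R\times H_{2n+1})$ with $\Gamma_H$ the integral Heisenberg matrices. The map $\Phi(t,h)=(t+x_1(h),h)$ is a Lie group automorphism preserving this lattice, because $x_1$ is a homomorphism and the $\R$-factor is central. Pulling back the standard Vaisman structure by the induced diffeomorphism gives a left-invariant Vaisman structure of LCK rank $1$ with $\pi^*\theta=dt+dx_1$. Pulling the standard structure back instead by the time-one flow of $\kappa T$, with $\kappa\ge0$ a bump function, adds a non-constant exact term. So for a \emph{fixed} presentation, $\pi^*\theta$ need not even be left-invariant. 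Your fallback of proving $h$ constant therefore cannot succeed; the statement only holds after passing to a presentation adapted to $\theta$.

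Once $c\neq 0$ is known, the correct cohomological reduction is such a change of presentation, not $\beta=0$. If $\ell:H_{2n+1}\to\R$ is the homomorphism with $d\ell=\beta$, then $(t,h)\mapsto(ct+\ell(h),h)$ is an automorphism of $\R\times H_{2n+1}$. It carries $\Gamma$ to another lattice and pulls $dt$ back to $c\,dt+\beta$.

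Step 4 still has a gap. The quantity $\partial_t\bigl(t+h\circ\pi\bigr)=(\pi^*\theta)(\partial_t)$ is $\theta$ evaluated on the left-invariant field $T$, not on the Lee field $U$, and it can vanish. For instance, pull back by a diffeomorphism isotopic to the identity whose differential at some point sends $T$ into $\ker\theta$. So $\Psi$ need not be a diffeomorphism. Straightening has to be done along $U$, where $\theta(U)=1$.

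That is exactly the mapping-torus picture the paper uses from the outset. It writes $M\cong S_{\phi,a}$ with $\theta$ corresponding to $ds$, and passes to the finite cover $S^1_b\times S$. It identifies $S$ with a Heisenberg nilmanifold $\Lambda\backslash H_{2n+1}$ via the rigidity theorem for compact aspherical Sasakian manifolds with nilpotent fundamental group. It then builds $\Pi:\R\times H_{2n+1}\to S^1_b\times S$ so that $\Pi^*u^*\theta=dt$ holds by construction. The identification $u\circ\Pi=\pi$ is where the Kodaira--Thurston presentation is taken to be the one produced by $\theta$. That route needs neither Nomizu's theorem nor a Moser-type step.

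Note also that Malcev rigidity only gives a homotopy equivalence $S\simeq K\backslash N_K$, not a diffeomorphism. You should therefore use the fundamental-group form of that rigidity theorem rather than the version stated for nilmanifolds.
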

	\begin{proof}
		 Denote by $\pi : \tilde{M} \to M$ the universal covering of $M$ and pullback the metric $g$ to $\tilde{g} := \pi^*g$. By the structure theorem of Vaisman manifolds (\cite{OVStructure, OV1}), there exists a compact connected Sasakian manifold $S$ such that $M$ is isometric to a mapping torus $S_{\phi, a}$. This isometry arises by lifting the parallel Lee vector field $U$ to a parallel vector field $\tilde{U}$ on $\tilde{M}$ through $p$. The vector $\tilde{U}$ then yields a splitting $\tilde{M} \cong \R \times S$, where $\R$ is parametrized by the flow of $\tilde{U}$. The splitting induces the metric decomposition $\tilde{g} = g_S + ds^2$ with $ds = \pi^*\theta$. Thus, there exists a $m > 0$ and $b = ma$ such that $(S_{f, b}, \overline{g})$ is isometric to $(S_b^1 \times S, \theta_b\otimes \theta_b + g_S)$ by the above discussion. 
		 
		 Denote by $u : S_{f, b} \to S_{\phi, a}$ the induced locally isometric finite covering. Since $M$ has the structure of a Kodaira-Thurston manifold, the long exact sequence of homotopy groups shows that $S$ is a compact aspherical Sasakian manifold with nilpotent fundamental group $\Lambda := \pi_1(S)$. By \cite{Yudin2}, $\Lambda$ can be identified with a lattice in $H_{2n+1}$ in such a way that there exists a diffeomorphism $\psi : S \to \Lambda \backslash H_{2n+1}$. We can define a diffeomorphism $\Pi : \R \times H_{2n+1} \to S_b^1 \times S $ by $\Pi := q_b \times \psi$. By construction $\Pi^*\theta_b = dt$ and $u^*\theta = \theta_b$. Thus, $\Pi^*u^*\theta$ is precisely $dt$ on $\R \times H_{2n+1}$, which is left-invariant. Since $\Gamma':= b\Z \times \Lambda$ is a sublattice of $\Gamma$ by \cite[Lemma 3.1]{Gomes}, the following diagram commutes. 
		 \[
		 \begin{tikzcd}
		 	\R\times H_{2n+1} \arrow[rd, "\Pi"] \arrow[dd, "\pi"'] &                               \\
		 	& S_b^1\times S \arrow[ld, "u"] \\
		 	{M \cong S_{f, a}}                                     &                              
		 \end{tikzcd}
		 \]
		 Therefore, we obtain that $u \circ \Pi = \pi$, hence $ds = \pi^*\theta = \Pi^*u^*\theta = dt$, implying that $\theta$ is left-invariant.
	\end{proof}
	
	\begin{Theo}
		Let $(M^{2n+2}, J, \omega, \theta)$ be a Vaisman manifold with LCK rank $1$, $b_1(M) = 2n+1$ and $c_{1,B}(M) = 0$. Then, $M$ is a Kodaira-Thurston manifold and $(J, \omega, \theta)$ is a left-invariant Vaisman structure on $M$. In addition, the characteristic foliation $\Sigma$ is regular with the Albanese map being the Boothby-Wang fibration of $M$ over $\Alb (M)$.

	\end{Theo}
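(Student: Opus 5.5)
By Theorem \ref{MainTheorem}, $M$ is diffeomorphic to a Kodaira-Thurston manifold $\Gamma\backslash(\R\times H_{2n+1})$ and $J$ is left-invariant. That theorem also gives that $\Sigma$ is regular and that the Albanese map is the Boothby-Wang fibration. What remains is to show that $\theta$ and $\omega$ are left-invariant. Once both are left-invariant, $g=\omega(\bigcdot,J\bigcdot)$ is left-invariant as well, since $J$ is.

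\textbf{Step 1: $\theta$.} Since the LCK rank is $1$, Theorem \ref{Theo: theta is left-inv} applies directly. It gives $\pi^*\theta=dt$, so $\theta$ is left-invariant. Because $J$ is also left-invariant, $\theta^c=J\theta$ is left-invariant too.

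\textbf{Step 2: $\omega$.} We use the formula $\omega=-d\theta^c+\theta\wedge\theta^c$ from \cite{VaismanGHopfMfd}. The form $\theta^c$ is left-invariant by Step 1, and the exterior derivative of a left-invariant form is left-invariant, since left translations commute with $d$. Hence $d\theta^c$ is left-invariant. The wedge product $\theta\wedge\theta^c$ is left-invariant as well. It follows that $\omega$ is left-invariant, and therefore so is $g$.

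\textbf{Main obstacle.} The delicate point is making sure the identifications are compatible. Theorem \ref{Theo: theta is left-inv} proves left-invariance with respect to a presentation $M\cong\Gamma\backslash(\R\times H_{2n+1})$ that comes from the Vaisman structure theorem, namely the mapping torus of a Sasakian nilmanifold. The left-invariance of $J$, on the other hand, comes from Theorem \ref{MainTheorem} via Rollenske, with respect to the diffeomorphism produced in Theorem \ref{Constant Length}. These two diffeomorphisms may differ.

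To reconcile them, I would fix the identification of the universal cover with $\R\times H_{2n+1}$ through $\pi_1(M)\cong\Gamma$. Any two such identifications differ, up to isotopy, by an automorphism of the nilpotent Lie group, by Mal'cev rigidity for nilmanifolds. Automorphisms of the Lie group preserve left-invariance, so left-invariance of both $\theta$ and $J$ can be expressed in the same presentation. One also has to check that Rollenske's notion of left-invariance is insensitive to this isotopy. I would try to get this from the fact that his result concerns the complex structure up to diffeomorphism isotopic to the identity, pulled back through the fixed presentation.
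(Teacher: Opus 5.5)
Your Steps 1 and 2 are exactly the paper's proof. The paper cites Theorem \ref{MainTheorem} for $J$ and the preceding theorem for $\pi^*\theta=dt$, then reads off $\omega=-dJ\theta+\theta\wedge J\theta$; it never compares the two presentations. You are right that this is where the argument breaks, but your repair does not work. Mal'cev rigidity identifies two presentations only up to an automorphism of $\R\times H_{2n+1}$ composed with a diffeomorphism homotopic to the identity. Left-invariance is not preserved by pulling back along such a diffeomorphism, isotopic or not: on $T^2$, $\psi(x,y)=(x+\epsilon\sin 2\pi y,\,y)$ is isotopic to the identity, yet $\psi^*dx$ is not invariant. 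Likewise, Rollenske's theorem at best identifies $(M,J)$, via some diffeomorphism, with a nilmanifold carrying a left-invariant complex structure, and says nothing about what that map does to $\theta$. The ``insensitivity to isotopy'' you would need is false.

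In fact no repair is possible, because the left-invariance conclusion itself fails. Start from the standard structure on $(\Z\times\Lambda)\backslash(\R\times H_{2n+1})$. Let $\varphi$ be a nonconstant basic function with small $C^2$-norm, i.e.\ the pullback of some $f$ on the Albanese torus $T^{2n}$, and put $\zeta:=J\,d\varphi$. Then $d\zeta$ is a basic real $(1,1)$-form with $\omega_0-d\zeta>0$. Proposition \ref{VaismanDefProp} therefore gives a Vaisman structure $(J_\varphi,\omega_\varphi,\theta)$ with the same $\theta$ (so LCK rank $1$), $b_1=2n+1$, and the same transverse holomorphic structure (so $c_{1,B}=0$). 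Its transverse K\"ahler metric is the flat metric of $T^{2n}$ plus a nonzero multiple of $i\partial\overline{\partial}f$. Suppose this structure were left-invariant for some presentation. Then left translations would act transitively on the universal cover preserving $U$, $V$ and the transverse metric, so the transverse scalar curvature would be constant. A constant scalar curvature K\"ahler metric on a torus has harmonic, hence (since $c_1=0$) vanishing, Ricci form. By Calabi's uniqueness the only Ricci-flat metric in $[\omega_0]$ is the flat one, which forces $f$ to be constant, a contradiction. So $J_\varphi$ and $\theta$ are each left-invariant for a suitable presentation (via Rollenske, resp.\ the product presentation), but never for the same one. What the paper's argument does support is weaker: the transverse Ricci-flat deformation $(J',g',\theta)$ from the proof of Theorem \ref{MainTheorem} is left-invariant, since $J'$, $\theta$ and hence $\omega'$ have constant coefficients in the frame $\{U,V\}\cup\h_B$ of Proposition \ref{LieAlgStruc}. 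This does not transfer back to $(J,\omega,\theta)$.
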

	\begin{proof}
	
		By Theorem \ref{MainTheorem} it only remains to show that $(\omega, \theta)$ is left-invariant. By the above theorem, $\theta$ is left-invariant. Since $M$ is Vaisman, the fundamental form can be written as $\omega = -dJ\theta + \theta\wedge J\theta$, which is an expression given by left-invariant forms.
	
	\end{proof}

	\printbibliography
\end{document}